\documentclass[12pt]{amsart}
\pdfoutput=1
\usepackage{amsmath, amssymb, amsthm, amsbsy, bbm, amsfonts, color, verbatim, array, floatflt, yfonts}

\usepackage[margin=1.2in]{geometry}
\usepackage[all]{xy}
\usepackage[english]{babel} 
\usepackage[shortlabels]{enumitem}
\usepackage{ stmaryrd } 

\usepackage{graphicx, color, overpic}
\usepackage[usenames,dvipsnames,svgnames,table]{xcolor}
\usepackage[pdfpagelabels]{hyperref}
\usepackage{cleveref}

\usepackage{multirow}
\usepackage{easybmat}

\usepackage{thmtools}
\usepackage{thm-restate} 

\setlength{\headheight}{15pt}
\setcounter{tocdepth}{2} 

\hyphenation{nonemptiness}
 
\theoremstyle{plain}
\newtheorem{thm}{Theorem}[section]
\newtheorem*{thm*}{Theorem}
\newtheorem{prop}[thm]{Proposition}
\newtheorem*{prop*}{Proposition}
\newtheorem{lemma}[thm]{Lemma}
\newtheorem*{lemma*}{Lemma}
\newtheorem{corollary}[thm]{Corollary}

\theoremstyle{definition}
\newtheorem{definition}[thm]{Definition}
\newtheorem{example}[thm]{Example}

\newtheorem{notation}[thm]{Notation}

\theoremstyle{remark}
\newtheorem{remark}[thm]{Remark}

\newcommand{\cF}{\mathcal{F}}
\newcommand{\cU}{\mathcal{U}}

\newcommand{\Z}{\mathbb{Z}}

\newcommand{\Q}{\mathbb{Q}}
\newcommand{\F}{\mathbb{F}}
\newcommand{\define}{\mathrel{\mathop:}=}

\newcommand{\cO}{\mathcal O}
\newcommand{\cP}{\mathcal P}

\newcommand{\id}{\mathrm{1}} 
\newcommand{\App}{\mathcal{A}} 
\newcommand{\sW}{W_0} 
\newcommand{\aW}{{W}} 
\newcommand{\sS}{S_0} 
\newcommand{\aS}{{S}} 
\newcommand{\Cf}{\mathcal{{C}}_{f}} 
\newcommand{\fa}{{\bf{a}}} 
\newcommand{\Cw}{\mathcal{{C}}} 
\newcommand{\aH}{\mathcal{H}} 

\newcommand{\Labl}{{\mathcal{L}}}
\newcommand{\LablF}{\Labl\cF} 



\newcommand{\G}{\Gamma} 
\newcommand{\LG}{\operatorname{L}\!\Gamma}


\newcommand{\type}{\mathrm{type}} 



 

\newcommand{\x}{\mathbf{x}}
\newcommand{\y}{\mathbf{y}}
\newcommand{\z}{\mathbf{z}}

\newcommand{\kk}{\mathbbm{k}}

\newcommand{\SL}{\operatorname{SL}}

\newcommand{\Sh}{\operatorname{Sh}} 
\renewcommand{\star}{\operatorname{star}} 
\newcommand{\gate}{\operatorname{gate}} 

\numberwithin{equation}{subsection}

\definecolor{amethyst}{rgb}{0.6, 0.4, 0.8}
\definecolor{kellygreen}{rgb}{0.3, 0.73, 0.09}
\definecolor{americanrose}{rgb}{1.0, 0.01, 0.24}

\begin{document}

\hypersetup{pdfauthor={Milicevic, Schwer, Thomas},pdftitle={Put title here}}

\title[Chimney retractions encode orbits in affine flags]{Chimney retractions in affine buildings encode orbits in affine flag varieties}

\author{Elizabeth Mili\'{c}evi\'{c}}
\address{Elizabeth Mili\'{c}evi\'{c}, Department of Mathematics \& Statistics, Haverford College, 370 Lancaster Avenue, Haverford, PA, USA}
\email{emilicevic@haverford.edu}

\author{Petra Schwer}
\address{Petra Schwer, Department of Mathematics, Universitätsplatz 2, Otto-von-Guericke University of Magdeburg, Germany}
\email{petra.schwer@ovgu.de}

\author{Anne Thomas}
\address{Anne Thomas, School of Mathematics \& Statistics, Carslaw Building F07,  University of Sydney NSW 2006, Australia}
\email{anne.thomas@sydney.edu.au}

\thanks{The first author was partially supported by NSF Grant DMS \#2202017. The second author was partially supported by DFG Grant SCH 1550/5-1.  This research was partly supported by ARC Grant DP180102437.}

\subjclass[2010]{Primary 20E42; Secondary 05E45, 14M15, 20G25, 51E24.}

\begin{abstract}
This paper determines the relationship between the geometry of retractions and the combinatorics of folded galleries for arbitrary affine buildings, and so provides a unified framework to study orbits in affine flag varieties. We introduce the notion of labeled folded galleries for any affine building $X$ and use these to describe the preimages of chimney retractions.   When $X$ is the building for a group with an affine Tits system, such as the Bruhat--Tits building for a group over a local field, we can then relate labeled folded galleries and shadows to double coset intersections in affine flag varieties.  This result generalizes the authors' previous joint work with Naqvi on groups over function fields.
\end{abstract}

\maketitle

\begin{center}
	\emph{This paper is dedicated to the memory of Jacques Tits, for building \\ fundamental bridges between geometry and group theory.}
\end{center}


\bigskip


\section{Introduction}\label{sec:intro}

Jacques Tits introduced buildings as a unifying framework for studying algebraic groups over arbitrary fields.  In Tits' original construction of spherical buildings, a building is a simplicial complex associated to and acted upon by a semisimple algebraic group.  Bruhat and Tits later constructed affine buildings, for algebraic groups over local fields.  Many features of these spherical and affine buildings correspond to delicate algebraic structures in the associated group.  However, as established by Tits and others, not all buildings arise from a group.  In this paper, we use the rich geometry and combinatorics of arbitrary affine buildings to capture the relationships between retractions (mappings of the entire building onto a single apartment) and minimal galleries (shortest paths between alcoves in a given apartment).  We then use these relationships to give unified proofs of purely group-theoretic statements concerning flag varieties in all situations where there is an associated affine building.



\subsection{Chimney retractions and folded galleries in affine buildings}\label{sec:chimney-intro}

We work in the setting of an \emph{affine building} $X$, in which each apartment is a copy of the Coxeter complex for an irreducible affine Coxeter system of type $(\aW, \aS)$ and rank $n$. The maximal simplices of $X$ are \emph{alcoves}, and in any fixed apartment, the alcoves are in bijection with the elements of $\aW$. \emph{Retractions} from $X$ to a fixed apartment are simplicial maps which permit one to study the entire building by focusing on a single apartment.

All of the well-known examples of retractions in affine buildings can be simultaneously generalized as retractions from \emph{chimneys}. Introduced by Rousseau~\cite{Rousseau77,Rousseau01}, chimneys are related to the generalized sectors defined by Caprace and L\'ecureux \cite{CapraceLecureux}, who extended to arbitrary buildings work of Guivarc'h and Remy on Bruhat--Tits buildings~\cite{GuivarchRemy}. Since we focus on affine buildings, a chimney in this paper is parameterized by an element $y \in \aW$ together with a choice of a spherical standard parabolic subgroup of $\aW$, equivalently an indexing set $J \subseteq [n] = \{ 1,\dots,n\}$. Following the authors' joint work with Naqvi~\cite{MNST}, our $(J,y)$-\emph{sectors} are then a special case of Rousseau's chimneys.

\begin{figure}[h]
	\begin{minipage}[b]{0.45\linewidth}
		\centering
		\includegraphics[width=0.9\textwidth]{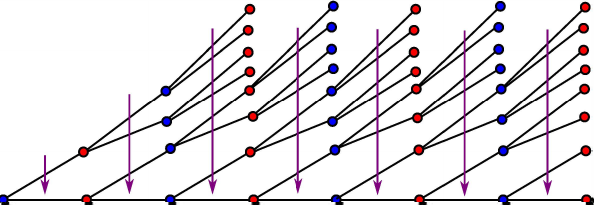}
	\end{minipage}
	\hspace{0.5cm}
	\begin{minipage}[b]{0.45\linewidth}
		\centering
		\includegraphics[width=0.9\textwidth]{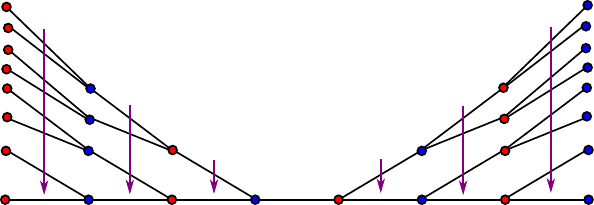}
	\end{minipage}
	\caption{Retractions of a tree from a chamber at infinity (left), and from an alcove (right).}
	\label{fig:treeRetractionsIntro}
\end{figure}

A \emph{chimney retraction} folds the entire building $X$ away from the $(J,y)$-chimney onto a single apartment.  In the extreme cases where $J$ is either empty or the whole set $[n]$, the chimney retraction coincides with the familiar retraction from either a chamber at infinity or the base alcove, respectively.  These are depicted in Figure \ref{fig:treeRetractionsIntro} for the case where $X$ is a tree.  
If $J$ is both nonempty and proper, in which case $X$ must be at least two-dimensional, the geometry of the associated chimney retractions interpolates between the geometry of these two classical retractions, as illustrated by Figure \ref{fig:chimneyRetraction}; see Examples \ref{ex:2Dfigure} and~\ref{ex:2Dfigure-retract} for a detailed discussion of this figure. In this more general situation, chimney retractions were first described by G\"ortz, Haines, Kottwitz, and Reuman \cite{GHKRadlvs}, and also appeared in a subsequent paper of Haines, Kapovich, and Millson \cite{HKM}.  In~\cite{MNST}, we develop a more precise framework for chimney retractions in the setting of an arbitrary affine building.  In \cite{MST2}, the authors apply this machinery to answer questions about certain affine Deligne--Lusztig varieties, which were the original motivation for the discussion of chimney retractions in \cite{GHKRadlvs}.

  \begin{figure}[h]
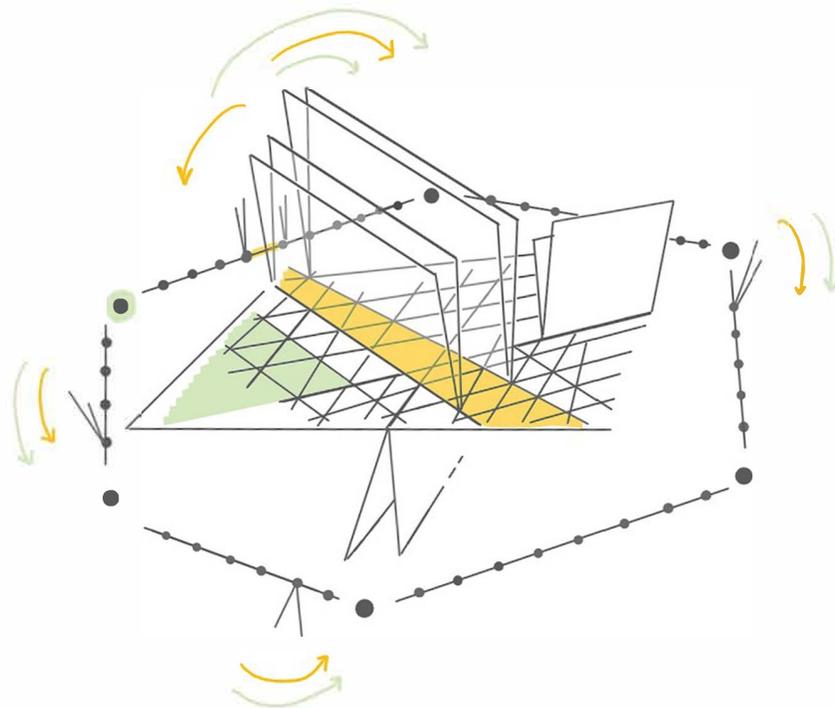

		\resizebox{0.75\textwidth}{!}
		{\begin{overpic}{chimneyRetraction-color}
			\end{overpic}}
		\caption{Two types of chimney retractions in a two-dimensional affine building.}	
		\label{fig:chimneyRetraction}	
\end{figure}

Any chimney in an affine building induces an \emph{orientation} on the apartments in $X$, as defined by Graeber and the second author \cite{GraeberSchwer}, such that each crossing and fold in any combinatorial \emph{gallery} inherits an assignment as positive or negative.  \emph{Positively folded} galleries were introduced by Gaussent and Littelmann \cite{GaussentLittelmann} and generalized by Ram \cite{Ram}, with both~\cite{GaussentLittelmann} and~\cite{Ram} working in the context of representations of complex semisimple algebraic groups. \emph{Shadows}, which were introduced in \cite{GraeberSchwer} and generalized in \cite{MNST}, provide a convenient combinatorics for studying a related collection of galleries which are positively folded with respect to a given \emph{chimney orientation}.  Figure~\ref{fig:shadows}, which is discussed in Example \ref{ex:A2shadow}, illustrates a family of shadows; see also the survey on shadows by the second author \cite{SchwerShadows}.

\begin{figure}[ht]
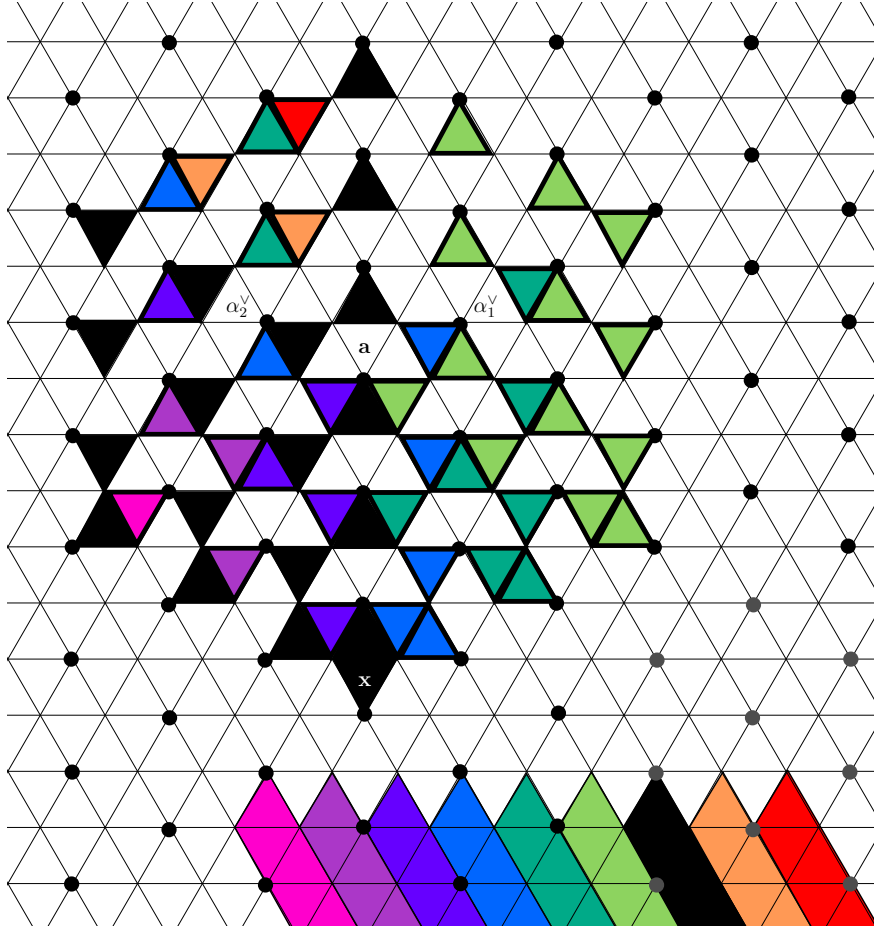

	\begin{center}
		\resizebox{0.75\textwidth}{!}
		{
			\begin{overpic}{ShadowsA2}
			\put(38,62){$\fa$}
			\put(38,26){\color{white}{$\x$}}
			\put(50.5,66.5){$\alpha_1^\vee$}
			\put(23.7,66.5){$\alpha_2^\vee$}
			\end{overpic}
		}
		\caption{Some shadows in type $\tilde{A}_2$, for chimneys with $J = \{ 1 \}$ and various $y \in \aW$; e.g.~the black alcoves form the shadow of $\x = t^{-3(\alpha_1^\vee + \alpha_2^\vee)} \fa$ with respect to the $(J,\id)$-chimney represented by the black sector.}
		\label{fig:shadows}
	\end{center}
\end{figure}

\emph{Labeled} folded galleries were introduced by Parkinson, Ram, and Schwer (using the terminology ``labeled folded alcove walks") in order to describe points in certain orbits in the affine flag variety \cite{PRS}.  The authors employ this combinatorics to study families of affine Deligne--Lusztig varieties in \cite{MST1}.  In \cite{MNST}, the authors generalize the labeled folded galleries of \cite{PRS} to the setting of being positively folded with respect to any chimney orientation, rather than the special case which induces a periodic orientation. However, the mechanics of the labeling as defined in \cite{PRS,MNST} are only valid for those Bruhat--Tits buildings which arise from groups over function fields, as opposed to arbitrary local fields.  The primary goal of this paper is to develop labeled folded galleries, and purely geometric methods for working with them, in any affine building, and so fully generalize the main algebraic results of \cite{MNST}.


\subsection{Main results}\label{sec:theorems}

In this section, we state three of the main theorems contained in this paper. The first result places preimages of chimney retractions in any affine building in bijection with labeled folded galleries.  The second two theorems then apply this geometric framework to the algebraic context of orbits in affine flag varieties and affine Grassmannians, which are captured as shadows of chimney retractions.

As above, let $X$ be any affine building, of type $(\aW,\aS)$ irreducible affine of rank $n$.  We will assume $X$ is thick and panel-regular, as defined in Section~\ref{sec:buildings-prelim} (these assumptions hold for all Bruhat--Tits buildings, for instance).  Fix a standard apartment $\App$ in $X$ and base alcove $\fa$ in $\App$. Denote the alcove in $\App$ corresponding to the element $x \in W$ using  boldface $\x = x\fa$. Any minimal gallery from the base alcove $\fa$ to $\x$ corresponds to a reduced expression $x = s_{j_1}\cdots s_{j_l}$, where $s_{j_i} \in S$, and such a gallery has type $\vec{x} = (j_1, \dots, j_l)$. Given any $J \subseteq [n]$ and $y \in \aW$, denote by $r_{J,y}: X \to \App$ the chimney retraction onto $\App$ which folds all simplices in $X$ away from the $(J,y)$-chimney. In the special case $J = [n]$ and $y=1$, the chimney retraction is centered at the base alcove, and we write $r: X\to \App$. A gallery in $\App$ is positively folded if all folds face away from the $(J,y)$-chimney.  The shadow $\Sh_{J,y}(\x)$ is the set of final alcoves of all galleries of type $\vec{x}$ which start at $\fa$ and are positively folded with respect to the $(J,y)$-chimney. 

In this setting, we introduce the notion of a $(J,y)$-labeled folded gallery.  This consists of a gallery which is positively folded with respect to the $(J,y)$-chimney, together with ``labels" on certain of its crossings and folds; see Section~\ref{sec:LFGs} for the precise definition.  In Remark~\ref{rem:comparisonLF}, we discuss how our formulation relates to that of the labeled folded galleries already in the literature.

The main geometric theorem of this paper is then as follows.

\begin{restatable}{thm}{chimneyLFGsRestate} 
\label{thm:chimneyLFGs} Let $X$ be a thick, panel-regular affine building, of type $(W,S)$ irreducible affine of rank $n$.
Let $J \subseteq [n]$ and $x,y,z \in \aW$.  Then 
 the set \[r^{-1}(\x) \cap r_{J,y}^{-1}(\z)\] is in bijection with the set of $(J,y)$-labeled folded galleries which are of type $\vec{x}$ and have final alcove $\z$.
\end{restatable}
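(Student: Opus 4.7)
The plan is to construct explicit maps in both directions between $r^{-1}(\x) \cap r_{J,y}^{-1}(\z)$ and the set of $(J,y)$-labeled folded galleries of type $\vec{x}$ ending at $\z$, and then to verify that these maps are mutual inverses by a step-by-step analysis along the gallery.

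For the forward direction, fix $\w \in r^{-1}(\x) \cap r_{J,y}^{-1}(\z)$ and a reduced expression for $x$ realizing the type $\vec{x}$. Since any apartment of $X$ containing $\fa$ and $\w$ is carried isomorphically onto $\App$ by the alcove retraction $r$, and minimal galleries of a fixed type between two alcoves in a building lie in any common apartment, the minimal gallery of type $\vec{x}$ from $\fa$ to $\x$ in $\App$ lifts to a unique minimal gallery $\gamma_\w$ from $\fa$ to $\w$ in $X$. Applying the chimney retraction $r_{J,y}$ step by step to $\gamma_\w$ then produces a gallery in $\App$ of type $\vec{x}$ starting at $\fa$ and ending at $r_{J,y}(\w) = \z$. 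At each step, $r_{J,y}$ either preserves the crossing or introduces a fold, and by the defining property of the orientation induced by the $(J,y)$-chimney every such fold faces away from the chimney. To record enough information to reconstruct $\gamma_\w$ from its image, one attaches a label to each crossing and fold, indexed by a canonical set parameterizing the alcoves sharing the relevant panel; the thickness and panel-regularity hypotheses guarantee that this label set is well-defined and intrinsic to $X$, independent of any algebraic realization.

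For the reverse direction, given a $(J,y)$-labeled folded gallery $\delta$ of type $\vec{x}$ in $\App$ ending at $\z$, we inductively build a minimal gallery $\gamma$ in $X$ starting at $\fa$: at each step, the type dictates the type of panel to cross, and the label (or its absence for a trivial crossing preserved by $r_{J,y}$) specifies the neighbor in $X$ to which we move. Since $\gamma$ is a minimal gallery of type $\vec{x}$ starting at $\fa$, its final alcove $\w$ retracts under $r$ to the unique alcove at the end of a minimal gallery of type $\vec{x}$ from $\fa$ in $\App$, namely $\x$. By construction $r_{J,y}(\gamma) = \delta$, so $r_{J,y}(\w) = \z$, placing $\w$ in the desired intersection. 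The uniqueness of the minimal lift in the forward direction and the canonical indexing of labels in the reverse direction ensure that lifting-then-retracting and retracting-then-lifting each return the original object.

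The main obstacle is the setup of the labeling for arbitrary affine buildings rather than only Bruhat--Tits buildings of groups over function fields. The previous treatments in \cite{PRS,MNST} leveraged an algebraic parameterization of panels coming from the underlying field; here one must instead describe the set of neighbors of an alcove across a panel entirely through the simplicial geometry of $X$ and the combinatorics of the chimney orientation. This requires a careful analysis, facilitated by the panel-regularity hypothesis, of how $r_{J,y}$ acts locally on each panel type relative to the positive and negative sides determined by the $(J,y)$-chimney, which in turn determines for each step of the gallery whether a label is required and which finite set parameterizes its possible values. Once these labeling conventions are in place, the bijection follows in a formal way from the local uniqueness of minimal galleries of a fixed type.
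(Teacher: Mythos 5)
Your proposal is correct and follows essentially the same route as the paper: the paper packages your ``fold-and-label'' map as $\Labl\cF_{J,y}$ (built from the gate-based description of how $r_{J,y}$ acts on minimal galleries in Proposition~\ref{prop:retractMinimal}) and your ``unfold'' map as Proposition~\ref{prop:unfold}, then restricts the resulting bijection $\G_\fa \to \LG^+_{J,y}$ to galleries of type $\vec{x}$ whose final alcove retracts to $\z$. One small caveat: the labeling sets need not be finite, since the paper does not assume the panel-regular cardinalities $N_i$ are finite or even countable.
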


Now let $G$ be any group which admits an affine building $X$ of type $(\aW,\aS)$. Let~$I$ denote the stabilizer of $\fa$ in $G$, often called the Iwahori subgroup. The group $G$ admits a disjoint decomposition into double cosets of the form $IxI$ where $x \in \aW$. Let $P$ be a standard spherical parabolic subgroup of $G$, which corresponds to a subset $J \subseteq [n]$.  
The group $P=P_J$ admits a Levi decomposition $P = LU$, where $L$ is the Levi component and $U$ its unipotent radical. For any $P$, define a subgroup $I_P$ of $G$ by  $I_P = (I \cap L)U$. 
Given any $y \in \aW$, we denote by $(I_P)^y$ the conjugate $y I_P y^{-1}$.

Our next result simultaneously generalizes Theorems 1.2 and 5.11 from \cite{MNST}, by moving beyond the case of function fields, as well as recording the statement in terms of shadows.

\begin{thm}\label{thm:DoubleCosetsI-intro}  
Let $x, y, z \in \aW$, and let $P=P_J$ be a standard spherical parabolic subgroup of $G$.  Then there is a bijection between the points of the intersection 
	\[  I x I  \cap (I_P)^y z I \] 
	and the set of $(J,y)$-labeled folded galleries which are of type $\vec{x}$ and have final alcove $\z$. Moreover, 
	 \[  I x I  \cap (I_P)^y z I  \neq \emptyset \] if and only if the alcove $\z$ lies in the shadow $\Sh_{J,y}(\x)$. 
\end{thm}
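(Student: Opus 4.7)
The plan is to split the statement into the bijection and the nonemptiness criterion, and to assemble the bijection by combining Theorem~\ref{thm:chimneyLFGs} with two standard retraction-orbit identifications that hold whenever $G$ acts on $X$ via an affine Tits system.  Under the correspondence between alcoves and $G/I$ given by $\x = x\fa \leftrightarrow xI$, the right $I$-cosets in $IxI$ are precisely the alcoves in the $I$-orbit of $\x$.  The first identification to establish is that $r^{-1}(\x)$ coincides with this $I$-orbit: indeed, $I$ fixes $\fa$ and acts by simplicial isomorphisms, so it preserves retraction fibers; the disjoint BN-pair decomposition $G = \bigsqcup_{w \in \aW} IwI$ then forces $\x$ to be the unique alcove of $\App$ in the $I$-orbit of $\x$, so $r^{-1}(\x)$ is exactly the $I$-orbit of $\x$.

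The second and more substantive identification is that $r_{J,y}^{-1}(\z)$ corresponds to the right $I$-cosets contained in $(I_P)^y zI$. The key point is that the $(J,y)$-chimney is the image of the $(J,\id)$-chimney under $y$, and the stabilizer of the $(J,\id)$-chimney in the sense relevant to the chimney retraction is $I_P = (I\cap L)U$: the Levi part $I \cap L$ stabilizes the bounded base data of the chimney, while the unipotent radical $U$ stabilizes the direction at infinity. Conjugating by $y$ then gives $(I_P)^y$ playing the analogous role for the $(J,y)$-chimney, so the fibers of $r_{J,y}$ agree with the $(I_P)^y$-orbits on alcoves. With both identifications in hand, the set $r^{-1}(\x) \cap r_{J,y}^{-1}(\z)$ corresponds to the right $I$-cosets in $IxI \cap (I_P)^y zI$, and Theorem~\ref{thm:chimneyLFGs} then supplies the bijection with $(J,y)$-labeled folded galleries of type $\vec{x}$ with final alcove $\z$.

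For the nonemptiness claim, one direction is immediate from the definition of the shadow: the underlying unlabeled gallery of any $(J,y)$-labeled folded gallery of type $\vec{x}$ ending at $\z$ is a positively folded gallery from $\fa$ to $\z$, so $\z \in \Sh_{J,y}(\x)$. The reverse direction requires noting that every such positively folded gallery admits at least one valid labeling, hence lifts to a labeled folded gallery. The main obstacle will be the chimney-retraction/orbit identification $r_{J,y}^{-1}(\z) \leftrightarrow (I_P)^y zI/I$ in full generality. For function-field groups this is essentially done in \cite{MNST}, but in the general affine Tits system setting one must carefully verify that $(I_P)^y$ acts transitively on $r_{J,y}^{-1}(\z)$, likely by induction along a minimal gallery from $\z$ into the $(J,y)$-chimney and a root-subgroup analysis of how $I \cap L$ and $U$ absorb folds and crossings on either side.
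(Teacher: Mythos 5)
Your overall architecture is the same as the paper's: identify $r^{-1}(\x)$ with $IxI$ and $r_{J,y}^{-1}(\z)$ with $(I_P)^y z I$, feed both into Theorem~\ref{thm:chimneyLFGs}, and deduce nonemptiness from the existence of labelings (the paper instead routes nonemptiness through Proposition~\ref{prop:shadowsRetractions}, i.e.\ $\Sh_{J,y}(\x) = r_{J,y}\circ r^{-1}(\x)$, but your version is fine given thickness, which guarantees $|\Lambda_{p_j}\setminus\{\Labl_{p_j}(\gate_{J,y}(p_j))\}|\geq N_i - 1\geq 1$ at every fold). The first identification is classical and the paper simply cites it.

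The genuine gap is in how you propose to establish $r_{J,y}^{-1}(\z) = (I_P)^y z I$, which you correctly flag as the main obstacle. Your plan --- prove transitivity of $(I_P)^y$ on the fiber ``by induction along a minimal gallery \ldots and a root-subgroup analysis of how $I\cap L$ and $U$ absorb folds and crossings'' --- is precisely the technique that the paper argues (Remark~\ref{rem:comparisonLF}) is unavailable beyond the function-field case: for a general affine Tits system (e.g.\ $\SL_n(\Q_p)$, or buildings with no one-parameter root subgroups at all) the quotients $\overline{U}_{\alpha,j}$ need not lift to subgroups, so there is no root-group bookkeeping to induct with. The paper's actual argument avoids this entirely: it uses the purely formal decomposition $G = \bigsqcup_{z\in\aW} I_P z I$ (Lemma~\ref{lem:Bruhat}, which transfers verbatim from \cite{GHKRadlvs} to any affine Tits system), then shows geometrically that every $g\in (I_P)^y$ fixes pointwise some $(J,y)$-sector (Lemma~\ref{lem:fixAlcoves} through Corollary~\ref{cor:fixSector}, with the Levi part fixing the whole $J$-corridor and the unipotent part fixing a sufficiently deep sector), and concludes $r_{J,y}(g\z)=\z$ by a minimal-gallery, type-preservation argument; the disjointness of the $I_P z I$ then yields both containments at once, with no transitivity induction needed. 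So your heuristic about $I\cap L$ stabilizing the ``base'' and $U$ the ``direction at infinity'' is the right intuition, but to make it a proof in this generality you should replace the root-subgroup induction with the sector-fixing lemmas plus the $I_P$-Bruhat decomposition.
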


\noindent For example, in Figure~\ref{fig:shadows}, where  $\x = t^{-3(\alpha_1^\vee + \alpha_2^\vee)}\fa$ is the black alcove labeled in white, the intersection $IxI \cap I_P z I$ is nonempty if and only if $\z$ is one of the black alcoves.

We also extend Theorem \ref{thm:DoubleCosetsI-intro} to arbitrary parahoric subgroups of $G$; see Theorem~\ref{thm:DoubleCosetsPar}, which generalizes Theorems 5.14 and 5.16 from \cite{MNST}. We highlight an important special case of this extension as Theorem \ref{thm:DoubleCosetsK-intro} below, for which we require some additional terminology.

For any face $\sigma$ of the base alcove $\fa$ which contains the origin $v_0$, denote by $K(\sigma)$ the parahoric subgroup of $G$ which is the stabilizer in $G$ of $\sigma$.  For example, $I = K(\fa)$ in Theorem \ref{thm:DoubleCosetsI-intro}.  Write $K = K(v_0)$.  Denote by $t^\lambda$ the translation by the element~$\lambda$ of the coroot lattice $R^\vee$, where $\lambda$ is dominant if it lies in the Weyl chamber containing $\fa$.  For any $\lambda \in R^\vee$, denote by $\star(\lambda)$ the set of alcoves of $\App$ which contain the vertex $\lambda$.  Given any $J \subseteq [n]$ and $y \in \aW$, the shadow $\Sh_{J,y}(\lambda)$ of the vertex $\lambda \in R^\vee$ is the set of final vertices of all galleries of type $\vec{\lambda}$ which start at $v_0$ and are positively folded with respect to the $(J,y)$-chimney.

 The following result generalizes both Theorem 1.3 and Corollary 5.18 from \cite{MNST}. See also  Theorem \ref{thm:DoubleCosetsPar}, which is a generalization of both Theorems \ref{thm:DoubleCosetsI-intro} and \ref{thm:DoubleCosetsK-intro} where $I$ and $K$, respectively, are replaced by arbitrary parahoric subgroups of $G$ which stabilize faces of the base alcove containing the origin.
	
\begin{thm}\label{thm:DoubleCosetsK-intro}
Let $P=P_J$ be a standard spherical parabolic subgroup of $G$. Let $\lambda$ and~$\mu$ be in the coroot lattice with $\lambda$ dominant, and let $x_\lambda\fa$ be the alcove in $\star(\lambda)$ closest to $\fa$. For any $y \in \aW$, there is a bijection between the points of the intersection 
	\[Kt^\lambda K \cap (I_P)^y t^\mu K\] 
and the union over $w \in \sW$ of the set of $(J,y)$-labeled folded galleries which are of type $\overrightarrow{wx_\lambda}$ and have final alcove in $\star(\mu)$. Moreover, 
	\[Kt^\lambda K \cap (I_P)^y t^\mu K \neq \emptyset\]
	if and only if the vertex $\mu$ lies in the shadow $\Sh_{J,y}(\lambda)$.
\end{thm}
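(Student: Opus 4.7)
The plan is to reduce Theorem \ref{thm:DoubleCosetsK-intro} to the Iwahori-level Theorem \ref{thm:DoubleCosetsI-intro} by decomposing the parahoric cosets appearing in the intersection into unions of Iwahori-level cosets indexed by the finite Weyl group $\sW$. Since $K=K(v_0)$ stabilizes the origin, the Bruhat decomposition gives $K = \bigsqcup_{w \in \sW} IwI$, and correspondingly $K\fa = \star(v_0)$. Using this, for $\lambda$ dominant with $x_\lambda \fa$ the closest alcove in $\star(\lambda)$ to $\fa$, one obtains
\[
K t^\lambda K \;=\; \bigcup_{w \in \sW} I(wx_\lambda)I
\qquad \text{and} \qquad
(I_P)^y t^\mu K \;=\; \bigcup_{\z \in \star(\mu)} (I_P)^y z I,
\]
where in the second expression we choose a representative $z \in \aW$ for each alcove $\z = z\fa \in \star(\mu)$. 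Intersecting yields
\[
K t^\lambda K \cap (I_P)^y t^\mu K \;=\; \bigcup_{w \in \sW} \bigcup_{\z \in \star(\mu)} \bigl( I(wx_\lambda)I \cap (I_P)^y zI \bigr).
\]

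Next, I would apply Theorem \ref{thm:DoubleCosetsI-intro} to each piece with $x = wx_\lambda$, which produces a bijection between $I(wx_\lambda)I \cap (I_P)^y zI$ and the set of $(J,y)$-labeled folded galleries of type $\overrightarrow{wx_\lambda}$ with final alcove $\z$. Taking the union over $w \in \sW$ and $\z \in \star(\mu)$ then matches the bijection claimed in the statement. For the nonemptiness characterization, Theorem \ref{thm:DoubleCosetsI-intro} tells us that each summand is nonempty if and only if $\z \in \Sh_{J,y}(wx_\lambda \fa)$, so the full intersection is nonempty if and only if there exist $w \in \sW$ and $\z \in \star(\mu)$ with $\z \in \Sh_{J,y}(wx_\lambda \fa)$. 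To match this against the vertex-shadow condition $\mu \in \Sh_{J,y}(\lambda)$, I would use the compatibility between positively folded vertex-galleries of type $\vec\lambda$ from $v_0$ to $\mu$ and positively folded alcove-galleries of type $\overrightarrow{wx_\lambda}$ ending in $\star(\mu)$: any such alcove-gallery projects to a vertex-gallery of the former kind, while any vertex-gallery lifts to an alcove-gallery of some type $\overrightarrow{wx_\lambda}$ for an appropriate $w \in \sW$.

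The main obstacle is twofold. First, care is required in the indexing of the decomposition of $Kt^\lambda K$ when $\lambda$ is non-regular, since in that case the stabilizer of $\lambda$ in $\sW$ introduces multiplicities among the elements $\{wx_\lambda : w \in \sW\}$, and one must verify that the set-theoretic union over $w \in \sW$ on the group side genuinely corresponds to the union over $w$ of gallery sets on the combinatorial side (and that the Iwahori-level bijections glue consistently across overlapping pieces). Second, the vertex-to-alcove shadow equivalence requires a careful gallery-lifting and projection argument based on the shadow machinery developed earlier in the paper, in order to translate between the alcove-level nonemptiness criterion output by Theorem \ref{thm:DoubleCosetsI-intro} and the vertex-level shadow $\Sh_{J,y}(\lambda)$ appearing in the statement.
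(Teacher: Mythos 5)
Your overall strategy for the bijection---reduce to the Iwahori level and apply Theorem~\ref{thm:DoubleCosetsI-intro} piecewise---is the same one the paper ultimately relies on (via Proposition~\ref{prop:DoubleCosetsPar-body} and Corollary~\ref{cor:DoubleCosetsK-body}, whose proofs are deferred to the analogous arguments in \cite{MNST}), but your key decomposition is false and the error propagates. Since $K=\bigsqcup_{w\in\sW}IwI$, one has
\[
Kt^\lambda K \;=\; \bigsqcup_{u\in\sW t^\lambda\sW}IuI \;=\; \bigcup_{v,w\in\sW}I\,t^{v\lambda}w\,I,
\]
which is exactly what is used in the proof of Proposition~\ref{prop:inverse_classical}(2); by contrast $\bigcup_{w\in\sW}I(wx_\lambda)I$ accounts only for the single left coset $\sW x_\lambda$ and misses every $I(wx_\lambda v)I$ with $v\neq 1$. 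This is not the non-regularity issue you flag (the elements $wx_\lambda$, $w\in\sW$, are pairwise distinct in any case); it occurs already for $\lambda$ regular. Consequently your displayed formula for the intersection omits points of $G$, and the count only comes out right once one observes that ``points'' of $Kt^\lambda K\cap(I_P)^y t^\mu K$ are cosets in $G/K$ (vertices), not in $G/I$ (alcoves). The actual content of Corollary~\ref{cor:DoubleCosetsK-body} is that each such $K$-coset contains a \emph{unique} alcove whose image under $r$ lies in $\{wx_\lambda\fa : w\in\sW\}$; this uses that $x_\lambda$ is the minimal length representative of $t^\lambda\sW$ (the $(W_\sigma,W_\tau)$-reducedness hypothesis of Proposition~\ref{prop:DoubleCosetsPar-body}) together with a gate/closest-alcove argument in the residues $\star(g v_0)$. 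Without that selection step the union over $w\in\sW$ of gallery sets does not match the intersection, so this is a genuine gap rather than bookkeeping.

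For the nonemptiness criterion the paper also takes a shorter route than the one you sketch: instead of translating the alcove-level criterion $\z\in\Sh_{J,y}(wx_\lambda\fa)$ into the vertex statement by lifting and projecting galleries, it invokes Theorem~\ref{thm:nonempty}(2), which combines $r^{-1}(\sW\cdot\lambda)=Kt^\lambda K$, $r_{J,y}^{-1}(\mu)=(I_P)^y t^\mu K$, and $\Sh_{J,y}(\lambda)=r_{J,y}\circ r^{-1}(\sW\cdot\lambda)$ to get $r_{J,y}(Kt^\lambda K)=\Sh_{J,y}(\lambda)$ directly, with no gallery comparison at all. Your route for this half could likely be completed, but the vertex/alcove shadow compatibility you defer is not automatic and is precisely what the retraction formalism is designed to circumvent.
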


\noindent We refer the reader to Figures 1, 6, and 7 in \cite{MNST} for illustrations of Theorem~\ref{thm:DoubleCosetsK-intro} in types $\tilde{A}_2$ and $\tilde{C}_2$.  See  the discussion in Section 1.3 of \cite{MNST} for a history of results from the literature which are directly related to Theorems \ref{thm:DoubleCosetsI-intro} and/or \ref{thm:DoubleCosetsK-intro}.


\subsection{Discussion of related and further work}\label{sec:related}

We now discuss some specific open problems and applications which are related to the results of this paper.  

For $\lambda$ any coroot, the shadow $\Sh_{\emptyset,1}(\lambda)$ occurring in a special case of the statement of Theorem \ref{thm:DoubleCosetsK-intro}  coincides with the coroot lattice points which lie in the $\lambda$-Weyl polytope, as shown by the second author for all affine buildings \cite{Hitzel}.  Beyond this extreme case, however, far less is known about the geometry of shadows.  Recursive descriptions are obtained in \cite{GraeberSchwer, MNST}, but closed formulas appear somewhat elusive, as more general shadows are no longer convex, frequently giving rise to what we refer to as ``holes" and ``notches" in~\cite{MNST}.  As Figure \ref{fig:shadows} suggests (see also the many figures in \cite{MNST}), one might hope that chimney  shadows could be expressed using unions of certain convex regions depending on the chimney, perhaps even using unions of pseudo-Weyl polytopes in the sense of \cite{AndersonKogan, Kamnitzer}.

 Earlier work of the authors on affine Deligne--Lusztig varieties relies directly upon the algebraic statements in both \cite{PRS} and \cite{MNST}, relating labeled folded galleries to double coset intersections in affine flag varieties over function fields.  As such, these previous applications only address a certain family of arithmetic problems, even though the techniques in both \cite{MST1} and \cite{MST2} exclusively involve folded galleries in affine buildings. This paper thus provides the remaining ingredient for generalizing these earlier results to the context of any group admitting an affine Tits system. In particular, the nonemptiness statements and dimension formulas for affine Deligne--Lusztig varieties in \cite{MST1,MST2} now immediately extend from groups with values in the function field $\overline{\F}_q((t))$ to any other local field, such as the $p$-adic numbers $\Q_p$. 
 
 Some of our main theorems are algebraic in nature, and thus only directly apply to affine buildings associated to groups, such as Bruhat--Tits buildings.  However, the majority of  techniques make sense in a broader geometric context (e.g.~Theorem~\ref{thm:chimneyLFGs} and all material in Sections \ref{sec:preliminaries} through \ref{sec:retractions}), and it would be interesting to develop further applications to arbitrary buildings. For example, Abramenko, Parkinson, and Van Maldeghem work in any locally finite, regular building to count the number of points in intersections of spheres for the Weyl-distance \cite{APvM}, in a manner which can be translated into an appropriate analog of Theorem \ref{thm:DoubleCosetsPar}, and which has many applications as discussed in~\cite{APvM}.
 
Note that we provide only a sample of related questions in this section.  Since chimney retractions are so geometrically natural, we expect there to be many algebraic, combinatorial, and geometric  applications beyond those discussed here.


\subsection{Organization of the paper} 

We now provide a brief overview of the contents of the paper. All background material is contained in Sections \ref{sec:preliminaries} and \ref{sec:Chimneys}. Section~\ref{sec:preliminaries} includes all preliminary terminology and notation on affine Coxeter systems and affine buildings, largely following \cite{AB}.  Section \ref{sec:Chimneys} then reviews from \cite{MNST} the definitions of chimneys, retractions from chimneys, orientations induced by chimney retractions, and shadows of chimney retractions. 

All new material is contained in Sections \ref{sec:retractions} and \ref{sec:chimney-shadows}. We are able to generalize the corresponding theorem statements from \cite{MNST} to the setting of arbitrary affine buildings by combining results from \cite{MNST} for arbitrary affine buildings, which we restate in Section~\ref{sec:Chimneys}, with the material we develop in Section \ref{sec:retractions}, and results from~\cite{GHKRadlvs} which we formalize and generalize in Section~\ref{sec:chimney-shadows}.  More precisely, in Section \ref{sec:retractions} we define the notion of a labeled folded gallery in the context of any thick, panel-regular affine building, and then prove Theorem \ref{thm:chimneyLFGs}, which places preimages of chimney retractions in bijection with these upgraded labeled folded galleries.  In Section~\ref{sec:chimney-shadows}, we identify preimages of chimney retractions with certain double coset intersections, and so connect shadows of chimney retractions to intersections of certain orbits in any group which admits an affine Tits system.  The paper then concludes with the proofs of Theorems \ref{thm:DoubleCosetsI-intro} and \ref{thm:DoubleCosetsK-intro}, which are obtained by combining these earlier results.


\subsection*{Acknowledgements}

EM gratefully acknowledges the financial support of Haverford College for sponsoring a visit by AT in June 2022, during which part of this work was completed. The authors are indebted to the anonymous referee of \cite{MNST}, for an exceptionally careful reading which prompted the authors to appreciate that decisively new geometric techniques would be required to generalize the proofs of the algebraic statements from \cite{MNST} beyond the case of algebraic groups over function fields.


\section{Affine Coxeter systems and buildings}\label{sec:preliminaries}

This section reviews preliminary definitions and notation on affine Coxeter systems and buildings, assuming familiarity with references such as \cite{AB, Humphreys, Ronan}. 
We begin in Section \ref{sec:Coxeter-prelim} with some background and notation for affine Coxeter systems. Definitions of related geometric structures such as roots and alcoves are found in Sections \ref{sec:roots-prelim} and \ref{sec:alcoves-prelim}, respectively. To reduce notational complexity, we assume throughout the paper that the affine Coxeter system is irreducible.  All geometric results (for example, all material in Sections \ref{sec:preliminaries} through \ref{sec:retractions}) can be extended to reducible affine Coxeter systems, and we leave this natural extension as an exercise for the reader.

The language required for working with galleries in arbitrary affine buildings is then reviewed in Section \ref{sec:buildings-prelim}. We also relate this building-theoretic framework to the more specific context of groups with affine Tits systems, though this additional algebraic structure is not required until Section \ref{sec:chimney-shadows}, and so we postpone that discussion until later in the paper.


\subsection{Affine Coxeter systems}\label{sec:Coxeter-prelim}

Let $(\aW,\aS)$ be an irreducible affine Coxeter system of rank $n$, and let $V$ be the associated $n$-dimensional real vector space on which $\aW$ acts, which we can identify with $n$-dimensional Euclidean space.   If we denote the origin of $V$ by $v_0$, then $(\aW,\aS)$ has associated spherical Coxeter system $(\sW,\sS)$ such that $\sW$ is the stabilizer in $\aW$ of $v_0$ and $\sS = \{ s_1, \dots, s_n \}$ is the set of elements of $\aS = \{ s_0,s_1, \dots, s_n\}$ which fix $v_0$.  Denote the index set for $\sS$ by $[n]:= \{1,2,\dots, n\}$, and the index set for $\aS$ by $[\overline{n}] := {0} \cup [n]$.  We write $\ell:\aW \to \Z$ for the length function of the Coxeter system $(\aW,\aS)$, and denote the longest element of the spherical Coxeter group $\sW$ by $w_0$. We typically use the letters $x, y, z$ for elements of $\aW$ and $u, v, w$ for elements of $\sW$.  

The vector space $V$ admits an ordered basis which we denote by $\Delta = (\alpha_i)_{i \in [n]}$, and a symmetric bilinear form $B(\alpha_i, \alpha_j) = -\cos \frac{\pi}{m(i,j)}$, where $m(i,j)$ is the $(i,j)$-entry of the associated Coxeter matrix.  Given any $i \in [n]$, define $\alpha_i^\vee \in V^*$ by $\langle \alpha_i^\vee, v \rangle := 2B(\alpha_i, v)$ for any $v \in V$, where $\langle \cdot, \cdot \rangle: V^* \times V \to \Z$ is the evaluation pairing. The ordered set $\Delta^\vee = (\alpha_i^\vee)_{i \in [n]}$ is then a basis for $V^*$. The action of $\aW$ on $V$ induces an action of $\aW$ on $V^*$, and every generator $s_i \in \sS$ acts as a linear reflection fixing the hyperplane $H_{\alpha_i} = \{ x \in V^* \mid \langle x, \alpha_i \rangle = 0\}$. We sometimes also write $s_i = s_{\alpha_i}$ for $i \in [n]$.


\subsection{Roots and coroots}\label{sec:roots-prelim}

The vectors $\Phi = \{w\alpha_i \mid w \in \sW, \ i \in [n]\} \subset V$ are called \emph{roots}, and the subset of \emph{positive roots} is $\Phi^+ = \{ \alpha \in \Phi \mid \langle x, \alpha \rangle \geq 0\ \text{for all}\ x \in \Cf\}$. The elements of $\Delta$ are called the \emph{simple roots}, and the elements of $\Delta^\vee$ are the \emph{simple coroots}. There is a partial ordering on the set of roots, given by $\alpha \geq \beta$ if and only if  $\alpha - \beta$ is a nonnegative linear combination of simple roots.  There is a unique maximal element in $\Phi$ with respect to this partial ordering, called the \emph{highest root}, typically denoted by $\widetilde\alpha$.

Given any subset $J \subseteq [n]$, denote by $W_J$ the standard parabolic subgroup of $\sW$ generated by $S_J = \{ s_j \mid j \in J \}$, and recall that $(W_J, S_J)$ is a spherical Coxeter system.  (Note that in the spherical context, we omit the 0-subscript in $(\sW)_J$ to avoid double indices.) We will write $\Phi_J$ for the sub-root system of $\Phi$ associated to $W_J$, which has positive roots $\Phi_J^+ = \Phi^+ \cap \Phi_J$ and basis of simple roots $\Delta_J = (\alpha_j)_ { j \in J }$.  In particular, if $J = \emptyset$, then $W_J$ is trivial and $\Phi_J = \Delta_J= \emptyset$, while if $J = [n]$ then $W_J = \sW$, $\Phi_J = \Phi$, and $\Delta_J = \Delta$.
The set of simple coroots corresponding to the standard parabolic subgroup $W_J$ is denoted by $\Delta^\vee_J$.

There is a distinguished lattice in $V^*$ which is preserved by the actions of $\aW$ and $\sW$, called the coroot lattice $R^\vee = \bigoplus \Z \alpha_i^\vee$. The partial ordering on $\Phi$ extends naturally to $R^\vee$. Denote by $t^\lambda$ the translation in $V^*$ by the coroot $\lambda \in R^\vee$. The set of all such translations $\{ t^\lambda \mid \lambda \in R^\vee\}$ is a subgroup of the affine Coxeter group $\aW$.  For any $\lambda, \mu \in R^\vee$, we have $t^\lambda t^\mu = t^{\lambda + \mu} = t^{\mu + \lambda} = t^\mu t^\lambda$, and given any $w \in \sW$, we have $w t^\lambda w^{-1} = t^{w\lambda}$.  Any element $x \in \aW$ can be expressed uniquely as $x = t^\lambda w$ for some $\lambda \in R^\vee$ and $w \in \sW$, and moreover $\aW \cong R^\vee \rtimes \sW$.


\subsection{Chambers, alcoves, and half-spaces}\label{sec:alcoves-prelim}

The simplicial cone defined by $\Cf = \{ x \in V^* \mid \langle x, \alpha_i \rangle \geq 0\ \text{for all}\ i \in [n]\}$ is called the \emph{fundamental} or \emph{dominant chamber}.  The simplicial cones $w\Cf \subset V^*$ for $w \in \sW$ are the \emph{chambers}. As such, the set of chambers is in bijection with the elements of $\sW$, and for $w \in \sW$ we thus typically write $\Cw_w$ for the chamber $w\Cf$.  The chamber $\Cw_{w_0} = w_0 \Cf$  is called the \emph{opposite} or \emph{antidominant chamber}, and consists of all points $x \in V^*$ such that $\langle x, \alpha_i \rangle \leq 0$ for all $i \in [n]$. 

Given any root $\alpha \in \Phi$, we write $s_{\alpha} \in \sW$ for the linear reflection on $V^*$ fixing  the hyperplane $H_{\alpha} = \{ x \in V^* \mid \langle x, \alpha \rangle = 0\}$ pointwise.  All reflections in $\sW$ are of this form. The set of \emph{walls} is then denoted by $\aH_0 =\{ H_\alpha \mid \alpha \in \Phi\} = \{  wH_{\alpha_i} \mid w \in \sW, \ i \in [n] \} $, and consists of the collection of hyperplanes in $V^*$ fixed by some reflection $s_\alpha \in \sW$.   A chamber is then also the closure of a maximal connected component of $V^* \setminus \aH_0$.

For each root $\alpha \in \Phi$ and integer $k \in \Z$, we write $H_{\alpha,k}$ for the affine hyperplane or \emph{wall} of $V^*$ given by $H_{\alpha,k} = \{ x \in V^* \mid \langle x, \alpha \rangle = k \}$. Write $s_{\alpha,k}$ for the affine reflection fixing $H_{\alpha,k}$ pointwise.  Each $s_{\alpha,k}$ is an element of $\aW$, and every reflection in $\aW$ is of this form.  Note that $H_{\alpha} = H_{\alpha,0}$ and $s_{\alpha} = s_{\alpha,0}$ for any $\alpha \in \Phi$. In addition, the affine Coxeter generator $s_0$ satisfies $s_0 = s_{\widetilde \alpha,1}$.

Denote the set of  all affine hyperplanes by $\aH = \{ H_{\alpha,k} \mid \alpha \in \Phi, k \in \Z\}$. The closure of a maximal connected component of $V^* \setminus \aH$ is called an \emph{alcove}.  Since $(\aW,\aS)$ is irreducible, each alcove is a simplex.  We write $\fa$ for the alcove bounded by the hyperplanes $\{ H_{\alpha_1},\dots,H_{\alpha_n}, H_{\widetilde \alpha,1} \}$, and call $\fa$ the \emph{base} or \emph{fundamental alcove}. In particular, $\Cf$ is the unique chamber containing $\fa$. The set of alcoves in $V^*$ is in bijection with the elements of $\aW$, and for $x \in \aW$ we write $\x$ for the alcove $x\fa$.   
A \emph{panel} is a codimension one face of an alcove (recalling that each alcove is a simplex), and the \emph{supporting hyperplane} of a panel $p$ is the unique element of $\aH$ which contains $p$.  A panel $p$ has \emph{type} $i \in [\overline{n}]$ if $p$ is the (unique) panel to be contained in two alcoves of the form $x\fa$ and $x s_i \fa$, where $x \in \aW$.  A face $\sigma$ of an alcove then has \emph{type} $J \subsetneq [\overline{n}]$ if $\sigma$ is the intersection of panels of types $j \in J$.  Note that the action of $\aW$ is type-preserving.

For any root $\alpha\in\Phi$, integer $k \in \Z$, and element $w\in\sW$, we denote by $\alpha^{k,w}$ the closed half-space of $V^*$ bounded by the hyperplane $H_{\alpha, k}$ that contains a subsector of the chamber $\Cw_w$.  In particular, for any $k \in \Z$ the half-space $\alpha^{k,\id}$ contains a subsector of the dominant chamber $\Cf$, and $\alpha^{k,w_0}$ contains a subsector of the antidominant chamber $\Cw_{w_0}$.   The group $\aW$ acts on the set of closed half-spaces $\{ \alpha^{k,w} \mid w \in \sW, k \in \Z \}$ via \[(t^\lambda v) \cdot \alpha^{k,w} = (v\alpha)^{k+\langle \lambda, v\alpha \rangle, vw},\] where $\lambda \in R^\vee$ and $v \in \sW$.


\subsection{Affine buildings and galleries}\label{sec:buildings-prelim}

Now let $X$ be an affine building of type $(\aW,\aS)$ irreducible. We regard $X$ as a simplicial complex, and for any apartment $\App$ of $X$ we may fix an identification of $\App$ with $V^*$ such that we may discuss chambers, roots, hyperplanes, alcoves, panels, and so on in the apartment $\App$.  We  refer to the closed half-spaces of $\App$ determined by hyperplanes in $\aH$ as \emph{half-apartments}. Under this identification, the Coxeter groups $\aW$ and $\sW$ admit natural actions on $\App$, as well as on (certain collections of) half-apartments.

Given a panel $p$ of $X$, 
the set of alcoves in $X$ which contain $p$ is denoted $\star(p)$.
The building $X$ is \emph{thick} if every panel of $X$ is contained in at least three distinct alcoves; that is, if $\star(p)$ has cardinality $\geq 3$ for every panel $p$.  The building $X$ is \emph{panel-regular} if for every $i \in [\overline{n}]$, there is a cardinality $N_i$ such that for every panel $p$ of type $i$, the set of alcoves $\star(p)$ has $N_i+1$ elements.  We do not assume that this cardinality $N_i$ is finite or countable.

We continue by reviewing some terminology related to galleries in affine buildings.  
The galleries in Definition \ref{def:CombGallery} below are special cases of the ones considered in \cite{GaussentLittelmann}.

\begin{definition}\label{def:CombGallery}
	A \emph{(combinatorial) gallery} is a sequence of alcoves $c_j$ and faces $p_j$ in the affine building $X$, say
		$\gamma=(p_0, c_0, p_1, c_1, p_2, \dots , p_l, c_l, p_{l+1} ),$
	where the first and last faces $p_0 \subseteq c_0$ and $p_{l+1}\subseteq c_l$ are simplices, and for $1 \leq j \leq l$ the face~$p_j$ is  a panel of both alcoves $c_{j-1}$ and $c_{j}$. If $p_0 = c_0$ and $p_{l+1} = c_l$, then we simplify notation by writing $\gamma = (c_0, p_1, c_1, p_2,\dots, p_l, c_l)$.
\end{definition}

\noindent The \emph{length} of the gallery $\gamma$ equals the index $l$ of its last alcove $c_l$;  equivalently, $\gamma$ has length one less than the number of alcoves it contains. A gallery $\gamma$ is \emph{minimal} if it has minimal length among all galleries with first face $p_0$ and last face $p_{l+1}$. 

 The length function on galleries yields a natural distance function on the set of alcoves of $X$, as follows.  Given alcoves $c$ and $c'$, the distance between $c$ and $c'$ is defined to be the length of some (hence any) minimal gallery with first alcove $c$ and last alcove $c'$.  One can then show that for any panel $p$ of $X$, and any alcove $d$ of $X$, there exists a unique alcove in $\star(p)$ which is at minimal distance from $d$ in this metric; see Section 5.3 in \cite{AB} for proofs.  This observation permits the following definition.

\begin{definition}\label{defn:gate}
	Let $p$ be a panel of $X$ and $d$ an alcove of $X$. The \emph{gate of $p$ with respect to $d$}, denoted by $\gate_d(p)$,  is the unique alcove in $\star(p)$ that is closest to $d$.  If $d$ is the fundamental alcove $\fa$, we simplify notation and write $\gate(p) = \gate_\fa(p)$.  
\end{definition} 

\noindent The gate of $p$ with respect to $d$ is sometimes also called the \emph{projection of $d$ onto $p$}.

Every gallery in $X$ is also equipped with a type indexed by elements of $\aS$, as follows.

\begin{definition}\label{def:type} Given a gallery $\gamma=(p_0, c_0, p_1, c_1, \dots , p_l, c_l, p_{l+1} )$, the \emph{type} of $\gamma$ is 
	\[
	\type(\gamma) := (\type(p_0), s_{i_1}, s_{i_2}, \dots, s_{i_l}, \type(p_{l+1})),
	\]
	where the panel $p_j$ has type ${i_j} \in [\overline{n}]$ for $1 \leq j \leq l$.  If $p_0 = c_1$ and $p_{l+1} = c_l$ then we write $\type(\gamma) = (s_{i_1}, s_{i_2}, \dots, s_{i_l})$. 
\end{definition}

\noindent For $x \in \aW$ and a fixed minimal gallery from $\fa$ to $\x$, we refer to any gallery of this same type with first face $\fa$ as having \emph{type~$\vec{x}$}. Similarly, for $\lambda \in R^\vee$ and a fixed minimal gallery from $v_0$ to $\lambda$, we refer to any gallery of this same type with first face $v_0$ as having \emph{type~$\vec{\lambda}$}.


\section{Chimney retractions and positively folded galleries}\label{sec:Chimneys}

This section provides a self-contained review of chimneys, as well as their induced orientations on and retractions of affine buildings, following~\cite{MNST}. Section \ref{sec:chimneys}  includes the definition of a chimney. Retractions from chimneys are reviewed in Section~\ref{sec:chimney-retractions}. The natural orientations on apartments and galleries induced by chimneys are discussed in Section \ref{sec:chimneyOrient}. In Section \ref{sec:shadows} we recall the definition of shadows, together with Proposition \ref{prop:shadowsRetractions} which interprets shadows in terms of certain chimney retractions.


\subsection{Chimneys and sectors}\label{sec:chimneys}

We first review the definition of chimneys and sectors in the standard apartment $\App$ of an affine building $X$. For a discussion of related literature and further examples, we refer the reader to \cite[Sec.~3.1]{MNST}.

\smallskip

\begin{definition}\label{def:J-chimney}
Let $J \subseteq [n]$, with corresponding root system $\Phi_J$.  The \emph{$J$-chimney} is the following collection of half-apartments of $\App$:
\[
\xi_J\define 
 \left\{\alpha^{k,\id} \mid \alpha \in \Phi_J^+, k\leq 0 \right\} 
\cup
\left\{\alpha^{k, w_0} \mid \alpha \in \Phi_J^+, k\geq 1\right\}
\cup
\left\{\beta^{k, w_0} \mid \beta\in \Phi^+ \setminus \Phi_J^+, k\in \Z\right\}.
\] 
For any collection of integers $\{n_\beta\in\Z \mid \beta\in \Phi^+ \setminus \Phi_J^+\}$, the corresponding \emph{$J$-sector} is the subcomplex of $\App$ given by 
\[
S_J\left(\{n_\beta\}_{\beta\in \Phi^+ \setminus \Phi_J^+}\right)
\define 
\left( \bigcap_{\alpha\in\Phi_J^+} \alpha^{0, \id} \cap\alpha^{1, w_0} \right) \cap
\left( \bigcap_{\beta\in \Phi^+ \setminus \Phi_J^+} \beta^{n_\beta, w_0}\right).
\]  
\end{definition}

Next we provide several examples of chimneys and sectors.

\begin{example}\label{ex:2Dfigure}
Figure~\ref{fig:chimneyRetraction} shows a piece of an affine building of type $\tilde A_2$, together with its wall trees at infinity. 
The green shaded chamber pointing left in the horizontal base apartment is an example of a $J$-sector with $J=\emptyset$.   A $J$-sector with $\vert J\vert =1$ is pictured in yellow; such a sector lies in between two adjacent hyperplanes for a single root direction and is periodic in the other root direction. In case $J = \{1,2\} = [2]$, then all $J$-sectors are single alcoves, such as any triangle depicted in the base apartment.
\end{example}

\begin{example}\label{ex:extremes}
Two well-studied families of chimneys in arbitrary rank are obtained from considering the extreme cases for $J \subseteq [n]$.
\begin{enumerate}
\item If $J = \emptyset$ so that $\Phi_J =\emptyset$, then the chimney $\xi_{J}$ is the collection of all half-apartments containing subsectors of the antidominant chamber $\Cw_{w_0}$, which can be identified with the chamber at infinity represented by $\Cw_{w_0}$. The corresponding $J$-sectors are all of the translates of $\Cw_{w_0}$.

\item If $J = [n]$ so that $\Phi_J=\Phi$, then the chimney $\xi_J$ is the collection of all half-apartments containing the base alcove $\fa$, and the only $J$-sector is the base alcove $\fa$ itself. 
\end{enumerate}
\end{example}	

We now extend the natural action of $\aW$ on the collection of half-apartments in $\App$ to $J$-chimneys and $J$-sectors; we refer the reader to \cite[Def.~3.7]{MNST} for explicit formulas in terms of half-apartments of $\App$.

\begin{definition}\label{def:Jy-chimney}
For any $J \subseteq [n]$ and any $y \in \aW$, the \emph{$(J,y)$-chimney} $\xi_{J,y}$ is obtained by acting on the $J$-chimney on the left by $y$; that is, $\xi_{J,y}\define y \cdot \xi_J.$
Similarly, for any collection $\{n_\beta\in\Z \mid \beta\in \Phi^+ \setminus \Phi_J^+\}$, the corresponding \emph{$(J,y)$-sector} is 
$ S_{J,y}(\{n_\beta\})\define y \cdot S_{J}(\{n_\beta\})$.
\end{definition}

\begin{example}\label{ex:Jy-extremes}
The following examples illustrate the action of $\aW$ in the special cases of $J$-chimneys from Example~\ref{ex:extremes}.
\begin{enumerate}
\item If $J = \emptyset$ and $y=t^\lambda w \in \aW$, then any $(J,y)$-sector is a translate of the chamber $w\Cw_{w_0} = \Cw_{ww_0}$, and the $(J,y)$-chimney can be identified with the chamber at infinity represented by the chamber $\Cw_{ww_0}$. 

\item If $J = [n]$ and $y \in \aW$, then the $(J,y)$-chimney is the collection of all half-apartments containing the alcove $\y = y\fa$, and the only $(J,y)$-sector is the alcove $\y$ itself.
\end{enumerate}
\end{example}


\subsection{Chimney retractions}\label{sec:chimney-retractions}

We now review the definition of a retraction from a chimney; see \cite[Sec.~3.2]{MNST} for more details, including a discussion of related literature.

\begin{prop}[Proposition 3.15 of \cite{MNST}]\label{prop:J-apartment}
Let $J \subseteq [n]$ and $y \in \aW$.  Then for every alcove $c$ in $X$ there exists a collection of integers $\{n_\beta\in\Z \mid \beta\in \Phi^+ \setminus\Phi_J^+\}$  and an apartment $\App_{c,(J,y)}$ in the complete apartment system of $X$ such that $\App_{c,(J,y)}$ contains both $c$ and the $(J,y)$-sector $S_{J,y}(\{n_\beta\})$.  
\end{prop}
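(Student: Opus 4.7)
The plan is to exhibit the apartment $\App_{c,(J,y)}$ by applying a standard common-apartment axiom in the complete apartment system of $X$, followed by a convexity argument to verify that the resulting apartment contains a $(J,y)$-sector with sufficiently antidominant parameters.

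First, I would observe that the $(J,y)$-sector $S_{J,y}(\{n_\beta\})$ is assembled from two pieces of geometric data: the alcove $\y = y\fa$, which pins down the positions of the $\Phi_J^+$-strip walls of the sector, and the unbounded direction in the $\Phi^+\setminus\Phi_J^+$ coordinates, which is cofinal with the chamber at infinity represented by $yw_0\cdot\Cf$. For this reason, once an apartment contains both $\y$ and a subsector of the Weyl sector $\mathfrak{C}\subset\App$ pointing toward that chamber at infinity, pushing the parameters $\{n_\beta\}$ sufficiently negative forces the corresponding $(J,y)$-sector into this apartment.

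Next, I would invoke a refinement of the classical sector-alcove common apartment axiom: given the two alcoves $c,\y$ together with the Weyl sector $\mathfrak{C}$ in $X$, there exists an apartment $\App' = \App_{c,(J,y)}$ in the complete apartment system containing both $c$ and $\y$ as well as a subsector $\mathfrak{C}'\subset\mathfrak{C}$. This refinement follows by iteratively applying the basic building axioms: starting from an apartment obtained via the classical sector-alcove axiom (which contains $c$ and some subsector of $\mathfrak{C}$), one then uses the axiom that any two apartments are related by a type-preserving isomorphism fixing their intersection to replace or extend the initial apartment so as to incorporate $\y$ as well; see for instance Chapter~11 of \cite{AB}.

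Finally, I would choose $\{n_\beta\}$ with each $n_\beta$ sufficiently negative so that the $(J,y)$-sector $S_{J,y}(\{n_\beta\})$ lies inside the convex region of $\App'$ spanned by $\y$ and the subsector $\mathfrak{C}'$. Since $\App'$ is a convex subcomplex of $X$ containing both $\y$ and $\mathfrak{C}'$, it must then contain the whole $(J,y)$-sector, completing the construction. The main obstacle I anticipate is establishing the refined common-apartment axiom cleanly in the generality of thick, panel-regular affine buildings (as opposed to just Bruhat--Tits buildings): this amounts to careful manipulation of apartment isomorphisms and ensuring that the positions of the $\Phi_J^+$-strip walls through $\y$ are preserved under the canonical identification $\App'\cong\App$ fixing the intersection $\mathfrak{C}'$, so that the walls in $\App'$ bounding the strip portion of $S_{J,y}(\{n_\beta\})$ coincide with those determined by $\y$ in $\App$.
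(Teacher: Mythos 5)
The paper offers no proof of this statement: it is imported wholesale from \cite{MNST} (Proposition 3.15 there), so your argument has to stand on its own. It does not, because the ``refinement of the classical sector-alcove common apartment axiom'' that you invoke in your second step is false. Suppose an apartment $\App'$ contains $\y$ together with a subsector $\mathfrak{C}'$ of a sector in $\App$ pointing toward the chamber at infinity of $y\Cw_{w_0}$. Since the intersection of two apartments sharing an alcove is an intersection of half-apartments, $\App'\cap\App$ must contain the combinatorial convex hull of $\y\cup\mathfrak{C}'$, and a direct computation shows this hull equals $y\cdot\bigcap_{\beta\in\Phi^+}\beta^{1,w_0}$, \emph{independently} of how deep the subsector $\mathfrak{C}'$ sits. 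Already in type $\tilde A_2$ with $y=\id$ this region contains both alcoves $\fa$ and $s_1\fa$ of $\App$ in the star of the type-$1$ panel $q_1$ of $\fa$; by thickness there is a third alcove $c\in\star(q_1)$ lying in no apartment through both $\fa$ and $s_1\fa$. For such a $c$ there is simply no apartment containing $c$, $\y$, and a subsector of $\mathfrak{C}$, so your construction cannot start. An iteration of the basic building axioms will not repair this: the obstruction is genuine, not a failure of bookkeeping.

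The underlying miscalibration is the role you assign to $\y$. For the very negative parameters $\{n_\beta\}$ you ultimately intend to choose, the sector $S_{J,y}(\{n_\beta\})$ does \emph{not} contain $\y$; the proposition neither requires the apartment to pass through $\y$ nor, as the example shows, can this generally be arranged. What is true is that the strip walls $yH_{\alpha,k}$, $\alpha\in\Phi_J^+$, $k\in\{0,1\}$, must be seen by the apartment arbitrarily far out in the $\Phi^+\setminus\Phi_J^+$ directions, and no single bounded piece of data (such as $\y$) together with a chamber at infinity captures this. A workable argument instead treats the $(J,y)$-sectors as a filter and proceeds, for instance, by induction on the gallery distance from $c$ to $\App$: each time the gallery branches off the current apartment at a panel $p$, one exchanges half-apartments and shrinks the parameters $n_\beta$ so that the retained subsector avoids $\star(p)$. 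I will add that your final step is essentially sound once ``the convex region spanned by'' is read as the combinatorial convex hull (intersection of all half-apartments containing both sets); the metric convex hull of $\y\cup\mathfrak{C}'$ does \emph{not} contain the sector, which hugs the walls $yH_{\alpha,1}$ out to infinity. But this correct step cannot compensate for the nonexistence of the apartment it is applied to.
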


\begin{definition}\label{def:chimney-retraction}
Let $J \subseteq [n]$ and $y \in \aW$.   
For any alcove $c$ of $X$, let $r_{J,y}(c)$ be the image of $c$ under the unique isomorphism that maps an apartment $\App_{c,(J,y)}$ as in the statement of Proposition~\ref{prop:J-apartment} onto $\App$, while fixing $\App_{c,(J,y)} \cap \App$ pointwise.  The resulting induced simplicial map  $$r_{J,y}:X\to \App$$ is the \emph{retraction from the $(J,y)$-chimney}.    If $y = \id$, we call $r_J = r_{J,\id}$ the \emph{retraction from the $J$-chimney}. A \emph{chimney retraction} is a retraction from the $(J,y)$-chimney for some $J \subseteq [n]$ and $y \in \aW$.
\end{definition}

Note that any chimney retraction $r_{J,y}$ can also be applied to a gallery $\gamma$ in $X$ by applying the retraction $r_{J,y}$ simultaneously to all simplices in $\gamma$.  We will make repeated use, without comment, of the fact that chimney retractions are type-preserving, and hence in particular $r_{J,y}(\gamma)$ is a gallery of the same type as $\gamma$.

\begin{figure}[ht]
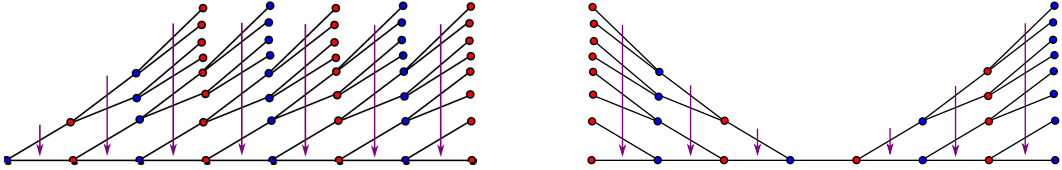

	\begin{minipage}[b]{0.45\linewidth}
		\centering
		\includegraphics[width=0.9\textwidth]{TreeRetractionInfinity-flipped}
	\end{minipage}
	\hspace{0.5cm}
	\begin{minipage}[b]{0.45\linewidth}
		\centering
		\includegraphics[width=0.9\textwidth]{TreeRetractionAlcove}
	\end{minipage}
	\caption{Retractions of a tree from a chamber at infinity (left), and from an alcove (right).}
	\label{fig:treeRetractions}
\end{figure}

 \begin{figure}[h]
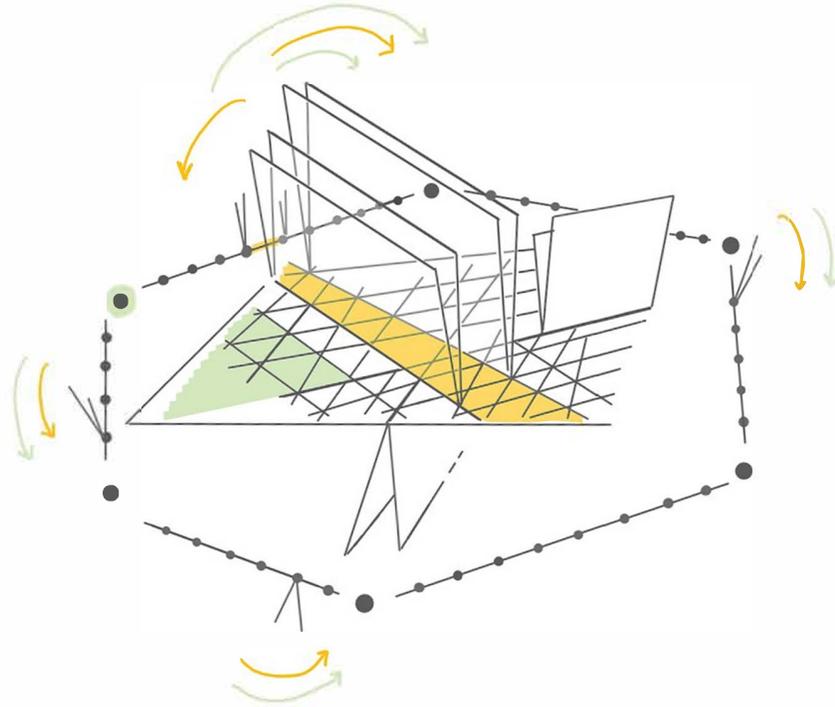

		\resizebox{0.75\textwidth}{!}
		{\begin{overpic}{chimneyRetraction-color}
			\end{overpic}}
		\caption{Chimney retractions in a two-dimensional affine building, where the chimney is a represented by a chamber (in green) and is represented by a $J$-sector with $|J|$=1 (in yellow).}	
		\label{fig:chimneyRetractionExample}	
\end{figure}

\begin{example}\label{ex:2Dfigure-retract}
Examples of chimney retractions are shown in Figures~\ref{fig:treeRetractions} and~\ref{fig:chimneyRetractionExample}.

In the tree case illustrated by Figure \ref{fig:treeRetractions}, the only possible chimney retractions are the classical retractions based at chambers at infinity or alcoves. These two familiar retractions are illustrated in Figure~\ref{fig:treeRetractions}. On the left, we see the retraction based at a chamber at infinity, located to the left of the horizontal base apartment. On the right, we show the retraction based at an alcove, which here is the edge in the tree depicted at the center of the figure.

Figure~\ref{fig:chimneyRetractionExample} illustrates the situation in a two-dimensional affine building. A chamber, equivalently a $\emptyset$-sector, is drawn in green pointing left in the horizontal base apartment. The effect of the corresponding retraction on the branching apartments shown folds every half-apartment away from the chamber at infinity corresponding to this $\emptyset$-sector, as indicated by the green arrows.  Now compare the action of the yellow arrows, which illustrate the retraction induced by the yellow $J$-sector such that $|J|=1$.  The half-apartments branching from the two hyperplanes along the boundary of the yellow $J$-sector fold down onto the base apartment in two opposite directions, as indicated by the yellow arrows.  For half-apartments which branch from hyperplanes not parallel to the boundary of the yellow $J$-sector, these fold in the same direction as the green arrows; namely, away from the chamber at infinity corresponding to the green $\emptyset$-sector.

 Moreover, these two chimney retractions on the two-dimensional building from Figure \ref{fig:chimneyRetraction} induce the two different kinds of retractions depicted in Figure \ref{fig:treeRetractions} on the wall trees at infinity.  Looking at the wall tree on the upper left in Figure \ref{fig:chimneyRetractionExample} containing the shaded yellow alcove, the $J$-chimney retraction coincides with the alcove retraction on the tree, as shown on the right in Figure \ref{fig:treeRetractions}. The $\emptyset$-chimney retraction on this same wall tree in Figure \ref{fig:chimneyRetraction} coincides with the retraction from the chamber at infinity indicated by the green dot, resulting in the type of retraction seen on the left in Figure \ref{fig:treeRetractions}.

\end{example}

We now discuss the retractions from those $(J,y)$-chimneys considered in Example~\ref{ex:Jy-extremes}.

\begin{example}\label{ex:Jy-retraction}
Both of the following chimney retractions are well-known retractions of affine buildings in arbitrary rank.
\begin{enumerate}
\item If $J = \emptyset$ and $y=t^\lambda w \in \aW$, then $r_{J,y}$ is the retraction onto $\App$ centered at the chamber at infinity represented by the chamber $\Cw_{ww_0}$. 

\item If $J = [n]$ and $y \in \aW$, then $r_{J,y}$ is the retraction onto $\App$ centered at the alcove $\y = y\fa$. 
\end{enumerate}
\end{example}


\subsection{Chimney orientations and positively folded galleries}\label{sec:chimneyOrient}

We next review some terminology concerning orientations on apartments and positively folded galleries.  We refer the reader to \cite{GraeberSchwer} for more background on this material.

\begin{definition}\label{def:orientation} An \emph{orientation} of the apartment $\App$ is a map $\phi$ that assigns to every pair $(c,p)$ consisting of an alcove $c$ of $\App$ and a panel $p$ of $c$ one of the symbols $+$ or $-$. If $\phi(c,p)=+$ (resp.~$\phi(c,p)=-$), we say that the alcove $c$ is on the \emph{positive} (resp.~\emph{negative}) side of $p$.  
\end{definition}

Any orientation of the apartment $\App$ gives rise to orientations on crossings of galleries at its panels, as follows.

\begin{definition}\label{def:posCross} Let $\phi$ be an orientation of $\App$, and let $\gamma = (p_0,c_0,p_1,\dots, c_l,p_{l+1})$ be a gallery in $\App$.  For $1 \leq j \leq l$, if $c_{j-1}\neq c_j$ we say that \emph{$\gamma$ has a crossing at $p_j$}.  Moreover, if $\gamma$ has a crossing at $p_j$, then it has a:
\begin{enumerate} 
\item \emph{positive crossing at $p_j$ with respect to $\phi$} if $\phi(c_{j-1},p_j) = -$ and $\phi(c_j,p_j)=+$; or
\item \emph{negative crossing at $p_j$ with respect to $\phi$} if $\phi(c_{j-1},p_j) = +$ and $\phi(c_j,p_j)=-$.  \end{enumerate}	
\end{definition}

Galleries can also have alcoves which are repeated in succession, resulting in what we refer to as \emph{folds}, and these folds also inherit an orientation from that of $\App$. 

\begin{definition}\label{def:posFold} Let $\phi$ be an orientation of $\App$, and let $\gamma = (p_0,c_0,p_1,\dots, c_l,p_{l+1})$ be a gallery in $\App$.   For $1 \leq j \leq l$, if $c_{j-1} = c_j$ we say that $\gamma$ is \emph{folded at $p_j$.} Moreover, if $\gamma$ is folded at $p_j$, then it is:
\begin{enumerate}
\item \emph{positively folded at $p_j$ with respect to $\phi$} if $\phi(c_j, p_j)=+$; or 
\item \emph{negatively folded at $p_j$ with respect to $\phi$} if $\phi(c_j, p_j)=-$.
\end{enumerate}
 	We say that $\gamma$ is \emph{positively (resp.~negatively) folded with respect to  $\phi$} if for all $1 \leq j \leq l$ such that $\gamma$ is folded at $p_j$, we have $\phi(c_j, p_j)=+$ (resp.~$\phi(c_j, p_j)=-$).
\end{definition}

\noindent In both Definitions~\ref{def:posCross} and \ref{def:posFold}, if the orientation is clear we may omit the phrase ``with respect to $\phi$".  Any orientation on $\App$ can thus give rise to positively folded galleries; see \cite[Sec.~2.2]{MNST} for further discussion.  

Any $(J,y)$-chimney induces a natural orientation on $\App$ in the sense of Definition~\ref{def:orientation}, as we now review. We refer the reader to \cite[Sec.~3.3]{MNST} for more details.

\begin{definition}\label{def:chimneyOrientation} 
Let $J \subseteq [n]$ and $y \in \aW$.  Let $c$ be an alcove of $\App$, let $p$ be a panel of $c$, and let $H$ be the hyperplane of $\App$ supporting $p$.  The \emph{orientation induced by the $(J,y)$-chimney}, denoted $\phi_{J,y}$, is defined by $\phi_{J,y}(c,p) = +$ if and only if the half-apartment determined by $H$ which contains $c$ is \emph{not} an element of the $(J,y)$-chimney.  A gallery in $\App$ is \emph{positively folded with respect to the $(J,y)$-chimney} if it is positively folded with respect to the orientation $\phi_{J,y}$.
\end{definition}

Intuitively, an alcove $c$ is on the positive side of $p$ with respect to $\phi_{J,y}$ if $c$ is on the side of $p$ which ``faces away from'' the $(J,y)$-chimney.  Similarly, a gallery is positively folded with respect to a chimney if it is ``folded away from'' the chimney.

\begin{example}\label{ex:Jy-retraction}
This example continues our analysis of the two well-known special cases of chimneys, which recover familiar orientations from the literature.
\begin{enumerate}
\item If $J = \emptyset$ and $y=1$, then the orientation $\phi_{J,1}$ coincides the periodic orientation on hyperplanes considered in \cite{PRS}. For $J = \emptyset$ and $y=t^\lambda w \in \aW$, the orientation $\phi_{J,y}$ agrees with the orientation at infinity corresponding to $w \in \sW$ from \cite{MST1}.

\item If $J = [n]$ and $y \in \aW$, then $\phi_{J,y}$ coincides with the opposite orientation induced by the alcove $\y = y\fa$ from \cite{GraeberSchwer}.
\end{enumerate}
\end{example}


\subsection{Shadows and retractions }\label{sec:shadows}

This section reviews the definition of shadows following Section 3.4 of \cite{MNST}, which in turn generalizes the regular shadows original to \cite{GraeberSchwer}.

\begin{definition}\label{def:shadow}  
	Let $J \subseteq [n]$ and $y \in \aW$. Let $x \in \aW$, and let $\sigma$ and $\tau$ be faces of the fundamental alcove $\fa$ which contain the origin.  Fix a minimal gallery $\gamma$ with first face $\sigma$ and last face $x\tau$. 
	The \emph{shadow of $x\tau$ starting at $\sigma$ with respect to the $(J,y)$-chimney}, denoted $\Sh_{J,y}(x\tau, \sigma)$, is the set of final simplices of all galleries of $\type(\gamma)$ which have first face~$\sigma$ and are positively folded with respect to the $(J,y)$-chimney (equivalently, with respect to the orientation $\phi_{J,y}$).
\end{definition}

In the following example, we discuss Figure~\ref{fig:shadows} which illustrates a family of alcove shadows, meaning that $\sigma = \tau = \fa$.

\begin{example}\label{ex:A2shadow}
Figure~\ref{fig:shadows} depicts a family of shadows in type $\tilde{A}_2$ with $J = \{ 1 \} \subseteq [2]$.  In this figure, we see the shadow of the alcove $\x = t^{-3(\alpha_1^\vee + \alpha_2^\vee)} \fa$ with respect to the $(J,y)$-chimney for several different choices of $y \in \aW$. The shadow of $\x$ with respect to the $(J,\id)$-chimney is the collection of alcoves colored black (including $\x$ itself); a sector representing this chimney is also colored black, located toward the lower right corner of the figure.

The other sectors along the bottom of Figure \ref{fig:shadows} represent $(J,y)$-chimneys for various translations $y \in \aW$.  For each such $(J,y)$-sector, of color say $C$, the corresponding shadow of $\x$ is the union of the black alcoves, the alcoves of color $C$, and all alcoves which are the same color as the sectors strictly between the black sector and the sector of color $C$.
\end{example}

We refer the reader to the figures in \cite{MNST} for further illustrations of various families of shadows in types $\tilde A_2$ and $\tilde C_2$.

\begin{notation}\label{not:shadow}  We use simpler notation for the most common cases of shadows.
	\begin{enumerate}
		\item If  $\sigma$ and $\tau$ both equal the fundamental alcove $\fa$, so that $x\tau = x\fa = \x$, the shadow of $\x$ is the set of final alcoves of galleries of type $\vec{x}$ which have first face and first alcove $\fa$ and which are positively folded with respect to the $(J,y)$-chimney.   We denote this shadow by $\Sh_{J,y}(\x)$.  If further $J = [n]$, we denote the shadow by $\Sh_\y(\x)$.
		
		\item If $\sigma$ and $\tau$ both equal the origin, so that $\lambda:=x\tau \in R^\vee$, the shadow of $\lambda$ is the set of final vertices of galleries of type $\vec{\lambda}$ which have first face the origin and which are positively folded with respect to the $(J,y)$-chimney.  We denote this shadow by $\Sh_{J,y}(\lambda)$.  If further $J = [n]$, we denote this shadow by $\Sh_\y(\lambda)$.
	\end{enumerate}
\end{notation}

Recall that $r_{J,y}: X\to \App$ denotes the retraction onto $\App$ from the $(J,y)$-chimney. 
Let $r:X \to \App$ denote the classical retraction $r_{[n],1}$ centered at the base alcove~$\fa$.

\begin{prop}[Proposition 3.32 of~\cite{MNST}]\label{prop:shadowsRetractions}
	Let $J \subseteq [n]$ and $y \in \aW$.   
	\begin{enumerate}
		\item For all $x \in \aW$, the shadow of $\x = x\fa$ with respect to the $(J,y)$-chimney is the set of alcoves given by 	
		\[
		\Sh_{J,y}(\x) =  r_{J,y} \circ r^{-1}(\x).
		\]
		\item For all $\lambda \in R^\vee$, the shadow of $\lambda$ with respect to the $(J,y)$-chimney is the set of vertices in $R^\vee$ given by 
		\[
		\Sh_{J,y}(\lambda) = r_{J,y} \circ r^{-1}(\sW \cdot \lambda).
		\]
	\end{enumerate}
\end{prop}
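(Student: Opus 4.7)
The plan is to prove part (1) by verifying two complementary properties of the chimney retraction $r_{J,y}$: first, that $r_{J,y}$ sends minimal galleries in $X$ starting at $\fa$ to galleries in $\App$ that are positively folded with respect to the orientation $\phi_{J,y}$; and second, that every such positively folded gallery in $\App$ can be ``unfolded'' to a minimal gallery in $X$ while preserving its image under $r_{J,y}$. Part (2) will then follow from part (1) by decomposing galleries starting at $v_0$ according to their first alcove in $\star(v_0)$.

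For the $\supseteq$ inclusion in part (1), I would take $c \in r^{-1}(\x)$ and use the fact that $r$ is type-preserving and preserves distances to $\fa$ to produce a minimal gallery $\gamma$ in $X$ from $\fa$ to $c$ of type $\vec{x}$. Applying $r_{J,y}$ simplicially to $\gamma$ gives a gallery $r_{J,y}(\gamma)$ in $\App$ of type $\vec{x}$ ending at $r_{J,y}(c)$. The key point is that $r_{J,y}$ can only create folds, not crossings; and by Definition~\ref{def:chimneyOrientation}, any fold introduced by $r_{J,y}$ lies on the side of its supporting hyperplane which is \emph{not} a half-apartment of the $(J,y)$-chimney, and hence is positive for $\phi_{J,y}$. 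This places $r_{J,y}(c)$ in $\Sh_{J,y}(\x)$.

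For the $\subseteq$ inclusion in part (1), I would start with a positively folded gallery $\gamma' = (\fa, p_1, c_1', \dots, p_l, c_l')$ in $\App$ of type $\vec{x}$ ending at $\z$, and construct by induction on $j$ a minimal gallery $\gamma$ in $X$ whose image under $r_{J,y}$ is $\gamma'$. At each step where $\gamma'$ has a positive fold at $p_j$, I would invoke thickness of $X$ together with Proposition~\ref{prop:J-apartment} to select an alcove $c_j \in \star(p_j)$ in the fiber $r_{J,y}^{-1}(c_j')$ that differs from $c_{j-1}$; this yields an actual crossing at $p_j$ rather than a fold. The resulting gallery $\gamma$ is minimal of type $\vec{x}$, so its endpoint $c$ satisfies $r(c) = \x$, while by construction $r_{J,y}(c) = \z$. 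The main obstacle here is to verify carefully that at each positive fold there really exists an alcove in $\star(p_j)$ that retracts under $r_{J,y}$ to the desired image but differs from the current alcove, which requires analyzing how $r_{J,y}$ acts on $\star(p_j)$ for a panel $p_j$ whose supporting hyperplane bounds a half-apartment of the $(J,y)$-chimney.

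For part (2), I would reduce to part (1) by observing that the alcoves of $\App$ containing $v_0$ are exactly $\{w\fa \mid w \in \sW\}$, and that a gallery of type $\vec{\lambda}$ starting at $v_0$ is uniquely determined in $\App$ by the choice of its initial alcove $c_0 \ni v_0$; when $c_0 = w\fa$ and the gallery is minimal, its final vertex is $w\lambda$. Thus $\Sh_{J,y}(\lambda)$ decomposes as a union over $w \in \sW$ of the final vertices of positively folded galleries starting at $w\fa$. Similarly, $r^{-1}(\sW \cdot \lambda) = \bigsqcup_{w \in \sW} r^{-1}(w\lambda)$, and the latter sets correspond to vertices of alcoves of the form considered in part (1). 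Matching these decompositions via part (1) applied to each alcove in $\star(w\lambda)$ that is the endpoint of the relevant minimal gallery, and then restricting from alcove endpoints to vertex endpoints, will complete the proof.
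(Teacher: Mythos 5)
Your proposal is correct and is exactly the standard two-step argument: retracting a minimal gallery can only introduce folds, which land on the non-chimney (positive) side of the supporting hyperplane, and conversely thickness allows each positive fold of a positively folded gallery to be lifted to a genuine crossing in $X$, yielding a minimal preimage with the prescribed $r_{J,y}$-image. Note that the paper itself imports this statement from Proposition~3.32 of \cite{MNST} without reproving it, but the machinery you sketch is precisely what the paper develops in Section~\ref{sec:retractions} for the labeled refinement Theorem~\ref{thm:chimneyLFGs} --- your folding direction is Proposition~\ref{prop:retractMinimal}/Corollary~\ref{cor:folding} (with Lemmas~\ref{lem:gateRetractHalfApt} and~\ref{lem:bounded} supplying the local identification of $r_{J,y}$ with an alcove retraction $r_d$ that your ``main obstacle'' requires), your unfolding direction is Proposition~\ref{prop:unfold} stripped of its labels, and part~(2) is the same argument run with first face $v_0$ in place of first alcove $\fa$ (with the caveat that your phrase ``uniquely determined by its initial alcove'' holds only for the minimal, not the folded, galleries).
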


The shadows occurring in Proposition~\ref{prop:shadowsRetractions} will be connected to certain double cosets of groups acting on $X$ later in  Section \ref{sec:chimney-shadows}.


\section{Retractions and labeled folded galleries in affine buildings}\label{sec:retractions}

This section aims to describe the combinatorial and geometric effect of chimney retractions on certain collections of minimal galleries in affine buildings.  In Section~\ref{sec:retractMin} we introduce gates with respect to chimneys, and use these to describe the effect of chimney retractions on arbitrary minimal galleries.  In Section \ref{sec:LFGs}, we introduce a notion of labeled folded galleries for arbitrary affine buildings, generalizing the definitions used in \cite{PRS, MST1, MNST}.  We then use this structure to establish bijections  between sets of minimal galleries and sets of labeled folded galleries.  The main result of this section is Theorem~\ref{thm:chimneyLFGs}, which is proved in Section \ref{sec:chimneysLFGs}, and places sets of labeled folded galleries in bijection with the intersection of preimages of the two main types of retractions considered in this paper.


\subsection{Effect of retractions on minimal galleries}\label{sec:retractMin}

In this section we give a precise description of the effect of chimney retractions on minimal galleries in $X$, in Proposition~\ref{prop:retractMinimal} below. We can then define a map from minimal galleries to positively folded galleries, in Corollary~\ref{cor:folding}.  To specify the effect of chimney retractions, we introduce the notion of a gate with respect to a chimney; see Definition~\ref{defn:gateChimney}.

Recall that $(\aW, \aS)$ is an irreducible affine Coxeter system with $[\overline{n}]$ the index set for~$\aS$, and that $X$ is an affine building of type $(\aW,\aS)$.  Recall that for any panel $p$ of $X$, we denote by $\star(p)$ the set of alcoves containing $p$, and that for any alcove $d$ of $X$, we denote by $\gate_d(p)$ the gate of $p$ with respect to $d$, as in Definition~\ref{defn:gate}.
 
The next result collects some useful relationships between gates, retractions, and half-apartments.  Given an alcove $d$ of the standard apartment $\App$, denote by $r_d: X \to \App$ the retraction of $X$ onto $\App$ centered at $d$.  If $d = \fa$, we write $r: X \to \App$.

\begin{lemma}\label{lem:gateRetractHalfApt}  
Let $d$ be an alcove of $\App$, let $p$ be a panel of $X$, and let $p'$ be the panel $r_d(p)$ of $\App$.  Let $H$ be the hyperplane of $\App$ supported by $p'$, let $H^-_d$ be the half-apartment of $\App$ which is bounded by $H$ and contains $d$, and let $H^+_d$ be the half-apartment of $\App$ which is bounded by $H$ and does not contain $d$.  

Then for all $c \in \star(p) \setminus \{ \gate_d(p)\}$, the alcoves $r_d(c)$ and $r_d(\gate_d(p))$ are the two distinct alcoves of $\App$ which contain the panel $p'$, with $r_d(c) \in H^+_d$ and $r_d(\gate_d(p)) \in H^-_d$.  In particular, $r_d(\gate_d(p)) = \gate_d(p')$, while for all $c \in \star(p) \setminus \{ \gate_d(p)\}$, we have $r_d(c) \neq \gate_d(p')$.
\end{lemma}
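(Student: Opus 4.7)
The plan is to prove this lemma by exploiting two standard properties of the retraction $r_d$ onto $\App$ centered at the alcove $d$: first, that $r_d$ is type-preserving and sends incident simplices to incident simplices; and second, that $r_d$ preserves combinatorial distances from $d$, in the sense that $\operatorname{dist}(d, r_d(c)) = \operatorname{dist}(d, c)$ for every alcove $c$ of $X$. The second property follows because any apartment $\App'$ containing both $d$ and $c$ (which exists by the building axioms) maps isomorphically onto $\App$ under $r_d$, fixing $\App' \cap \App$ pointwise, so distances from $d$ inside $\App'$ are preserved.

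I would then argue as follows. Since $r_d$ is type-preserving, $p' = r_d(p)$ is a panel of every $r_d(c)$ for $c \in \star(p)$, and by the thinness of $\App$ the set $\star(p')$ consists of exactly two alcoves, one in each of the half-apartments $H^-_d$ and $H^+_d$. Applying the distance-preserving property to $\gate_d(p)$ yields
\[
\operatorname{dist}(d, r_d(\gate_d(p))) = \operatorname{dist}(d, \gate_d(p)),
\]
and since $\gate_d(p)$ is the alcove of $\star(p)$ closest to $d$ while $r_d(\gate_d(p)) \in \star(p')$, it follows that $r_d(\gate_d(p))$ is the closer of the two alcoves of $\star(p')$ to $d$; equivalently, $r_d(\gate_d(p)) = \gate_d(p')$. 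By the definition of $H^-_d$ as the half-apartment containing $d$, this alcove lies in $H^-_d$.

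For $c \in \star(p) \setminus \{\gate_d(p)\}$, the defining property of the gate tells us that every minimal gallery from $d$ to $c$ must pass through $\gate_d(p)$ and then cross the panel $p$, so $\operatorname{dist}(d, c) = \operatorname{dist}(d, \gate_d(p)) + 1$. Applying distance preservation again gives $\operatorname{dist}(d, r_d(c)) = \operatorname{dist}(d, \gate_d(p')) + 1$, so $r_d(c) \neq \gate_d(p')$, and hence $r_d(c)$ must be the other of the two alcoves of $\star(p')$, lying in $H^+_d$. The main (minor) obstacle is cleanly invoking the distance-preservation property of $r_d$, which in the setting of general affine buildings must be justified via the common apartment axiom rather than treated as automatic; once this is in hand, the rest is a routine comparison of distances and sides of the hyperplane $H$.
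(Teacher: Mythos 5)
Your proposal is correct and takes essentially the same route as the paper, whose entire proof is the one-line observation that $r_d$ preserves distances from $d$; you simply spell out the routine deductions (gatedness of $\star(p)$, thinness of $\App$, and the fact that the nearer of the two alcoves of $\star(p')$ lies on the same side of $H$ as $d$) that the paper leaves implicit.
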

\begin{proof} These properties all follow from the fact that $r_d$ preserves the distance from any alcove of $X$ to $d$.
\end{proof}

There is also a close relationship between gates and minimal galleries in $X$, as follows.

\begin{lemma}\label{lem:gate}
Let $\gamma = (p_0,c_0, p_1, c_1, \dots, c_{l-1}, p_l,c_l,p_{l+1})$ be a gallery in $X$ which has the type of a minimal gallery.  Then $\gamma$ is minimal if and only if for all $1 \leq j \leq l$, we have $c_{j-1}=\gate_{c_0}(p_j)$ and $c_j \neq c_{j-1}$.
\end{lemma}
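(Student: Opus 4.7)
The plan is to prove the biconditional using two standard facts about gates in buildings (see Section 5.3 of \cite{AB}):
(i) $\gate_{c_0}(p)$ is the unique alcove in $\star(p)$ at minimum distance from $c_0$; and
(ii) every other alcove $c \in \star(p)$ satisfies $d(c_0, c) = d(c_0, \gate_{c_0}(p)) + 1$.
The strategy is to transfer the face-level minimality hypothesis to the alcove-level equality $d(c_0, c_l) = l$, and then extract and reverse-engineer the gate and no-fold conditions from this equality.

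For the forward direction, suppose $\gamma$ is minimal.  Since any gallery from $c_0$ to $c_l$ can be prepended with $p_0$ and appended with $p_{l+1}$ to give a gallery from $p_0$ to $p_{l+1}$ of the same length, minimality of $\gamma$ forces $d(c_0, c_l) = l$.  Subgallery arguments give the bounds $d(c_0, c_j) \leq j$ and $d(c_j, c_l) \leq l - j$; combined with the triangle inequality $d(c_0, c_j) + d(c_j, c_l) \geq d(c_0, c_l) = l$, equality holds throughout, so $d(c_0, c_j) = j$ for every $0 \leq j \leq l$.  A fold $c_{j-1} = c_j$ would give $d(c_0, c_j) = j - 1$, a contradiction.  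For the gate condition, $c_{j-1}$ and $c_j$ lie in $\star(p_j)$ with distances $j-1$ and $j$ from $c_0$, respectively; a short case analysis using (ii) rules out $\gate_{c_0}(p_j) \neq c_{j-1}$, since otherwise $d(c_0, c_{j-1}) = d(c_0, \gate_{c_0}(p_j)) + 1$ would force $d(c_0, \gate_{c_0}(p_j)) = j - 2$ and then $d(c_0, c_j) \in \{j-2, j-1\}$.

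For the backward direction, assume for every $j$ that $c_{j-1} = \gate_{c_0}(p_j)$ and $c_j \neq c_{j-1}$.  An induction on $j$, using (ii) at each step, gives $d(c_0, c_j) = j$: in the inductive step, $c_{j-1}$ is the gate and has distance $j - 1$ from $c_0$, and $c_j \in \star(p_j) \setminus \{c_{j-1}\}$, so $d(c_0, c_j) = (j-1) + 1 = j$.  Hence $d(c_0, c_l) = l$, and $\gamma$ realizes the distance between its first and last alcoves.  To conclude minimality from $p_0$ to $p_{l+1}$, I invoke the hypothesis that $\gamma$ has the type of a minimal gallery: the existence of a minimal gallery of the same type as $\gamma$ pins down the minimum length of galleries from $p_0$ to $p_{l+1}$ as $l$, so any competitor pair $c_0' \in \star(p_0)$ and $c_l' \in \star(p_{l+1})$ must satisfy $d(c_0', c_l') \geq l$, and $\gamma$'s length $l$ achieves this minimum.

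I expect the main subtlety to be reconciling the face-level formulation of minimality in Definition \ref{def:CombGallery} with the alcove-level distance function.  The forward direction handles this cleanly, because a shorter gallery from $c_0$ to $c_l$ immediately supplies a shorter gallery from $p_0$ to $p_{l+1}$.  The backward direction is more delicate, since in principle there could be alcoves $c_0' \in \star(p_0)$ and $c_l' \in \star(p_{l+1})$ closer to each other than $c_0$ and $c_l$ are; the type hypothesis is used precisely to exclude this.  Once the reduction to $d(c_0, c_l) = l$ is in place, the heart of the proof is a direct application of the gate properties (i) and (ii) inductively along $\gamma$.
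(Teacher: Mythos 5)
Your proof is correct and takes essentially the same route as the paper's: both directions reduce to the alcove-distance conditions $d(c_0,c_j)=j$ and then apply the uniqueness of the gate together with the fact that every non-gate alcove of $\star(p_j)$ is one step further from $c_0$, with an induction for the converse. You are actually more explicit than the paper about reconciling face-level minimality with alcove distances (the paper simply asserts this equivalence in its opening sentence); the one caveat is that in the converse your final step must use the type hypothesis \emph{together with} the already-established equality $d(c_0,c_l)=l$ — the type alone does not pin down the relative position of $p_0$ and $p_{l+1}$, since a folded gallery of the same type could join much closer faces — but your argument has both ingredients in hand, so this is a matter of phrasing rather than a gap.
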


\begin{proof}  Since $\gamma$ has the type of a minimal gallery, it is minimal if and only if for all $1 \leq j \leq l$, we have $d(c_0, c_{j-1}) = j-1$  and $d(c_0,c_j) = j$.  Noticing that the alcoves $c_{j-1}$ and $c_{j}$ are both elements of $\star(p_j)$, consider $c,c' \in \star(p_j) \setminus \{ \gate_{c_0}(p_j) \}$.  As $\gate_{c_0}(p_j)$, $c$, and $c'$ all contain $p_j$, and $\gate_{c_0}(p_j)$ is the unique alcove in $\star(p_j)$ to be at minimal distance from $c_0$, we then have $d(c_0,\gate_{c_0}(p_j)) + 1 = d(c_0, c) = d(c_0,c')$.  Thus if $\gamma$ is minimal, we obtain $c_{j-1} = \gate_{c_0}(p_j)$ and $c_j \neq c_{j-1}$ for all $1 \leq j \leq l$, as required.

For the converse, we induct on the length of $\gamma$.  Suppose first that $\gamma = (p_0,c_0, p_1, c_1,p_2)$ with $c_1 \neq c_0$, and note that $\gate_{c_0}(p_1) = c_0$ since $c_0 \in \star(p_1)$.  Then it is clear that $\gamma$ is minimal.  Now suppose $\gamma$ is of length $l > 1$ with $c_{j-1} = \gate_{c_0}(p_j)$ and $c_j \neq c_{j-1}$ for all $1 \leq j \leq l$.  Then by induction, the initial subgallery $(p_0,c_0, p_1, c_1, \dots, c_{l-1},p_l)$ is minimal.  Hence $d(c_0,c_j) = j$ for all $1 \leq j \leq l-1$.  Now by the inductive assumption, $c_{l-1} = \gate_{c_0}(p_l)$ and $c_l \neq \gate_{c_0}(p_l)$, and thus from the previous paragraph we have $d(c_0,c_{l-1}) + 1 = d(c_0, c_l)$.  But then $d(c_0,c_l) = (l-1) + 1 = l$, and we conclude that $\gamma$ is minimal.
\end{proof}

To define gates with respect to chimneys, we will need the following result.

\begin{lemma}[Corollary~3.23 of~\cite{MNST}]\label{lem:bounded}  
Let $J \subseteq [n]$ and $y \in \aW$, and let $B$ be a bounded subset of $X$.  Then there is an alcove $d = d_B$ of $\App$ such that the retraction $r_{J,y}:X \to \App$ agrees on $B$ with the retraction $r_d: X \to \App$.
\end{lemma}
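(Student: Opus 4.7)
The plan is to choose a single alcove $d$ in $\App$ that lies ``deep enough'' in the chimney direction so that, for every alcove $c$ of $X$ meeting $B$, there exists an apartment containing both $c$ and $d$ on which the two retractions coincide. The defining property of each retraction — namely, that it restricts to the unique type-preserving isomorphism from such an apartment to $\App$ fixing the intersection pointwise — will then force $r_{J,y}(c) = r_d(c)$.

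First, I would apply Proposition~\ref{prop:J-apartment} to each alcove $c$ of $X$ that meets $B$, producing integers $\{n_\beta^c\}$ and an apartment $\App_{c,(J,y)}$ that contains both $c$ and the $(J,y)$-sector $S^c := S_{J,y}(\{n_\beta^c\})$. Next, using that $B$ is bounded, I would show that there is a single collection $\{n_\beta^*\}$ whose corresponding $(J,y)$-sector $S^* := S_{J,y}(\{n_\beta^*\})$ is contained in every $S^c$ for $c$ meeting $B$. This uses the nested structure of $(J,y)$-sectors from Definition~\ref{def:J-chimney}: replacing each $n_\beta^c$ by something sufficiently far in the chimney direction $w_0$ yields a smaller, contained sector, and boundedness of $B$ bounds how far one must go. I would then pick any alcove $d$ inside $S^*$, so in particular $d \in S^c \subseteq \App_{c,(J,y)}$ for every $c$ meeting $B$.

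For any such $c$, the apartment $\App_{c,(J,y)}$ now contains $c$, $d$, and the sector $S^*$; in particular, $d \in \App_{c,(J,y)} \cap \App$. By Definition~\ref{def:chimney-retraction}, $r_{J,y}(c)$ is the image of $c$ under the unique isomorphism $\App_{c,(J,y)} \to \App$ fixing $\App_{c,(J,y)} \cap \App$ pointwise; but this isomorphism, fixing $d$, is also the isomorphism defining $r_d(c)$, since $r_d$ admits an analogous characterization via any apartment containing both $c$ and $d$. Thus $r_{J,y}(c) = r_d(c)$, and since $B$ is a union of (intersections of) such alcoves and the retractions are simplicial, they agree on all of $B$.

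The main obstacle is the uniform choice of $d$: making precise that a single sector $S^*$ lies inside every apartment $\App_{c,(J,y)}$ provided by Proposition~\ref{prop:J-apartment} as $c$ ranges over alcoves meeting $B$. Since the building need not be locally finite (the cardinalities $N_i$ may be infinite), one cannot simply run a finite intersection argument over all such $c$. Instead, one must argue that for each fixed sector $S'$ sufficiently deep in the chimney direction, \emph{every} alcove $c$ within bounded gallery distance of $\fa$ admits an apartment containing $c$ and $S'$ — a cofinality statement for $(J,y)$-sectors with respect to any single alcove — and then invoke boundedness of $B$ to fix one such $S'$ working uniformly.
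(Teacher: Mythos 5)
First, a point of comparison: the paper does not actually prove this lemma --- it is imported verbatim as Corollary~3.23 of \cite{MNST} --- so there is no in-paper argument to measure your proposal against. On its own terms, your outline follows the natural (and, as far as I can tell, intended) route: produce one alcove $d$ lying in $\App_{c,(J,y)} \cap \App$ for \emph{every} alcove $c$ meeting $B$, and then note that the unique isomorphism $\App_{c,(J,y)} \to \App$ fixing $\App_{c,(J,y)} \cap \App$ pointwise computes both $r_{J,y}(c)$ (by Definition~\ref{def:chimney-retraction}) and $r_d(c)$ (since that apartment contains both $c$ and $d$). That final step is correct, as is the observation that a deeper sector $S_{J,y}(\{m_\beta\})$ with $m_\beta \le n_\beta^c$ for all $\beta$ is contained in $S_{J,y}(\{n_\beta^c\})$ and hence in $\App_{c,(J,y)}$.

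The problem is that the entire content of the lemma beyond Proposition~\ref{prop:J-apartment} is concentrated in the step you defer: showing that the integers $n_\beta^c$ can be chosen uniformly bounded below as $c$ ranges over the alcoves meeting $B$. You correctly diagnose why the naive route fails --- the set of such alcoves may be infinite because $X$ need not be locally finite, so Lemma~\ref{lem:common} cannot be iterated --- and you correctly formulate the cofinality statement that would close the argument. But you then assert it rather than prove it, and Proposition~\ref{prop:J-apartment} as stated gives no control whatsoever on $\{n_\beta^c\}$ in terms of $c$: for all the statement says, the required depth could blow up even over a single ball. Closing this gap requires reopening the construction behind Proposition~\ref{prop:J-apartment} and extracting a quantitative version, e.g.\ that for every alcove $c$ with $d(\fa,c)\le R$ one may take $n_\beta^c \ge -N(R)$ for an explicit function $N$ of $R$ alone (typically proved by induction along a minimal gallery from $c$ toward the chimney, shrinking the sector by a bounded amount at each step). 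Without some such uniform estimate, the proposal reduces the lemma to an unproven claim of essentially the same difficulty, so it does not yet constitute a proof.
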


Since $\star(p)$ is a bounded subset for every panel $p$ of $X$, we may now use Lemmas~\ref{lem:gateRetractHalfApt} and~\ref{lem:bounded} to formulate gates with respect to chimneys in a well-defined manner. In particular, the gate of $p$ with respect to a chimney is independent of the choice of $d$ in the next definition. 

\begin{definition}\label{defn:gateChimney}  
Let $J \subseteq [n]$ and $y \in \aW$, and let $p$ be a panel of $X$.  Let $d$ be an alcove of $X$ such that the retraction $r_{J,y}$ agrees on $\star(p)$ with $r_d$.  The \emph{gate of $p$ with respect to the $(J,y)$-chimney}, denoted by $\gate_{J,y}(p)$, is the alcove $\gate_d(p)$.
\end{definition}

Now, using gates with respect to chimneys, we can give a precise description of the effect of a chimney retraction on a minimal gallery.  The next result generalizes and refines Lemma 2.9 in \cite{Hitzel}, which considers retractions from chambers at infinity.

\begin{prop}\label{prop:retractMinimal}  
Let $J \subseteq [n]$, let $y \in \aW$, and recall that $\phi_{J,y}$ denotes the orientation induced by the $(J,y)$-chimney.  Suppose \[ \gamma = (p_0,c_0, p_1, c_1, \dots c_{l-1}, p_l,c_l,p_{l+1})\] is a minimal gallery in~$X$, and let \[\gamma' = (p_0',c_0', p_1', c_1', \dots c_{l-1}', p_l',c_l',p_{l+1}')\] be image of $\gamma$ under the retraction $r_{J,y}: X \to \App$.
	  
Then $\gamma'=r_{J,y}(\gamma)$ has the same type as $\gamma$, is positively folded with respect to $\phi_{J,y}$, and for all $1 \leq j \leq l$ one has:
\begin{enumerate}
\item if $\gate_{J,y}(p_j) = c_{j-1}$, then $\gamma'$ has a positive crossing at $p_j'$;
\item if $\gate_{J,y}(p_j) = c_j$, then $\gamma'$ has a negative crossing at $p_j'$; and
\item if $\gate_{J,y}(p_j) \not \in \{ c_{j-1}, c_{i} \}$, then $\gamma'$ has a positive fold at $p_j'$.
\end{enumerate}
\end{prop}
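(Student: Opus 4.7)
The fact that $\gamma'$ has the same type as $\gamma$ is immediate, since chimney retractions are type-preserving, as noted just after Definition~\ref{def:chimney-retraction}. The rest of the statement will be proved by analyzing each transition at $p_j$ separately, using Lemmas~\ref{lem:gateRetractHalfApt},~\ref{lem:bounded}, and~\ref{lem:gate}.

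Fix $1 \leq j \leq l$. Since $\star(p_j)$ is bounded, Lemma~\ref{lem:bounded} yields an alcove $d_j$ of $\App$ with $r_{J,y}|_{\star(p_j)} = r_{d_j}|_{\star(p_j)}$, so that $\gate_{J,y}(p_j) = \gate_{d_j}(p_j)$ by Definition~\ref{defn:gateChimney}. Let $p_j' = r_{J,y}(p_j)$ and let $H$ be its supporting hyperplane, with half-apartments $H^-_{d_j} \ni d_j$ and $H^+_{d_j}$. By Lemma~\ref{lem:gateRetractHalfApt} we have $r_{J,y}(\gate_{J,y}(p_j)) = \gate_{d_j}(p_j') \in H^-_{d_j}$, while every other alcove in $\star(p_j)$ retracts to the unique alcove of $p_j'$ lying in $H^+_{d_j}$. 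The next step is to match $H^\pm_{d_j}$ with the chimney orientation: choosing $d_j$ inside a $(J,y)$-sector deep enough that it lies on the chimney side of $H$ (which is possible, since Lemma~\ref{lem:bounded} allows us to move $d_j$ as far as we need), the half-apartment $H^-_{d_j}$ is precisely the one belonging to the $(J,y)$-chimney, and hence by Definition~\ref{def:chimneyOrientation}, $\phi_{J,y}$ assigns $-$ to the alcove of $p_j'$ in $H^-_{d_j}$ and $+$ to the alcove of $p_j'$ in $H^+_{d_j}$.

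With this matching in hand, the three cases are a direct case analysis using the fact that $c_{j-1}\neq c_j$ in the minimal gallery $\gamma$ (Lemma~\ref{lem:gate}). If $\gate_{J,y}(p_j)=c_{j-1}$, then $c_{j-1}'\in H^-_{d_j}$ while $c_j'$ is the distinct alcove in $H^+_{d_j}$, producing a positive crossing; symmetrically, $\gate_{J,y}(p_j)=c_j$ produces a negative crossing. If $\gate_{J,y}(p_j) \notin \{c_{j-1}, c_j\}$, then both $c_{j-1}'$ and $c_j'$ equal the common alcove of $p_j'$ in $H^+_{d_j}$, yielding a fold on the positive side of $p_j'$. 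Since folds in $\gamma'$ occur only in case~(3), where they are always positive, $\gamma'$ is positively folded with respect to $\phi_{J,y}$.

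The main obstacle is the matching step in the previous paragraph, i.e.\ verifying that the half-apartment $H^-_{d_j}$ is the one that actually belongs to the $(J,y)$-chimney. A careful choice of $d_j$ is required: one has to pick a $(J,y)$-sector sufficiently far in the appropriate direction so that it lies in the chimney half-apartment bounded by $H$, which amounts to a direct check against the two families of half-apartments appearing in Definition~\ref{def:J-chimney}, treating the cases $\alpha \in \Phi_J^+$ and $\beta \in \Phi^+\setminus\Phi_J^+$ separately. Once this preparatory observation is established, the rest of the argument is routine.
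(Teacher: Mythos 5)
Your proposal is correct and follows essentially the same route as the paper: reduce to a classical retraction $r_d$ via Lemma~\ref{lem:bounded}, apply Lemma~\ref{lem:gateRetractHalfApt} to locate the images of $\star(p_j)$ relative to the supporting hyperplane, and run the three-way case analysis using $c_{j-1}\neq c_j$ from minimality. The only differences are minor: the paper fixes a single $d=d_\gamma$ for the whole gallery rather than one $d_j$ per panel, and it leaves implicit the ``matching step'' (that the half-apartment of $H$ containing $d$ is precisely the chimney half-apartment, so that $H^{\pm}_d$ align with the signs of $\phi_{J,y}$), which you rightly isolate and justify.
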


\noindent Note that by Lemma~\ref{lem:gate}, since $\gamma$ is minimal, we have $c_{j-1} =  \gate_{c_0}(p_j)$ for $1 \leq j \leq l$.

\begin{proof}  Since the set of simplices in $\gamma$ forms a bounded subset of $X$, by Lemma~\ref{lem:bounded} there is an alcove $d = d_\gamma$ of $\App$ such that, for all $1 \leq j \leq l$, the retraction $r_{J,y}$ agrees on $\star(p_j)$ with $r_{d}$, and hence $\gate_{J,y}(p_j) = \gate_{d}(p_j)$.  Thus by Lemma~\ref{lem:gateRetractHalfApt}, the alcove $r_{d}(\gate_{d}(p_j))$ satisfies $\phi_{J,y}(r_{d}(\gate_{d}(p_j)), p'_j)=-$, while for all $c \in \star(p_j) \setminus \{ \gate_{d}(p_j) \}$, the alcove $r_{d}(c)$ satisfies $\phi_{J,y}(r_d(c), p'_j)=+$.

Since $\gamma$ is minimal, we have that $c_{j-1} \neq c_j$.  Thus at most one of $c_{j-1}$ and $c_j$ can equal $\gate_{d}(p_j)$.  From the previous paragraph, we see that if $c_{j-1} = \gate_{d}(p_j)$ then $\phi_{J,y}(c_{j-1}',p_j') = -$ while $\phi_{J,y}(c_j',p_j') = +$.  Hence $\gamma'$ has a positive crossing at $p_j'$, so (1) holds.  The proof of (2) is similar.  If neither $c_{j-1}$ nor $c_j$ is equal to $\gate_{d}(p_j)$, then $c_{j-1}' = r_{d}(c_{j-1}) = r_{d}(c_j) = c_j'$ and we have $ \phi_{J,y}(c_{j}',p_j') = +$.  Thus $\gamma'$ has a positive fold at $p_j'$, and (3) holds.

The gallery $\gamma'$ has the same type as $\gamma$ since the retraction $r_{J,y}$ is type-preserving, and $\gamma'$ is positively folded with respect to $\phi_{J,y}$ by (1)--(3).  
\end{proof}

We now write $\Gamma$ for the set of all minimal galleries in $X$, and $\Gamma_{J,y}^+$ for the set of all galleries in $\App$ which have the type of a minimal gallery and are positively folded with respect to the $(J,y)$-chimney.  Using the results of this section, we may define a ``folding function" from $\Gamma$ to $\Gamma_{J,y}^+$, as follows.

\begin{corollary}\label{cor:folding}  
The retraction $r_{J,y}: X \to \App$ induces a map 
\[
\cF_{J,y}: \Gamma \to \Gamma_{J,y}^+
\]
such that for each $\gamma = (p_0,c_0, p_1, \dots, c_l, p_{l+1})\in \Gamma$ and each $1 \leq j \leq l$, the gallery $\cF_{J,y}(\gamma)$:
\begin{enumerate}
\item has a positive crossing at $p_j'$, if $\gate_{J,y}(p_j) = c_{j-1}$;
\item has a negative crossing at $p_j'$, if $\gate_{J,y}(p_j) = c_j$; and
\item has a positive fold at $p_j'$, if $\gate_{J,y}(p_j) \not \in \{ c_{j-1}, c_j\}$.
\end{enumerate}
\end{corollary}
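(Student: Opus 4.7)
The plan is to define $\cF_{J,y}$ as the map which sends a minimal gallery $\gamma \in \Gamma$ to its image $r_{J,y}(\gamma)$ under the chimney retraction, applied simplex-by-simplex. The bulk of the work has already been done in Proposition~\ref{prop:retractMinimal}, so this corollary is essentially a repackaging of that result, and my proof will consist of verifying that $\cF_{J,y}$ is well-defined with codomain $\Gamma_{J,y}^+$ and then checking that the three listed properties follow.

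First, I would observe that for any $\gamma = (p_0, c_0, p_1, c_1, \dots, c_l, p_{l+1}) \in \Gamma$, the image $r_{J,y}(\gamma)$ is a sequence $(p_0', c_0', p_1', c_1', \dots, c_l', p_{l+1}')$ of simplices in $\App$, where each $p_j' = r_{J,y}(p_j)$ and $c_j' = r_{J,y}(c_j)$. Since $r_{J,y}$ is a simplicial and type-preserving map, each $p_j'$ remains a panel between $c_{j-1}'$ and $c_j'$ (which may now coincide, i.e., yield a fold), so $r_{J,y}(\gamma)$ is a well-formed gallery in $\App$ of the same type as $\gamma$. In particular, $\type(r_{J,y}(\gamma))$ is the type of a minimal gallery, so $r_{J,y}(\gamma)$ is a candidate element of $\Gamma_{J,y}^+$.

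Next, I would invoke Proposition~\ref{prop:retractMinimal} directly. That proposition tells us that $r_{J,y}(\gamma)$ is positively folded with respect to $\phi_{J,y}$, which is precisely the condition required for membership in $\Gamma_{J,y}^+$. Hence setting $\cF_{J,y}(\gamma) := r_{J,y}(\gamma)$ defines a map $\cF_{J,y}: \Gamma \to \Gamma_{J,y}^+$. Finally, properties (1), (2), and (3) in the statement of the corollary are exactly properties (1), (2), and (3) of Proposition~\ref{prop:retractMinimal}, so there is nothing additional to verify.

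There is no real obstacle here, as the corollary is essentially a restatement of Proposition~\ref{prop:retractMinimal} in functional language; the only point requiring care is confirming that the codomain is correctly identified, i.e., that applying the retraction to a minimal gallery always yields a gallery whose \emph{type} is still that of a minimal gallery (not that the image itself is minimal, which is false in general when folds occur). This is immediate from the type-preserving property of $r_{J,y}$.
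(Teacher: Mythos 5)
Your proposal is correct and matches the paper's intent exactly: the paper offers no separate proof of this corollary, treating it as an immediate repackaging of Proposition~\ref{prop:retractMinimal} via $\cF_{J,y}(\gamma) := r_{J,y}(\gamma)$. Your extra remark that the image lies in $\Gamma_{J,y}^+$ because the retraction is type-preserving (so the image has the \emph{type} of a minimal gallery, even though it need not be minimal) is the right point to check and is the only content beyond the proposition itself.
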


\noindent We note that, in general, the function $\cF_{J,y}$ is far from injective.


\subsection{Labelings and unfoldings}\label{sec:LFGs}

We have seen in Section~\ref{sec:retractMin} that applying a chimney retraction to a minimal gallery results in a positively folded gallery.  We now explain how to assign ``labels" to certain panels of positively folded galleries, so that each set of such labels is induced by a unique minimal gallery in its chimney retraction preimage.  We first give a careful formulation of the labelings that we will need, and then use these to construct a map from certain minimal galleries to certain labeled folded galleries, in Definition~\ref{def:label-fold-map}.  We prove in Proposition~\ref{prop:unfold} that any labeled folded gallery has a canonical ``unfolding" to a minimal gallery, and we deduce in Corollary~\ref{cor:bijection} that certain sets of minimal galleries and labeled folded galleries are in bijection.

We continue all notation from Section~\ref{sec:retractMin}, and for the remainder of this section, we fix $J \subseteq [n]$ and $y \in \aW$. We assume additionally from now on that $X$ is thick and panel-regular, as defined in Section~\ref{sec:buildings-prelim}.   For each $i \in [\overline{n}]$, let $q_i$ denote the unique panel of $\fa$ of type $i$. 

\begin{definition}\label{defn:labelings}  
Let $d$ be an alcove of $\App$.  A \emph{$(J,y,d)$-labeling} of $X$ consists of:
\begin{enumerate}
\item  For each $i \in [\overline{n}]$, we fix a \emph{labeling set} $\Lambda_i$ of cardinality $N_i = | \star(q_i) \setminus \{ \fa \}| \geq 2$.
\item  For each panel $p$ of $X$ of type $i$ we define the \emph{labeling set} $\Lambda_p$ to be equal to~$\Lambda_i$, and we fix a \emph{labeling bijection} 
\[ \Labl_{p,d}: \star(p) \setminus \{ \gate_d(p) \} \to \Lambda_p.\] 
\end{enumerate}
If $d = \fa$, then we simplify notation by referring to a \emph{$(J,y)$-labeling} instead of a $(J,y, \fa)$-labeling, and by defining $\Labl_p \define  \Labl_{p,\fa}$. 
\end{definition}

\noindent  
We note that, by construction, the labeling sets $\Lambda_p$ are independent of $J$, $y$ and $d$ (as is suggested by our notation).  However, the domain of the labeling bijection $\Labl_{p,d}$ depends on both $p$ and $d$.

\begin{definition} 
For each panel $p$ of $X$ and each alcove 
 $c \in \star(p) \setminus \{ \gate_d(p) \}$, we call $\Labl_{p,d}(c) \in \Lambda_p$ the \emph{local label of $c$ at $p$}. 
\end{definition}

\noindent We remark that if $p$ and $p'$ are distinct panels of a given alcove $c$, the local labels of $c$ at $p$ and at $p'$ are not required to be the same.  Indeed, as such panels $p$ and $p'$ necessarily have distinct types, the labeling sets $\Lambda_{p}$ and $\Lambda_{p'}$ 
may be distinct, and in general these two labeling sets will not even have the same cardinality.  

We now generalize the definitions of labeled folded alcove walks from \cite{PRS} and \cite{MNST}.  We discuss the relationship between our definitions and these earlier ones in Remark~\ref{rem:comparisonLF}.

\begin{definition}\label{def:labeledGalleries}  Let $c_0$ be an alcove of $\App$, and fix a $(J,y,c_0)$-labeling of $X$.  Let $\gamma = (p_0,c_0, p_1, c_1, \dots c_{l-1}, p_l,c_l,p_{l+1})$ be a gallery in $\App$, and denote by $\cP_\gamma$ the set of panels $p_j$ at which $\gamma$ has either a positive fold or a positive crossing with respect to $\phi_{J,y}$. Let  $\Labl_\gamma$ be a map that assigns to every panel $p_j$ in $\cP_\gamma$ an element $\Labl_\gamma(p_j)\in \Lambda_{p_j}$. 

We say that the pair $(\gamma, \Labl_\gamma)$ is a \emph{$(J,y,c_0)$-labeled folded gallery} if:
	\begin{enumerate}
		\item $\gamma$ has the type of a minimal gallery;
		\item $\gamma$ is positively folded with respect to $\phi_{J,y}$; and
		\item if $\gamma$ has a positive fold at $p_j$, then 
		\[\Labl_\gamma(p_j)\in\Lambda_{p_j}\setminus\{ \Labl_{p_j,c_0}(\gate_{J,y}(p_j))\}, \]
		where $\Labl_{p_j,c_0}$ is the labeling bijection from Definition~\ref{defn:labelings}. 
	\end{enumerate}
If $c_0 = \fa$, we will say that $(\gamma,\Labl_\gamma)$ is a \emph{$(J,y)$-labeled folded gallery}.  When $J$, $y$, and $c_0$ are clear, we may refer to $(\gamma, \Labl_\gamma)$ as just a \emph{labeled folded gallery}. We may also suppress the map $\Labl_\gamma$ and speak of a labeled folded gallery $\gamma$. 
\end{definition}

For any alcove $c_0$ of $\App$, we now denote by $\LG_{J,y,c_0}^+$ the set of all $(J,y,c_0)$-labeled folded galleries; note that such galleries necessarily have first alcove $c_0$.  As we will show, such labeled folded galleries will naturally arise as images of minimal galleries in $X$ under retractions, and will turn out to have unique lifts to $X$.  Note that the label $\Labl_\gamma(p_j)$ of a panel $p_j$ with a positive crossing is an element of $\Lambda_{p_j}$, while the label of a panel $p_j$ with a positive fold avoids the label of the gate of $p_j$ with respect to the $(J,y)$-chimney, by item (3) of the definition.  The reason for not labeling the panels at which $\gamma$ has negative crossings is that, as will be seen in the proof of Proposition~\ref{prop:unfold} below, there will be no choice in how to ``lift" such crossings.

\begin{remark}\label{rem:comparisonLF}  
We wish to emphasize the distinction between the labelings on positively folded galleries from \cite{PRS}, which are also used in the authors' subsequent work \cite{MNST,MST2}, and the labeling scheme from Definition \ref{def:labeledGalleries}.  Let $G$ be a connected reductive group over $F = \kk((t))$, where $\kk$ is any field. Then $G(F)$ has an associated affine building, namely its Bruhat--Tits building.  In this special case, the labels on a positively folded gallery are given by individual elements of the residue field $\kk$ of $F$ (resp.~its invertible elements~$\kk^{\times}$) at every positive crossing (resp.~fold), and Definition~\ref{def:labeledGalleries} recovers the labeling from \cite{PRS}; compare \cite[Def.~5.9]{MNST}.

If $U_{\alpha,j}$ denotes the subgroup of $G(F)$ which fixes the half-apartment $\alpha^{j,\id}$, for $\alpha \in \Phi$ and $j \in \Z$, then the quotient $\overline{U}_{\alpha, j} = U_{\alpha,j} / U_{\alpha, j+1}$ can be naturally identified with a one-parameter ``root subgroup'' of $G(F)$ which is isomorphic to the additive group~$(\kk,+)$.  In this function field setting, the entire group $G(F)$ is in fact generated by its subgroups~$\overline{U}_{\alpha,j}$, and images of labeled folded galleries under chimney retractions can thus be encoded purely algebraically using relations in $G(F)$ involving multiplication by elements of~$\overline{U}_{\alpha,j}$.  Due to this direct correspondence between labels on galleries and the algebra in the group $G(F)$, it is not required in \cite{MNST}, for example, to view labeled folded galleries geometrically as images of chimney retractions, as we do below in Definition~\ref{def:label-fold-map}.

In our more general setting of arbitrary affine buildings, the labeling set $\Lambda_i$ is any set in bijection with $\star(q_i) \setminus \{ \fa \}$, and more significantly, the analogous one-parameter ``root subgroups'' with elements in bijection with labeling sets are no longer guaranteed.
   For instance, the group $\SL_n(\Q_p)$ has Bruhat--Tits building in which each panel is contained in $p+1$ alcoves.   So we could, if we like, use the elements of $\F_p$ (resp.~$\F_p^\times$) to label positive crossings (resp. folds).  The subgroups $U_{\alpha,j}$ of $\SL_n(\Q_p)$ can still be defined as fixators of half-apartments, as in the function field case, and their quotients $\overline{U}_{\alpha,j}$ are each isomorphic to the additive group $(\F_p,+)$.  However, the key difference is that~$\overline{U}_{\alpha,j}$ cannot be identified with any subgroup of~$\SL_n(\Q_p)$.  Thus, out of necessity, the bijections we obtain between our more general labeled folded galleries and preimages of chimney retractions have purely geometric proofs, which are decidedly different from those appearing in any of the previously mentioned works.  Compare the algebraic proof of \cite[Thm.~5.11]{MNST} to the geometric proof of the analogous Theorem \ref{thm:chimneyLFGs} in Section \ref{sec:chimneysLFGs}, for example.
\end{remark}

Now recall from Corollary~\ref{cor:folding} that the retraction $r_{J,y}:X \to \App$ induces a function $\cF_{J,y}: \G \to \G^+_{J,y}$ from minimal galleries in $X$ to positively folded galleries in $\App$.    For any alcove $c_0'$ of $X$, denote by $\G_{c_0'}$ the set of all minimal galleries with first alcove~$c_0'$, and for any alcove $c_0$ of $\App$, define
\[
\widetilde\G_{c_0}:= \bigcup_{c_0'\in r_{J,y}^{-1}(c_0)}\G_{c_0'}.
\]
We now enhance $\cF_{J,y}$ to a ``labeling and folding"\, function.

\begin{definition}\label{def:label-fold-map} For any alcove $c_0$ of $\App$, define a map
	\[
	\LablF_{J,y,c_0}: \widetilde\G_{c_0} \to \LG^+_{J,y,c_0}
	\]
	which assigns to a gallery $\gamma' \in \widetilde\G_{c_0}$ a labeled folded gallery $(\gamma, \Labl_{\gamma}) \in \LG^+_{J,y,c_0}$ as follows.
	For every $c_0'\in r^{-1}_{J,y}(c_0)$ and every $\gamma' = (p_0', c_0', p_1',\dots,c_l',p_{l+1}') \in \G_{c_0'}$, define 
	\[\gamma\define\cF_{J,y}(\gamma')\] and for $1 \leq j \leq l$, if $\gamma$ has a positive crossing or a positive fold at $p_j$, put 
	\[ \Labl_{\gamma}(p_j)\define\Labl_{p_j',c_0'}(c_j').\]
\end{definition}

\noindent By Corollary~\ref{cor:folding} and Definition~\ref{defn:labelings}, the map $\Labl_{\gamma'}$ satisfies conditions (1)--(3) in Definition~\ref{def:label-fold-map}, and the labeling and folding map $\LablF_{J,y,c_0}$ is hence well-defined. 

Our next goal is to show that for any $c_0' \in r_{J,y}^{-1}(c_0)$, the restriction of $\LablF_{J,y,c_0}$ to $\G_{c_0'}$ is a bijection.  For this, we will describe precisely how labeled folded galleries in $\App$ may be ``unfolded" to minimal galleries in the thick, panel-regular affine building $X$.  The next result generalizes and refines Proposition 3.3 of~\cite{Hitzel}, which shows that any positively folded gallery has some minimal preimage with the same first alcove,  under a retraction from a chamber at infinity.

\begin{prop}\label{prop:unfold}  
Let  $J\subseteq [{n}]$ and $y \in \aW$, and let $c_0$ be an alcove of $\App$. Fix a $(J,y,c_0)$-labeling on $X$ and let $(\gamma, \Labl_\gamma)$ be a $(J,y,c_0)$-labeled folded gallery in $\App$  with 
\[\gamma = (p_0,c_0, p_1,c_1, \dots, c_{l-1}, p_l,c_l,p_{l+1}).\]
Then for every $c_0'\in r^{-1}_{J,y}(c_0)$ there is a unique gallery 
\[\gamma' = (p_0',c_0', p_1', c_1',\dots, c_{l-1}', p_l',c_l',p_{l+1}')\] 
in $X$ such that:
\begin{enumerate}
\item $\gamma'$ is minimal;
\item $r_{J,y}(\gamma') = \gamma$;
\item if $\gamma$ has a positive crossing at $p_j$, then $\Labl_{p_j',c_0'}(c_j') = \Labl_\gamma(p_j) \in \Lambda_{p_j}$ and $c_j' \neq \gate_{c_0'}(p_j')$; and
\item if $\gamma$ has a positive fold at $p_j$, then $\Labl_{p_j',c_0'}(c_j')= \Labl_\gamma(p_j) \in \Lambda_{p_j}\setminus\{\Labl_{p_j',c_0'}(\gate_{J,y}(p_j'))\}$ and $c_j' \not \in \{ \gate_{c_0'}(p_j'), \gate_{J,y}(p_j')\}$.
\end{enumerate}
\end{prop}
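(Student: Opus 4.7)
The plan is to proceed by induction on the length $l$ of $\gamma$, at each step uniquely determining the panel $p_l'$ and then the alcove $c_l'$. The base case $l=0$ is trivial, since the faces $p_0'$ and $p_{l+1}'$ of $c_0'$ are determined by the simplicial isomorphism $r_{J,y}|_{c_0'} \colon c_0' \to c_0$. For the inductive step, assume a unique minimal lift $\gamma'_{l-1}$ of the initial subgallery, ending at $c_{l-1}'$, has been constructed; then $p_l'$ is the unique panel of $c_{l-1}'$ mapping to $p_l$ under $r_{J,y}$.

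Before choosing $c_l'$, I would establish the auxiliary claim that $c_{l-1}' = \gate_{c_0'}(p_l')$. This identity is needed both so that $c_{l-1}'$ is precisely the excluded alcove in the domain $\star(p_l') \setminus \{\gate_{c_0'}(p_l')\}$ of the labeling bijection $\Labl_{p_l', c_0'}$, and so that the extended gallery remains minimal by Lemma~\ref{lem:gate}. Since $\gamma$ has the type $(s_{i_1}, \ldots, s_{i_l})$ of a minimal gallery in $\App$, this word is reduced, and the claim follows from the standard fact that $W$-distances multiply along reduced prefixes in any building: every alcove of $\star(p_l') \setminus \{c_{l-1}'\}$ lies at distance $l$ from $c_0'$, while $c_{l-1}'$ lies at distance $l-1$.

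To determine $c_l'$, I would apply Lemma~\ref{lem:bounded} to produce an alcove $d$ of $\App$ with $r_{J,y} = r_d$ on $\star(p_l')$, so that Lemma~\ref{lem:gateRetractHalfApt} identifies $\gate_{J,y}(p_l')$ as the unique preimage in $\star(p_l')$ of the chimney-side alcove of $p_l$ in $\App$. The construction then splits into three cases, mirroring Proposition~\ref{prop:retractMinimal} read in reverse. For a \emph{positive crossing at $p_l$}, the chimney orientation places $c_{l-1}$ on the chimney side, forcing $c_{l-1}' = \gate_{J,y}(p_l')$; any $c_l' \in \star(p_l') \setminus \{c_{l-1}'\}$ then has $r_{J,y}(c_l') = c_l$, and the label $\Labl_\gamma(p_l)$ selects $c_l'$ uniquely via $\Labl_{p_l', c_0'}$. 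For a \emph{negative crossing}, symmetrically $c_l' = \gate_{J,y}(p_l')$ is forced uniquely, and $c_l' \neq c_{l-1}'$ follows because $c_{l-1}' \neq \gate_{J,y}(p_l')$. For a \emph{positive fold}, $c_l = c_{l-1}$ lies on the non-chimney side, so $c_l'$ must avoid both $\gate_{J,y}(p_l')$ and $\gate_{c_0'}(p_l') = c_{l-1}'$; the positive fold condition combined with the auxiliary claim yields $\gate_{c_0'}(p_l') \neq \gate_{J,y}(p_l')$, so the restriction of $\Labl_{p_l',c_0'}$ to $\star(p_l') \setminus \{\gate_{c_0'}(p_l'), \gate_{J,y}(p_l')\}$ is a bijection onto $\Lambda_{p_l'} \setminus \{\Labl_{p_l',c_0'}(\gate_{J,y}(p_l'))\}$, and condition~(3) of Definition~\ref{def:labeledGalleries} places $\Labl_\gamma(p_l)$ in this target set, selecting $c_l'$ uniquely.

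The main obstacle is the auxiliary claim $c_{l-1}' = \gate_{c_0'}(p_l')$, which underlies both the applicability of the labeling bijection and the inductive extension of minimality. Once this is in hand, the remaining verifications---that $r_{J,y}(c_l') = c_l$ in all three cases, that $c_l' \neq c_{l-1}'$, and that conditions~(3) and~(4) are preserved---follow routinely from Lemmas~\ref{lem:gateRetractHalfApt} and~\ref{lem:gate}.
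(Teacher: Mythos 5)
Your proposal is correct and follows essentially the same route as the paper's proof: induction on length, the forced choice of $p_l'$, the key identity $c_{l-1}'=\gate_{c_0'}(p_l')$ obtained from the reduced type via Lemma~\ref{lem:gate}, and the three-case analysis (negative crossing forced to the gate $\gate_{J,y}(p_l')$, positive crossing and positive fold selected uniquely by the labeling bijection after excluding the appropriate gates, with $\gate_{c_0'}(p_l')\neq\gate_{J,y}(p_l')$ in the fold case). The only cosmetic difference is that the paper works out the length-one case in full and then declares the inductive step analogous, whereas you run the general inductive step directly.
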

\begin{proof}  By definition of labeled folded galleries, $\gamma$ has the type of a minimal gallery.  Since $p_0$ is a face of $c_0$ and $p_{l+1}$ is a face of $c_l$, it thus suffices to consider $\gamma$ of the form $\gamma = (c_0, p_1, \dots, c_{l-1},p_l,c_l)$.  

We now induct on the length of $\gamma$.  Consider a labeled folded gallery $\gamma = (c_0, p_1, c_1)$, and choose $c_0' \in r_{J,y}^{-1}(c_0)$.  Define $p_1'$ to be the panel of $c_0'$ of the same type as $p_1$; this is a necessary condition for $r_{J,y}(\gamma') = \gamma$.  Now as $\gate_{c_0'}(p_1') = c_0'$, by Lemma~\ref{lem:gate} any gallery $(c_0', p_1', c_1')$ where $c_1' \neq c_0'$ will be minimal.  

Suppose first that $\gamma$ has a negative crossing at $p_1$.  Then $c_1 \neq c_0$, and we have $c_1 = \gate_{J,y}(p_1)$.  Define $c_1' = \gate_{J,y}(p_1')$.  Then $r_{J,y}(c_1') = \gate_{J,y}(p_1) = c_1$, and so $r_{J,y}(\gamma') = \gamma$ for $\gamma' = (c_0', p_1', c_1')$.  If $c_1' = c_0'$ then $c_1 = r_{J,y}(c_0') = c_0$, a contradiction.  Thus $\gamma'$ is minimal.  Properties (3) and (4) hold trivially in this case.  For uniqueness, we observe that if $\gamma'' = (c_0',p_1', c_1'')$ is minimal and $c_1'' \neq c_1'$, then $c_1'' \neq \gate_{J,y}(p_1')$.  But then $r_{J,y}(c_1'') \neq c_1$, and so $r_{J,y}(\gamma'') \neq \gamma$.  Hence $\gamma'$ is unique.

Next, suppose $\gamma$ has a positive crossing at $p_1$.  Note that  $c_1 \neq c_0$ in this case too.  Let $c_1'$ be the unique element of $\star(p_1') \setminus \{ \gate_{c_0'}(p_1') \} = \star(p_1') \setminus \{ c_0' \}$ such that $ \Labl_{p_1'}(c_1') = \Labl_\gamma(p_1)$ (it is possible that $c_1' = c_1$ here, if $c_0' = c_0$).   Then $\gamma' = (c_0', p_1', c_1')$ is minimal and property (3) holds.  Now since $\gamma$ has a positive crossing at $p_1$, we have $\gate_{c_0}(p_1) = \gate_{J,y}(p_1) = c_0$.  As $c_1 \neq c_0$ and $c_1' \neq c_0'$, it follows that $r_{J,y}(c_1') = c_1$.  Hence $r_{J,y}(\gamma') = \gamma$.  Property (4) holds trivially in this case, and the uniqueness of $\gamma'$ follows from the unique choice of $c_1'$ (and of $p_1'$).

Now suppose $\gamma$ has a positive fold at $p_1$.  Then $c_1 = c_0 = \gate_{c_0}(p_1)$ and $\gate_{J,y}(p_1) \neq c_0$, and we have $c_0' = \gate_{c_0'}(p_1')$.  If $\gate_{J,y}(p_1') = \gate_{c_0'}(p_1')$, then $r_{J,y}(c_0') = c_0 = \gate_{J,y}(p_1)$, a contradiction.  Hence $\gate_{J,y}(p_1') \neq \gate_{c_0'}(p_1')$.   So we may let $c_1'$ be the unique element of $\star(p_1') \setminus \{ \gate_{c_0'}(p_1'), \gate_{J,y}(p_1') \}$ such that $\Labl_{p_1'}(c_1') = \Labl_\gamma(p_1)$ (note that $c_1' \neq c_1$ here, even if $c_0' = c_0$).  Then $\gamma' = (c_0', p_1', c_1')$ is minimal.  Since $c_1' \neq \gate_{J,y}(p_1')$ and $c_1 \neq \gate_{J,y}(p_1)$, we get that $r_{J,y}(c_1') = c_1$.  Thus $r_{J,y}(\gamma') = \gamma$.  Property (3) holds trivially in this case, and the uniqueness of $\gamma'$ follows from the unique choice of $c_1'$ (and of $p_1'$).

We now consider $\gamma$ of length $l > 1$.  By induction, given $c_0' \in r_{J,y}^{-1}(c_0)
$, there is a unique gallery $\gamma'_{l-1} = (c_0', p_1', c_1', \dots, c_{l-1}')$ such that $\gamma'_{l-1}$ is minimal, $r_{J,y}(\gamma'_{l-1}) = (c_0, p_1, c_1, \dots, c_{l-1})$, and (3) and (4) hold for $1 \leq j \leq l-1$.  Define $p_l'$ to be the panel of $c_{l-1}'$ of the same type as $p_l$.  Then since $\gamma$ has the type of a minimal gallery, there is an alcove $c_l''$ containing $p_l'$ such that the gallery $(c_0', p_1', c_1', \dots, c_{l-1}',p_l',c_l'')$ is minimal.  Hence by Lemma~\ref{lem:gate}, $c_{l-1}' = \gate_{c_0'}(p_l')$, and any gallery $\gamma' = (c_0', p_1', c_1', \dots, c_{l-1}',p_l',c_l')$ with $c_l' \neq c_{l-1}'$ will be minimal.  Moreover, to see that $r_{J,y}(\gamma') = \gamma$, it suffices to verify that $r_{J,y}(c_l') = c_l$.  We may now in each case construct a unique $c_l' \in \star(p_l') \setminus \{ c_{l-1}'\}$ such that $r_{J,y}(c_l') = c_l$ and (3) and (4) hold, using similar arguments to those for $l = 1$. This completes the proof.
\end{proof}

Recall that $\G_{c_0'}$ is the set of all minimal galleries in $X$ with first alcove $c_0'$, and $\LG^+_{J,y,c_0}$ is the set of all $(J,y,c_0)$-labeled folded galleries (which necessarily have first alcove $c_0$).
Let $\Labl\cF_{J,y,c_0,c_0'}$ denote the restriction of the map $\Labl\cF_{J,y,c_0}:\tilde \G_{c_0} \to \LG^+_{J,y,c_0}$ (from Definition~\ref{def:label-fold-map}) to the set $\G_{c_0'}$.

\begin{corollary}\label{cor:bijection}
For any alcove $c_0$ of $\App$ and any $c_0' \in r_{J,y}^{-1}(c_0)$, the function \[\Labl\cF_{J,y,c_0,c_0'}: \G_{c_0'} \to \LG^+_{J,y,c_0}\] is a bijection. 
\end{corollary}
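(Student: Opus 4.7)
The plan is to use Proposition~\ref{prop:unfold} to produce the inverse of $\LablF_{J,y,c_0,c_0'}$ explicitly, and then verify the two compositions are identities by combining existence and uniqueness from that proposition with the definition of the labeling and folding map (Definition~\ref{def:label-fold-map}).

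First, fix $c_0' \in r_{J,y}^{-1}(c_0)$ and a $(J,y,c_0)$-labeling of $X$. Define a candidate inverse
\[
\cU_{c_0'} : \LG^+_{J,y,c_0} \to \G_{c_0'}
\]
by sending $(\gamma, \Labl_\gamma)$ to the unique minimal gallery $\gamma' \in \G_{c_0'}$ provided by Proposition~\ref{prop:unfold} applied to $c_0'$. That the output genuinely lies in $\G_{c_0'}$ is immediate from conclusions (1) of Proposition~\ref{prop:unfold} together with the fact that $\gamma'$ has first alcove $c_0'$ by construction.

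Next I would check that $\LablF_{J,y,c_0,c_0'} \circ \cU_{c_0'}$ is the identity on $\LG^+_{J,y,c_0}$. Take $(\gamma, \Labl_\gamma) \in \LG^+_{J,y,c_0}$ and let $\gamma' = \cU_{c_0'}(\gamma, \Labl_\gamma)$. By property (2) of Proposition~\ref{prop:unfold}, $r_{J,y}(\gamma') = \gamma$, so $\cF_{J,y}(\gamma') = \gamma$ by Corollary~\ref{cor:folding}. It remains to check that the labeling map $\Labl_{\gamma'}$ produced by Definition~\ref{def:label-fold-map} agrees with $\Labl_\gamma$ on every panel of positive crossing or positive fold; but Definition~\ref{def:label-fold-map} sets $\Labl_{\gamma'}(p_j) = \Labl_{p_j', c_0'}(c_j')$, and Proposition~\ref{prop:unfold}(3)--(4) is precisely the statement that $\Labl_{p_j', c_0'}(c_j') = \Labl_\gamma(p_j)$ in each of those cases.

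For the other composition $\cU_{c_0'} \circ \LablF_{J,y,c_0,c_0'}$, let $\gamma' \in \G_{c_0'}$ and set $(\gamma, \Labl_\gamma) \define \LablF_{J,y,c_0,c_0'}(\gamma')$. I would then verify that $\gamma'$ itself satisfies conclusions (1)--(4) of Proposition~\ref{prop:unfold} for the labeled folded gallery $(\gamma, \Labl_\gamma)$: minimality is given; $r_{J,y}(\gamma') = \gamma$ holds by construction of $\cF_{J,y}$; and the label conditions (3) and (4) are built into Definition~\ref{def:label-fold-map}, with the non-gate conditions on $c_j'$ coming from Lemma~\ref{lem:gate} and the description of positive crossings/folds in Corollary~\ref{cor:folding}. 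The uniqueness clause of Proposition~\ref{prop:unfold} then forces $\cU_{c_0'}(\gamma, \Labl_\gamma) = \gamma'$, completing the proof.

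The statement is essentially a bookkeeping consequence of Proposition~\ref{prop:unfold}, so there is no real obstacle beyond matching definitions carefully; the only point requiring attention is confirming that the label produced by $\LablF_{J,y,c_0,c_0'}$ at a positively folded panel indeed lies in $\Lambda_{p_j} \setminus \{\Labl_{p_j',c_0'}(\gate_{J,y}(p_j'))\}$, which follows because at such a positive fold the lifted alcove $c_j'$ is necessarily distinct from $\gate_{J,y}(p_j')$ (as shown in the proof of Proposition~\ref{prop:unfold}), so that the two local labels differ under the bijection $\Labl_{p_j',c_0'}$.
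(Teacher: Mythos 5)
Your proposal is correct and takes essentially the same approach as the paper: the paper's proof simply defines the unfolding function $\cU_{J,y,c_0,c_0'}$ via Proposition~\ref{prop:unfold} and declares it ``straightforward to verify'' that it is a two-sided inverse, which is exactly the verification you carry out in detail. Your careful checks of both compositions, including the point that at a positive fold the lifted alcove is distinct from $\gate_{J,y}(p_j')$ so the label lands in the required subset, correctly fill in what the paper leaves implicit.
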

\begin{proof}  The procedure described in Proposition~\ref{prop:unfold} provides an ``unfolding" function, say $\cU_{J,y,c_0,c_0'}:\LG^+_{J,y,c_0} \to \G_{c_0'}$.   It is then straightforward to verify that $\cU_{J,y,c_0,c_0'}$ is a (two-sided) inverse to $\Labl\cF_{J,y,c_0,c_0'}$.
\end{proof}

If $c_0 = c_0' = \fa$, we write $\Labl\cF_{J,y}$ for $\Labl\cF_{J,y,\fa,\fa}$ and $\LG^+_{J,y}$ for $\LG^+_{J,y,\fa}$, and we have the following important special case.

\begin{corollary}\label{cor:bijection_fa} 
The function $\Labl\cF_{J,y}: \G_\fa \to \LG^+_{J,y}$ is a bijection.
\end{corollary}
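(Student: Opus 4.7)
The plan is to observe that this corollary is simply the case $c_0 = c_0' = \fa$ of Corollary~\ref{cor:bijection}, so the only thing that needs to be justified is that the base alcove $\fa$ actually lies in $r_{J,y}^{-1}(\fa)$, and then the conclusion is immediate after unwinding the notational abbreviations $\LablF_{J,y} = \LablF_{J,y,\fa,\fa}$ and $\LG^+_{J,y} = \LG^+_{J,y,\fa}$.

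First I would verify that $r_{J,y}(\fa) = \fa$. By Proposition~\ref{prop:J-apartment}, for $c = \fa$ we may take $\App_{c,(J,y)} = \App$ itself, since the standard apartment $\App$ contains both $\fa$ and any $(J,y)$-sector (indeed, $(J,y)$-sectors are defined as subcomplexes of $\App$ in Definitions~\ref{def:J-chimney} and \ref{def:Jy-chimney}). Then the unique isomorphism $\App_{\fa,(J,y)} \to \App$ fixing $\App_{\fa,(J,y)} \cap \App = \App$ pointwise is the identity, so $r_{J,y}(\fa) = \fa$, and hence $\fa \in r_{J,y}^{-1}(\fa)$.

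With this in hand, applying Corollary~\ref{cor:bijection} with $c_0 = c_0' = \fa$ yields a bijection
\[
\LablF_{J,y,\fa,\fa} \colon \G_\fa \to \LG^+_{J,y,\fa},
\]
which under the stated notational conventions is precisely the map $\LablF_{J,y} \colon \G_\fa \to \LG^+_{J,y}$.

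There is no real obstacle here; the corollary is a bookkeeping statement that singles out the case used most often in applications, where minimal galleries start at the fundamental alcove and retract to $(J,y)$-labeled folded galleries also based at $\fa$. The only nontrivial input needed is that a chimney retraction fixes the standard apartment (and in particular $\fa$) pointwise, and this falls out immediately from the defining property of $r_{J,y}$.
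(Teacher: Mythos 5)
Your proposal is correct and matches the paper's (implicit) argument: the corollary is stated as the immediate special case $c_0 = c_0' = \fa$ of Corollary~\ref{cor:bijection}, with no further proof given. Your extra check that $\fa \in r_{J,y}^{-1}(\fa)$ (because $r_{J,y}$ fixes $\App$ pointwise) is a sensible bit of diligence that the paper leaves unstated, but it does not change the route.
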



\subsection{Retractions and labeled folded galleries}\label{sec:chimneysLFGs}

We now prove our main geometric result, Theorem~\ref{thm:chimneyLFGs}, which is restated below.   Recall that $r:X \to \App$ is the retraction onto $\App$ centered at the base alcove $\fa$.

\chimneyLFGsRestate*

\begin{proof}
	Fix a minimal gallery $\gamma_x$ of type $\vec{x}$ from $\fa$ to $\x$ in the base apartment $\App$.  By definition, the preimage $r^{-1}(\gamma_x)$ is the set, which we denote by $\G_\fa(\gamma_x)$, of all minimal galleries in $X$ with first alcove $\fa$ and the same type as $\gamma_x$.  Therefore, the set $r^{-1}(\x) \cap r^{-1}_{J,y}(\z)$ is in bijection with the set of all galleries in $\G_\fa(\gamma_x)$ whose final alcove is mapped onto $\z$ by $r_{J,y}$; denote this set by $\G_\fa(\gamma_x,\z')$.  
	
	Now write $\LG_{J,y}^+(\gamma_x,\z)$ for the set of all $(J,y)$-labeled folded galleries of type $\gamma_x$ with final alcove $\z$.  By definition of the bijection $\Labl\cF_{J,y}: \G_\fa \to \LG_{J,y}^+$ from Corollary~\ref{cor:bijection_fa}, any $\gamma' \in \G_\fa$ with final alcove $c_l'$ is mapped to an element of $\LG_{J,y}^+$ with final alcove $r_{J,y}(c_l')$.  Therefore $\Labl\cF_{J,y}$ restricts to an injective map from $\G_\fa(\gamma_x,\z')$ into $\LG_{J,y}^+(\gamma_x,\z)$.  Moreover, given any $\gamma \in \LG_{J,y}^+(\gamma_x,\z)$, the unique preimage $\gamma'$ of $\gamma$ under the bijection $\Labl\cF_{J,y}$ also has type $\gamma_x$, and has final alcove in $r^{-1}_{J,y}(\z)$.  That is, $\gamma' \in \G_\fa(\gamma_x,\z')$.  Therefore $\Labl\cF_{J,y}$ induces a bijection from  $\G_\fa(\gamma_x,\z')$ to $\LG_{J,y}^+(\gamma_x,\z)$, as required.
\end{proof}


\section{Shadows of chimney retractions and orbits in affine flags}\label{sec:chimney-shadows}

In this final section, we link the geometric and combinatorial structures associated with buildings  introduced in Sections~\ref{sec:Chimneys} and \ref{sec:retractions} to algebraic properties of the defining groups in the Bruhat--Tits case.  
The main goal is to prove Theorems \ref{thm:DoubleCosetsI-intro} and \ref{thm:DoubleCosetsK-intro} from the introduction, as well as the more general Theorem \ref{thm:DoubleCosetsPar}.

We review the algebraic context for groups which admit affine buildings in Section \ref{sec:groups-prelim} and establish notation for the associated spherical parabolic subgroups in Section \ref{sec:spherical}.
Preimages of chimney retractions are then related to intersections of double cosets in affine flag varieties as shown in Propositions \ref{prop:inverse_classical} and \ref{prop:inverse_chimney}, as well as Corollary \ref{cor:inverse_chimney}. These are the key results in Section~\ref{sec:classic-retract}, which only relies on the material in Section~\ref{sec:Chimneys}. The results of Section~\ref{sec:retractions} are applied in Section~\ref{sec:shadow-retract}, where the connection between labeled folded galleries and the algebraic framework is made.  
The proofs of Theorems~\ref{thm:DoubleCosetsI-intro} and~\ref{thm:DoubleCosetsK-intro} then follow by applying Proposition \ref{prop:DoubleCosetsI-body} and Corollary \ref{cor:DoubleCosetsK-body}, respectively, combined with Theorem \ref{thm:nonempty}, all of which appear in Section \ref{sec:shadow-retract}.


\subsection{Groups with affine Tits systems}\label{sec:groups-prelim}

In the next two subsections, we review the algebraic situation when a group $G$ has an associated affine building; see Chapter 6 of \cite{AB} for more details. We maintain all notation previously established, giving additional algebraic meaning to some of the geometric terminology from Section \ref{sec:preliminaries}.

Let $G$ be a group which admits an \emph{affine Tits system} $(G, I, N, \aS)$.  This means that $I$ and $N$ are subgroups of $G$ such that $G = \langle I, N \rangle$, the intersection $ I \cap N$ is normal in $N$, the quotient $\aW = N / (I \cap N)$ is generated by $\aS$, the pair $(\aW,\aS)$ is an affine Coxeter system, and certain additional axioms hold (see, for example, Definition~6.55 of~\cite{AB}).  We call $\aW$ the \emph{affine Weyl group} of $G$.  An affine Tits system is the same thing as an \emph{affine $BN$-pair}, but we employ the terminology of Tits systems since we will use $B$ for a different subgroup of $G$ in the next section.  

Let $X$ be the building associated to this Tits system.  Then $X$ is an affine building of type $(\aW,\aS)$, and hence can be realized as a simplicial complex as described in Section~\ref{sec:preliminaries}.   Recall that $\App$ denotes the standard apartment of $X$ and $\fa$ the fundamental or base alcove of $X$.  Then by construction, the group $G$ has a type-preserving and strongly transitive action on $X$,  the subgroup $I \cap N$ of $G$ fixes $\App$ pointwise, $N$ stabilizes $\App$ setwise and acts transitively on the set of alcoves in $\App$, and $I$ is the stabilizer of $\fa$ in~$G$.  We will denote by $K$ the subgroup of $G$ which stabilizes the origin.

To provide an example which motivates our notation, if $G$ is a simple, simply-connected reductive group and $F$ is any local field with ring of integers $\cO$, then $G(F)$ admits an affine Tits system $(G(F), I, N, \aS)$ where $I \leq G(\cO)$ is the \emph{Iwahori subgroup} of $G(F)$, $I \cap N = T(\cO)$ for $T$ a maximal torus, $(\aW,\aS)$ is the affine Weyl group of $G(F)$, $K$ is the \emph{parahoric subgroup} $K = G(\cO)$, and the associated building $X$ is the Bruhat--Tits building for $G(F)$. (While $G$ here is simply-connected, our geometric results extend naturally to all connected reductive groups by working with \emph{extended alcoves} as in \cite{MST2}.) All affine Kac--Moody groups also admit affine Tits systems.


\subsection{Spherical parabolic subgroups}\label{sec:spherical}

Now for any $G$ with an affine Tits system, let $B$ be the stabilizer in $G$ of the \emph{antidominant Weyl chamber} $\mathcal{C}_{w_0}$ of $\App$.  Then $G$ also has a spherical Tits system $(G, B, N, \sS)$ with the same $N$ as above, and the associated building is the spherical building at infinity of the affine building $X$.  The \emph{spherical} or \emph{finite Weyl group} of $G$ is the quotient $\sW = N/(B \cap N)$, with canonical generating set $\sS$.  We may factor $B$ as $B = TU$ where $T = B \cap N$.  If $G$ is a simple, simply-connected reductive group and $F$ a local field, then $B = B(F)$ is a \emph{Borel subgroup} of $G(F)$ containing $T = T(F)$, and $U = U(F)$ is a \emph{unipotent subgroup}.

We denote by $B^{w_0} = w_0 B w_0^{-1}$ the stabilizer of the \emph{dominant Weyl chamber} $\Cf$.  For any subgroup $H$ of $G$ and any $g \in G$, we similarly denote the conjugate subgroup by $H^g = gHg^{-1}$. We identify $\App$ with $V^*$ such that the set of positive roots $\Phi^+$ are those which are positive with respect to $B^{w_0}$. Under this convention, the roots in $\Phi$ which lie in the half-apartment of $\App$ bounded by $H_{\widetilde\alpha}$ and containing $\Cf$ are precisely those roots in $\Phi^+$, and for $\lambda \in R^\vee$, the element $t^\lambda \in \aW$ acts on $\App$ by translating by $\lambda$ (rather than by $-\lambda$, had we chosen $\Phi^+$ to be those roots which are positive with respect to $B$).

Let $P$ be a \emph{standard spherical parabolic subgroup} of $G$, meaning a subgroup of $G$ which contains $B$.  The set of standard spherical parabolic subgroups is in bijection with the subsets $J \subseteq [n]$, and so we often write $P = P_J$.  Then $P_J$ admits a \emph{Levi decomposition} $P_J = L_J U_J$, where $L_J$ is the Levi component and $U_J$ is its unipotent radical.  The Levi subgroup $L_J$ also admits an affine and a spherical Tits system. Its finite Weyl group is the spherical Coxeter system $(W_J, S_J)$, and it has a corresponding basis of simple roots $\Delta_J  \subseteq \Delta$, a set of positive roots $\Phi_J^+ = \Phi^+ \cap \Phi_J$, and so on. (Note that in the spherical context, we omit the 0-subscript in $(\sW)_J$ to avoid double indices.) In particular, standard spherical parabolic subgroups $P_J$ are in bijection with Levi subgroups $L_J$, as well as with subsets $J \subseteq [n]$, and we use all three methods of indexing objects by an associated parabolic. The subgroups \[ I_P = I_{P_J} := (I\cap L_J)U_J\] and their conjugates $(I_P)^y=y I_P y^{-1}$ for $y \in \aW$ play a prominent role in the remainder of the paper.


\subsection{Preimages of chimney retractions are orbits in affine flag varieties}\label{sec:classic-retract}

From now on, let $G$ be a group with an affine Tits system, and let $X$ be the associated affine building with standard apartment $\App$. In this section, we express preimages of chimney retractions as orbits of certain subgroups of $G$. We emphasize that the material in Section \ref{sec:classic-retract} does not make use of Section~\ref{sec:retractions}.

Recall that $r_{J,y}: X \to \App$ denotes the retraction from the $(J,y)$-chimney, and that we simply write $r_J: X \to \App$ in the case where $y=1$.  We first discuss preimages under the classical retractions.   If $J = [n]$, then $r_J$ is the retraction onto $\App$ centered at the base alcove $\fa$, which we denote simply by $r: X \to \App$. If $J=[n]$ and $y \in \aW$ is arbitrary, we write $r_{\y}:X \to \App$ for the retraction onto $\App$ centered at the alcove $\y = y \fa$.  If $J = \emptyset$, then $r_\emptyset$ is the retraction onto $\App$ centered at the chamber at infinity represented by  $\Cw_{w_0}$, since by our conventions $B$ is the stabilizer in $G$ of the antidominant Weyl chamber~$\Cw_{w_0}$.  (Note that the retraction $r_\emptyset$ is sometimes written as $r_U$, since $B = TU$ and so the chamber at infinity represented by $\Cw_{w_0}$ is stabilized by $U$.)

By the construction of $X$, and since $I$ is the stabilizer in $G$ of the base alcove $\fa$, for all $x \in \aW$ we may identify the alcove $\mathbf{x} = x\fa$ of $\App$ with the left coset $xI$.  For $\lambda \in R^\vee$, we may similarly identify the vertex $\lambda$ of $\App$ with the left coset $t^\lambda K$.  These identifications are used in the following classical result characterizing the preimages of $r$ and $r_\emptyset$.

\begin{prop}\label{prop:inverse_classical}
Let $r:X \to \App$ be the retraction onto $\App$ centered at the base alcove~$\fa$, and let $r_\emptyset: X\to \App$ be the retraction on $\App$ from the chamber at infinity represented by~$\Cw_{w_0}$.
\begin{enumerate}
\item For all $x \in \aW$, we have $r^{-1}(\x) = I x I$ and $r_\emptyset^{-1}(\x) = U x I$.
\item For all $\lambda \in R^\vee$, we have $r^{-1}(\sW \cdot \lambda) = K t^\lambda K$ and $r_\emptyset^{-1}(\sW \cdot \lambda) = U t^\lambda K$. 
\end{enumerate}
\end{prop}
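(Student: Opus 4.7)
The approach will be to identify each preimage as an orbit of the appropriate stabilizer subgroup of $G$, combining strong transitivity of the $G$-action on $X$ with the rigidity of Coxeter complex automorphisms. Throughout, I will use the identifications $g\fa \leftrightarrow gI$ for alcoves and $gv_0 \leftrightarrow gK$ for vertices in the $K$-orbit of $v_0$; under these identifications $IxI$ corresponds to the $I$-orbit $I \cdot \x$, $UxI$ to $U \cdot \x$, $Kt^\lambda K$ to $K \cdot \lambda$, and $Ut^\lambda K$ to $U \cdot \lambda$.

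For the first equality in (1), I will prove $r^{-1}(\x) = I \cdot \x$ using strong transitivity of $G$ on pairs (apartment, alcove) together with the rigidity statement that any type-preserving simplicial automorphism of $\App$ fixing an alcove is trivial. For the inclusion $I \cdot \x \subseteq r^{-1}(\x)$, given $i \in I$ the apartment $i\App$ contains both $\fa$ and $i\x$, and rigidity forces the isomorphism $i^{-1}|_{i\App}\colon i\App \to \App$ to coincide with the retraction-induced map $r|_{i\App}$, yielding $r(i\x) = \x$. For the reverse, given $c$ with $r(c) = \x$, I will pick an apartment $\App'$ containing $\fa$ and $c$, apply strong transitivity to produce some $i \in I$ with $i\App' = \App$, and invoke rigidity once more to conclude $ic = \x$. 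The second equality $r_\emptyset^{-1}(\x) = U \cdot \x$ proceeds by the same template with $\fa$ replaced by a subsector of $\Cw_{w_0}$ and $I$ replaced by $U$; the rigidity input is now that any type-preserving automorphism of $\App$ fixing a subsector of $\Cw_{w_0}$ pointwise is trivial, which holds because such a subsector is full-dimensional in $\App$.

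Part (2) will be deduced from the vertex analogues $r^{-1}(\mu) = I \cdot \mu$ and $r_\emptyset^{-1}(\mu) = U \cdot \mu$, proved by exactly the same arguments with $\x$ replaced by $\mu$. Taking unions over $\mu \in \sW \cdot \lambda$ and matching with the stated double cosets requires identifying $I\sW \cdot \lambda$ with $K \cdot \lambda$, and analogously for $r_\emptyset$. The inclusion $I\sW \cdot \lambda \subseteq K \cdot \lambda$ is immediate from $I, \sW \subseteq K$; for the reverse, I will use that $K$ stabilizes $v_0$ and hence preserves combinatorial distance from $v_0$, combined with the observation that the restriction $r|_{\App'}$ on any apartment $\App'$ containing $\fa$ is an isomorphism fixing $v_0$, so by Coxeter rigidity at $v_0$ it must send any vertex in the $K$-orbit of $\lambda$ into $\sW \cdot \lambda$. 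The main obstacle I anticipate is executing this final matching step cleanly: the rigidity of $\App$-automorphisms at $v_0$ is weaker than at $\fa$, leaving a residual $\sW$-ambiguity that must be tracked against the $\sW$-orbit on the target; the corresponding step for $r_\emptyset$ likewise requires an Iwasawa-type identification to absorb the difference between $U \sW \cdot \lambda$ and $U \cdot \lambda$.
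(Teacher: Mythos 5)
Your treatment of part (1) is correct, and it is essentially the standard argument (strong transitivity on pairs consisting of an apartment and an alcove, respectively an apartment and a chamber at infinity, combined with rigidity of type-preserving isomorphisms of Coxeter complexes that agree on an alcove). The paper simply cites \cite{GaussentLittelmann} and \cite{BruhatTits} for (1), so there is nothing to compare there. For the first equality in (2), the matching step $I\,\sW\cdot\lambda = K\cdot\lambda$ is the real content; the paper closes it with the decomposition $Kt^\lambda K=\bigsqcup_{v,w\in\sW}It^{v\lambda}wI$ (i.e.\ $K=\bigsqcup_{w\in\sW}IwI$ together with $\sW t^\lambda \sW=\{t^{v\lambda}w\}$), and your geometric sketch via distances from $v_0$ can be made rigorous but amounts to reproving this Cartan-type decomposition, with the residual $\sW$-ambiguity you flag being exactly the double union over $v,w$.

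The genuine gap is in the last step for $r_\emptyset$. The ``Iwasawa-type identification'' you propose, namely $U\,\sW\cdot\lambda=U\cdot\lambda$, is false whenever $\sW\cdot\lambda\neq\{\lambda\}$: the Iwasawa decomposition $G=\bigsqcup_{\nu}Ut^{\nu}K$ is \emph{disjoint} over the coweight lattice, so $Ut^{w\lambda}K\cap Ut^{\lambda}K=\emptyset$ for $w\lambda\neq\lambda$, and hence $\bigcup_{w\in\sW}U\cdot(w\lambda)$ strictly contains $U\cdot\lambda$ in general. Geometrically this is forced by your own (correct) vertex-level computation: the distinct vertices $w\lambda$ of $\App$ are all fixed by $r_\emptyset$, so their fibers $r_\emptyset^{-1}(w\lambda)=U\cdot(w\lambda)$ are pairwise disjoint and each nonempty, and a union of $|\sW\cdot\lambda|$ of them cannot collapse to a single $U$-orbit. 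What your argument actually establishes is $r_\emptyset^{-1}(\lambda)=Ut^{\lambda}K$, the preimage of the single vertex $\lambda$ rather than of its $\sW$-orbit. This is also what the paper's own deduction yields (by part (1), $Ut^{\lambda}K=\bigsqcup_{w\in\sW}Ut^{\lambda}wI$ is the $r_\emptyset$-preimage of the set of alcoves of $\App$ containing the vertex $\lambda$), and it is the form of the statement used later, e.g.\ in Corollary~\ref{cor:inverse_chimney} with $J=\emptyset$ and $y=\id$. So rather than trying to absorb the difference between $U\,\sW\cdot\lambda$ and $U\cdot\lambda$, you should recognize that no argument can produce the displayed equality with $\sW\cdot\lambda$ on the left; the correct assertion to prove (and to record) is $r_\emptyset^{-1}(\lambda)=Ut^{\lambda}K$.
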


\begin{proof} For (1), see Section 3.4 of \cite{GaussentLittelmann} or Sections 6--7 of \cite{BruhatTits}, for example. Using that $K = \sqcup_{w \in \sW} IwI$ so that $Kt^\lambda K = \sqcup_{v,w \in \sW} I t^{v\lambda}w I$ and $Ut^\lambda K = \sqcup_{w \in \sW} U t^\lambda w I$, the equalities in (2) then follow directly from (1).
\end{proof}

We next observe the following consequence of Proposition~\ref{prop:inverse_classical}.   For any face $\sigma$ of $\fa$ which contains the origin $v_0$, we denote by $K(\sigma)$ the parahoric subgroup of $G$ which is the stabilizer in $G$ of $\sigma$. For example, $K = K(v_0)$ and $I = K(\fa)$.  We write $W_\sigma$ for the subgroup of $\sW$ which stabilizes $\sigma$. (To avoid double indices, we omit the 0-subscript here.) Thus, $W_\sigma$ is a subgroup of $\sW$ which is also contained in $K(\sigma)$. 

\begin{corollary}\label{cor:inverse_classical}   For any faces $\sigma$ and $\tau$ of the base alcove $\fa$ which contain the origin and any $x \in \aW$, we have $r^{-1}(W_\sigma \cdot x K(\tau)) = K(\sigma) x K(\tau)$.
\end{corollary}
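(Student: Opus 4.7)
The plan is to reduce to the alcove case $\tau = \fa$, where Proposition~\ref{prop:inverse_classical}(1) applies directly, and then obtain the general statement by multiplying on the right by $K(\tau)$ and using the containment $I \subseteq K(\tau)$. The first step is to identify $W_\sigma \cdot xK(\tau)$ with the set of faces $\{wx\tau : w \in W_\sigma\}$ in $\App$. Since $r$ is simplicial, $r^{-1}(W_\sigma \cdot xK(\tau)) = \bigcup_{w \in W_\sigma} r^{-1}(wx\tau)$, and I would upgrade Proposition~\ref{prop:inverse_classical}(1) from alcoves to arbitrary faces to conclude that each $r^{-1}(wx\tau)$ equals the $I$-orbit of $wx\tau$, corresponding to the set of $K(\tau)$-cosets $IwxK(\tau)/K(\tau)$. (This upgrade follows from the alcove case together with the left $I$-invariance of $r$, which in turn comes from strong transitivity of $G$ on apartments containing $\fa$.) Taking unions, I obtain
\[r^{-1}(W_\sigma \cdot xK(\tau)) = IW_\sigma xK(\tau),\]
viewed as a union of left $K(\tau)$-cosets in $G$.

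It then remains to prove $IW_\sigma xK(\tau) = K(\sigma)xK(\tau)$. I would first handle the alcove case $\tau = \fa$, $K(\tau) = I$, using the parahoric Bruhat decomposition $K(\sigma) = \bigsqcup_{w \in W_\sigma} IwI$. The inclusion $IW_\sigma xI \subseteq K(\sigma)xI$ is trivial, and for the reverse I would induct on $\ell(w)$ to show $IwI \cdot xI \subseteq \bigcup_{w' \in W_\sigma} Iw'xI$: writing $w = sw''$ with $s$ a simple reflection in $W_\sigma$ and $\ell(w'') < \ell(w)$, the inductive step applies the standard Bruhat multiplication rule $IsI \cdot IyI \subseteq IsyI \cup IyI$, combined with the fact that $W_\sigma$, being a standard parabolic subgroup of $\sW$, is closed under left multiplication by its own simple reflections. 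I expect this induction to be the main obstacle, since it is the step where one must carefully control how the asymmetry between the left $I$ and the sandwiched $I$'s propagates through the Bruhat relations.

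The general case then follows quickly. Since $\tau \subseteq \fa$ implies $I = K(\fa) \subseteq K(\tau)$, we have $IK(\tau) = K(\tau)$, and so
\[K(\sigma)xK(\tau) = (K(\sigma)xI) \cdot K(\tau) = (IW_\sigma xI) \cdot K(\tau) = IW_\sigma xK(\tau),\]
which when combined with the identification from the first paragraph completes the proof.
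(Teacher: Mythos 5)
Your argument is correct and follows the same route as the paper, which disposes of this corollary in one line by citing Proposition~\ref{prop:inverse_classical}(1) together with the type-preservation of $r$; you have simply supplied the details that the paper leaves implicit, namely the reduction from faces to alcoves via the $I$-orbit description and the standard Tits-system bookkeeping showing $K(\sigma)xK(\tau)=IW_\sigma xK(\tau)$ via the parahoric Bruhat decomposition and the multiplication rule $IsI\cdot IyI\subseteq IsyI\cup IyI$. The step you flag as the main obstacle is in fact routine for any Tits system, so no genuine gap remains.
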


\begin{proof}  This follows from the preimage of $r$ given by Proposition~\ref{prop:inverse_classical}(1), together with the fact that the retraction $r$ is type-preserving.
\end{proof}

We now generalize Proposition~\ref{prop:inverse_classical} to arbitrary chimney retractions.  As with our treatment of chimney retractions in~\cite{MNST}, much of this discussion will be a formalization and generalization of arguments from Section 11.2 of~\cite{GHKRadlvs}.  

We first state a Bruhat decomposition result.  This lemma is established in~\cite{GHKRadlvs} in the case $G = G(\overline{\F}_q((t)))$, where $G$ is a split connected reductive group over $\overline{\F}_q$ an algebraic closure of the finite field~$\F_q$. However, the proof immediately extends to any $G$ with an affine Tits system.  

\begin{lemma}[Lemma~11.2.1 of \cite{GHKRadlvs}]\label{lem:Bruhat} Let $P$ be a standard spherical parabolic subgroup of $G$.  Then 
\[ G = \bigsqcup_{z \in \aW} I_P z I. \]
\end{lemma}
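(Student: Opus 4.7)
The plan is to establish the existence and disjointness parts of the decomposition separately, following the outline in \cite{GHKRadlvs}. For existence, I would start from the standard Iwahori--Bruhat decomposition $G = \bigsqcup_{x \in \aW} IxI$ attached to the Tits system $(G, I, N, \aS)$, so it suffices to show every $IxI$ is contained in some $I_P z I$. Using the Iwahori-type factorization
\[
I = (I \cap L_J)(I \cap U_J)(I \cap U_J^-),
\]
where $U_J^-$ denotes the unipotent radical of the parabolic opposite to $P = P_J$, and noting that $(I \cap L_J)(I \cap U_J) \subseteq I_P$, one immediately gets $IxI \subseteq I_P \cdot (I \cap U_J^-) \cdot xI$. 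The remaining step is to absorb the $(I \cap U_J^-)$ factor by conjugating it past $x$: writing it as a product of affine root subgroups $U_\beta$, each conjugate $x^{-1} U_\beta x = U_{x^{-1}\beta}$ either already lies in $I$ (in which case it is absorbed into $xI$) or can be pushed through the Weyl element at the cost of replacing $x$ by a different element $z \in \aW$.

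For disjointness, the most transparent route is geometric via the chimney retraction $r_{J, 1}: X \to \App$. The key input is that $I_P$ stabilizes the $J$-chimney $\xi_J$: the Iwahori factor $I \cap L_J$ of the Levi preserves $\xi_J$ because $L_J$ stabilizes the relevant parabolic data of type $J$, while $U_J$ fixes each subsector of $\xi_J$ pointwise, generalizing the familiar case $J = \emptyset$ in which $U$ stabilizes the antidominant chamber at infinity. By Definition~\ref{def:chimney-retraction}, this stabilization forces $r_{J, 1}$ to be constant on $I_P$-orbits of alcoves. Hence if $g = i_P z i \in I_P z I$, then $g\fa = i_P z\fa$ since $i$ fixes $\fa$, and so
\[
r_{J, 1}(g\fa) \;=\; r_{J, 1}(z\fa) \;=\; z\fa,
\]
where the last equality uses that $r_{J,1}$ fixes $\App$ pointwise. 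Since distinct elements of $\aW$ correspond to distinct alcoves of $\App$, no $g$ can lie in two distinct double cosets $I_P z I$ and $I_P z' I$.

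The main obstacle is the absorption step in the existence direction. For groups over local fields this is a direct calculation with affine root subgroups as carried out in \cite{GHKRadlvs}, but for an abstract $G$ with affine Tits system one needs to know that sufficient root subgroup structure is available in $G$ to perform the absorption; depending on the framework, this can be extracted either from Moufang-type properties of the spherical building at infinity or by reformulating the manipulation combinatorially through the $G$-action on $X$. Disjointness, by contrast, is essentially formal once the $I_P$-invariance of $r_{J, 1}$ has been recorded, and amounts to little more than checking how the Iwahori of the Levi and the unipotent radical $U_J$ act on the $J$-chimney.
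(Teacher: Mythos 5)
First, note that the paper does not actually prove this lemma: it cites Lemma~11.2.1 of \cite{GHKRadlvs} (proved there for $G(\overline{\F}_q((t)))$) and asserts that "the proof immediately extends" to any group with an affine Tits system. So there is no in-paper argument to match yours against, and your attempt to supply one is welcome. Your disjointness argument is essentially sound and consistent with the paper's own later logic: the paper's Corollaries~\ref{cor:fixSector1} and~\ref{cor:fixSector} show that every $g \in I_P$ fixes \emph{some} $J$-sector, and the proof of Proposition~\ref{prop:inverse_chimney} then deduces exactly your identity $r_{J,1}(g z \fa) = z\fa$; since that part of Proposition~\ref{prop:inverse_chimney} does not rely on Lemma~\ref{lem:Bruhat}, there is no circularity. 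One imprecision: you claim $U_J$ fixes \emph{each} subsector of $\xi_J$ pointwise. That is false even for $J=\emptyset$ (an element of $U$ fixes only a sufficiently deep subsector of $\Cw_{w_0}$, depending on the element); what is true, and what suffices, is that each $g \in U_J$ fixes some $J$-sector depending on $g$, which is the content of Lemma~\ref{lem:fixAlcoves}(2).

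The genuine gap is in the existence half, and you have flagged it yourself without closing it. The absorption step --- rewriting $(I \cap U_J^-)\, x I$ by conjugating affine root subgroups $U_\beta$ past $x$ --- is precisely the part of the argument that uses a valuated root datum / RGD-type structure, which an abstract affine Tits system does not provide; saying the factor "can be pushed through the Weyl element at the cost of replacing $x$ by a different element $z$" does not specify which $z$, why the process terminates, or why the result is a genuine decomposition of $IxI$ into the stated double cosets. Moreover, the Iwahori factorization $I = (I\cap L_J)(I\cap U_J)(I\cap U_J^-)$ is itself an unproved input in this generality. A route that avoids root-group calculations entirely is available from the decompositions the paper already uses: start from the Iwasawa-type decomposition $G=\bigsqcup_x UxI$ (equivalently $r_\emptyset^{-1}(\x)=Ux I$, Proposition~\ref{prop:inverse_classical}), write $U = U_J\,(U\cap L_J)$ using $B \subseteq P_J$, and reduce the remaining factor $(U\cap L_J)$ to $(I\cap L_J)$-cosets by comparing the Iwasawa and Iwahori--Bruhat decompositions of the Levi $L_J$, which also carries an affine Tits system by the paper's standing assumptions. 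As written, your existence argument does not establish surjectivity of the union $\bigcup_z I_P z I$ in the generality the lemma claims.
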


The proof of the next result also extends immediately from~\cite{GHKRadlvs} to our setting.   

\begin{lemma}[Lemma~11.2.2 of~\cite{GHKRadlvs}]\label{lem:fixAlcoves}  Let $P = P_J = L_J U_J$ be a standard spherical parabolic subgroup of $G$.  Let $\lambda \in R^\vee \cap \left( \cap_{\alpha \in \Phi_J^+} H_{\alpha,0} \right)$, and let $v \in \sW$ be a minimal length representative of a (right) coset in $W_J \backslash \sW$.  Then:
\begin{enumerate}
\item Every element of the group $I \cap L_J$ fixes the alcove $t^\lambda v \fa$.
\item Let $g \in U_J$.  If $\lambda \in \cap_{\beta \in \Phi^+ \setminus \Phi_J^+} \beta^{n_\beta,w_0}$ for some collection of integers \[\{ n_\beta \in \Z \mid \beta \in \Phi_J \setminus \Phi_J^+\}\] depending upon $g$, then $g$ fixes the alcove $t^\lambda v \fa$.
\end{enumerate}
\end{lemma}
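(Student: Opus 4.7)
The plan is to prove both parts by explicitly computing the conjugate $(t^\lambda v)^{-1} g (t^\lambda v)$ and showing it stabilizes $\fa$, i.e., lies in $I$, using the structure of $I$ via root subgroup filtrations.

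For part (1), the key observation is a \emph{centralizer} property: since $\lambda \in \bigcap_{\alpha \in \Phi_J^+} H_{\alpha,0}$ means $\langle \lambda, \alpha \rangle = 0$ for every $\alpha \in \Phi_J$, the translation $t^\lambda$ centralizes $L_J$. Indeed, conjugation of a root subgroup element by $t^\lambda$ shifts its level by $\langle \lambda, \alpha\rangle = 0$ for any $\alpha \in \Phi_J$, and $t^\lambda$ commutes trivially with $T$. Hence for any $g \in I \cap L_J$,
\[
g \cdot (t^\lambda v \fa) \;=\; t^\lambda g v \fa \;=\; t^\lambda v \bigl(v^{-1} g v\bigr) \fa,
\]
reducing the claim to $v^{-1}(I \cap L_J) v \subseteq I$. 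Now $I \cap L_J$ is generated by $T \cap I$ together with filtrations $U_{\alpha,0}$ for $\alpha \in \Phi_J^+$ and $U_{\alpha,1}$ for $\alpha \in -\Phi_J^+$. Conjugation by $v^{-1}$ sends $U_{\alpha,k}$ to $U_{v^{-1}\alpha,k}$. Since $v$ is a minimum length representative of its coset in $W_J\backslash \sW$, we have $v^{-1}(\Phi_J^+) \subseteq \Phi^+$ (equivalently $v^{-1}(-\Phi_J^+) \subseteq \Phi^-$), and these containments are exactly what is needed so that each resulting root subgroup is sent into $I$.

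For part (2), fix $g \in U_J$ and write it as a finite product of root subgroup elements $u_\beta \in U_{\beta, m_\beta}$ for $\beta \in \Phi^+ \setminus \Phi_J^+$, where $m_\beta$ records the level of the half-apartment fixed by $u_\beta$. Computing
\[
(t^\lambda v)^{-1} g \,(t^\lambda v) \;=\; v^{-1}\bigl(t^{-\lambda} g\, t^\lambda\bigr) v,
\]
conjugation by $t^{-\lambda}$ shifts each factor to $U_{\beta, \, m_\beta - \langle \lambda, \beta\rangle}$, and then conjugation by $v^{-1}$ sends it into $U_{v^{-1}\beta, \, m_\beta - \langle \lambda, \beta\rangle}$. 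For each such factor to belong to $I$, the level must exceed a fixed threshold $N(v^{-1}\beta)\in\{0,1\}$ depending only on the sign of $v^{-1}\beta$. Equivalently, we need $\langle \lambda, \beta\rangle \leq m_\beta - N(v^{-1}\beta) =: n_\beta$, which is precisely the half-space condition $\lambda \in \beta^{n_\beta, w_0}$. Choosing these $n_\beta$ (which depend on $g$ only through the finitely many $m_\beta$) yields a cone of admissible $\lambda$, so the hypothesis forces $g \in \Stab_G(t^\lambda v \fa)$.

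The main obstacle is ensuring a well-behaved filtration $\{U_{\alpha,k}\}$ in the general affine Tits system framework, since the classical Bruhat--Tits valuation-theoretic filtration is not a priori available. However, as flagged in Remark~\ref{rem:comparisonLF}, one can define $U_{\alpha,k}$ as the pointwise fixator of the half-apartment $\alpha^{k,\id}$, and the properties used above (behavior under $T$-conjugation, under $\sW$-conjugation, and the Iwahori decomposition of $I$) extend verbatim to this abstract setting. A second technical point is that the decomposition of $g \in U_J$ as a product of root subgroup elements depends on a choice of ordering, but the resulting threshold conditions only depend on the multiset of levels $\{m_\beta\}$, which controls the integers $n_\beta$ independently of this choice.
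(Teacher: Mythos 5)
Your argument is correct and is essentially the proof the paper is invoking: the paper gives no argument of its own for this lemma, asserting only that the proof of Lemma~11.2.2 of \cite{GHKRadlvs} ``extends immediately'' to this setting, and that proof is precisely the conjugation computation you carry out --- reduce the claim that $g$ fixes $t^\lambda v \fa$ to $(t^\lambda v)^{-1} g (t^\lambda v) \in I$, use $\langle \lambda, \alpha\rangle = 0$ for $\alpha \in \Phi_J$ together with $v^{-1}(\Phi_J^+) \subseteq \Phi^+$ for part (1), and shift root-subgroup levels by $\langle \lambda, \beta\rangle$ and choose $\lambda$ sufficiently antidominant for part (2). The one soft spot, namely your appeal to an Iwahori-type decomposition of $I \cap L_J$ and to elements of $U_J$ factoring as finite products of filtered root-subgroup elements in an abstract affine Tits system (these do not follow from the $BN$-pair axioms alone and really require a root group datum), is exactly the structural assumption the paper itself takes for granted when it posits the Levi decomposition $P_J = L_J U_J$, so your proof is at the same level of rigor as the source it reconstructs.
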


We now translate Lemma~\ref{lem:fixAlcoves} into a statement about sectors.  Given $P = P_J = L_J U_J$ as in Lemma~\ref{lem:fixAlcoves}, it will be convenient to refer to the region $\bigcap_{\alpha \in \Phi_J^+} \left(\alpha^{0,\id} \cap \alpha^{1,w_0} \right)$ of $\App$ as the \emph{$J$-corridor}.

\begin{corollary}\label{cor:fixSector1}  Let $P = P_J = L_J U_J$ be a standard spherical parabolic subgroup of $G$, and let $y \in \aW$.  Then:
\begin{enumerate}
\item Every element of the group $(I \cap L_J)^y$ fixes every $(J,y)$-sector.
\item Let $g \in U_J^y$.  Then there is a $(J,y)$-sector which is fixed by $g$.
\end{enumerate}
\end{corollary}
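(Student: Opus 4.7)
My plan is to reduce Corollary \ref{cor:fixSector1} to Lemma \ref{lem:fixAlcoves} in two steps. First I would translate the definition of a $J$-sector into an explicit parametrization of its alcoves: namely, the alcoves contained in $S_J(\{n_\beta\})$ are precisely those of the form $t^\lambda v\fa$ where $v \in \sW$ is a minimum length coset representative of $W_J \backslash \sW$, $\lambda \in R^\vee$ lies in $\bigcap_{\alpha \in \Phi_J^+} H_{\alpha,0}$, and $\lambda$ satisfies appropriate bounds forcing $t^\lambda v\fa \subset \beta^{n_\beta,w_0}$ for each $\beta \in \Phi^+ \setminus \Phi_J^+$. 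This is exactly the family of alcoves to which Lemma \ref{lem:fixAlcoves} applies. Second, I would conjugate by $y$ to transfer statements about $I \cap L_J$ or $U_J$ fixing a $J$-sector into the corresponding statements about $(I \cap L_J)^y$ or $U_J^y$ fixing the associated $(J,y)$-sector, using that $y \cdot S_J(\{n_\beta\}) = S_{J,y}(\{n_\beta\})$ by Definition \ref{def:Jy-chimney}.

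For part (1), given any $h \in I \cap L_J$ and any $J$-sector $S_J(\{n_\beta\})$, the parametrization above together with Lemma \ref{lem:fixAlcoves}(1) yields that $h$ fixes each alcove of $S_J(\{n_\beta\})$. Since a sector is determined as a subcomplex by its constituent alcoves, and since $G$ acts on $X$ type-preservingly (so fixing an alcove means fixing it pointwise), we conclude $h$ fixes $S_J(\{n_\beta\})$ pointwise. Writing an arbitrary $g \in (I \cap L_J)^y$ as $g = yhy^{-1}$, we then obtain that $g$ fixes $y \cdot S_J(\{n_\beta\}) = S_{J,y}(\{n_\beta\})$ for every choice of $\{n_\beta\}$, giving (1).

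For part (2), given $g \in U_J^y$, write $g = yhy^{-1}$ with $h \in U_J$. Lemma \ref{lem:fixAlcoves}(2) supplies integers $\{n_\beta\}_{\beta \in \Phi^+ \setminus \Phi_J^+}$, depending on $h$, such that $h$ fixes every alcove $t^\lambda v\fa$ with $v$ a minimum length coset representative and $\lambda \in R^\vee \cap \bigl(\bigcap_{\alpha} H_{\alpha,0}\bigr) \cap \bigl(\bigcap_{\beta} \beta^{n_\beta,w_0}\bigr)$. By possibly decreasing each $n_\beta$ by a uniform offset (bounded in terms of the extent of the alcoves $v\fa$) to ensure that the full alcove $t^\lambda v\fa$ rather than merely the vertex $\lambda$ lies in $\beta^{n_\beta,w_0}$, we may arrange that every alcove of the resulting $J$-sector $S_J(\{n_\beta\})$ is fixed by $h$. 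Hence $h$ fixes $S_J(\{n_\beta\})$ pointwise, and conjugation gives that $g$ fixes the $(J,y)$-sector $y \cdot S_J(\{n_\beta\}) = S_{J,y}(\{n_\beta\})$. The main obstacle I anticipate is the first step of matching the alcoves in an arbitrary $J$-sector to the parametrization $t^\lambda v\fa$ required by Lemma \ref{lem:fixAlcoves}, together with the mild sharpening of bounds in part (2); these are routine bookkeeping from the definitions, after which the algebra is an immediate conjugation argument.
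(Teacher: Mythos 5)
Your conjugation step and your handling of part (2)'s dependence of $\{n_\beta\}$ on $g$ are fine, but the reduction rests on a parametrization claim that is false, and this is precisely the gap that the paper's proof is designed to fill. The alcoves of a $J$-sector (or of the $J$-corridor $\bigcap_{\alpha \in \Phi_J^+} \alpha^{0,\id} \cap \alpha^{1,w_0}$) are \emph{not} exhausted by the alcoves $t^\lambda v\fa$ with $\lambda \in R^\vee \cap \bigl(\bigcap_{\alpha \in \Phi_J^+} H_{\alpha,0}\bigr)$ and $v$ a minimal length representative of $W_J\backslash \sW$. For example, in type $\tilde A_2$ with $J=\{1\}$, an alcove $t^\mu w\fa$ lies in the strip $0\leq \langle\,\cdot\,,\alpha_1\rangle\leq 1$ if and only if either $\langle \mu,\alpha_1\rangle=0$ and $w^{-1}\alpha_1>0$ (these are the alcoves covered by Lemma~\ref{lem:fixAlcoves}) or $\langle\mu,\alpha_1\rangle=1$ and $w^{-1}\alpha_1<0$; the second family, e.g.\ $t^{\alpha_1^\vee+\alpha_2^\vee}s_1\fa$, lies in the corridor and in suitable $J$-sectors but is not of the form required by the lemma, since the decomposition $x=t^\mu w$ is unique. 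So "routine bookkeeping" cannot identify the sector's alcoves with the lemma's alcoves, and your proof only shows that $I\cap L_J$ (resp.\ $g\in U_J$) fixes a proper subfamily of the alcoves of the sector.

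The missing ingredient is an argument propagating the fixing from the special alcoves $t^\lambda v\fa$ to every alcove of the corridor (resp.\ sector). The paper does this by induction along a minimal gallery $\gamma$ from some $t^\lambda v\fa$ to an arbitrary alcove $\x$ of the $J$-corridor: convexity of the corridor keeps $\gamma$ inside it; the second-to-last alcove $\x'$ is fixed by induction; and then one chooses another special alcove $t^{\lambda'}v'\fa$ in the half-apartment bounded by the wall separating $\x'$ from $\x$ and containing $\x$ (such alcoves exist because that wall must be of the form $H_{\beta,k}$ with $\beta\in\Phi^+\setminus\Phi_J^+$), builds a minimal gallery from $t^{\lambda'}v'\fa$ through $\x$ to $\x'$, and uses that a type-preserving automorphism fixing both endpoints of a minimal gallery fixes all of its alcoves. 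Without this (or some equivalent) step, parts (1) and (2) are not established for $\emptyset \subsetneq J \subsetneq [n]$.
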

\begin{proof}  We may assume $y = \id$ for both statements.  If $J = [n]$ then $I \cap L_J = I$ and $U_J$ is trivial, while the unique $J$-sector is the base alcove $\fa$, so the result is clear.  Next suppose $J = \emptyset$, equivalently $P = B$.   Then $I \cap L_J = I \cap T$ and $N = U$.  Hence $I \cap L_J$ fixes $\App$, since $T$ does, and so (1) holds in this case.  For (2), we have that $U$ stabilizes the chamber at infinity corresponding to the antidominant Weyl chamber~$\mathcal{C}_{w_0}$.  It follows that every element of $U$ fixes some subsector of the antidominant Weyl chamber.  Since every subsector of $\mathcal{C}_{w_0}$ is a $J$-sector with $J = \emptyset$, we obtain~(2).

We may now assume that $J$ is a nonempty proper subset of $[n]$.  We will show that in this case, $I \cap L_J$ fixes the entire $J$-corridor.  Since any $J$-sector is contained in the $J$-corridor, (1) follows.  Now by Lemma~\ref{lem:fixAlcoves}(1), the group $I \cap L_J$ fixes all alcoves of the form $t^\lambda v \fa$ where $\lambda \in R^\vee \cap \left( \cap_{\alpha \in \Phi_M^+} H_{\alpha,0} \right)$ and $v$ is a minimal length representative for a coset in $W_J \backslash \sW$.  Observe that these alcoves all lie in the $J$-corridor.  

Now let $\x$ be any alcove in the $J$-corridor, and let $\gamma = \gamma_{\lambda,v}$ be a minimal gallery from some (fixed) alcove $t^\lambda v \fa$ as in the statement of Lemma~\ref{lem:fixAlcoves} to $\x$.  We will show that $I \cap L_J$ fixes $\x$ by induction on the number of alcoves in $\gamma$.  The previous paragraph gives the result when $\gamma$ has only one alcove, namely $\x = t^\lambda v\fa$.  Now suppose $\gamma$ has $j \geq 2$ alcoves, and let $\x'$ be the second last alcove in $\gamma$, so that $\x' \neq \x$ but $\x'$ is adjacent to $\x$.  Since the $J$-corridor is obtained by intersecting half-apartments of $\App$, any minimal gallery between two alcoves in the $J$-corridor is also contained in the $J$-corridor.  Hence $\x'$ is contained in the $J$-corridor.  Now as the initial subgallery of $\gamma$ from $t^\lambda v\fa$ to $\x'$ has $j - 1$ alcoves, by induction $\x'$ is fixed by $I \cap L_J$.  

Let $p$ be the panel between $\x'$ and $\x$, let $H$ be the hyperplane of $\App$ supported by $p$, and let $H^\x$ be the half-apartment of $\App$ which is bounded by $H$ and contains $\x$.  As the distinct alcoves $\x'$ and $\x$ both lie in the $J$-corridor, which is contained in the region between the hyperplanes $H_{\alpha,0}$ and $H_{\alpha,1}$ for all $\alpha \in \Phi_J^+$, the hyperplane $H$ is of the form $H = H_{\beta,k}$ for some $\beta \in \Phi^+ \setminus \Phi_J^+$ and $k \in \Z$.  Therefore the half-apartment $H^\x$ contains infinitely many alcoves of the form given by Lemma~\ref{lem:fixAlcoves}.  Choose one such alcove $t^{\lambda'}v'\fa$ and let $\gamma'$ be a minimal gallery from $t^{\lambda'}v'\fa$ to $\x$.  Let $\gamma''$ be the gallery from $t^{\lambda'}v'\fa$ to $\x'$ obtained by adding the alcove $\x'$ to the end of $\gamma'$.  Since $\gamma'$ is contained in $H^\x$, the minimal gallery $\gamma'$ does not cross $H$.  Thus $\gamma''$ is also a minimal gallery.  Now the first alcove $t^{\lambda'}v'\fa$ and last alcove $\x'$ of $\gamma''$ are both fixed by $I \cap L_J$, and $I \cap L_J$ takes minimal galleries to minimal galleries and preserves type.  Therefore $I \cap L_J$ fixes all alcoves in $\gamma''$.  Hence $I \cap L_J$ fixes $\x$, as required.

For (2), let $g \in U_J$, and let $\{ n_\beta \in \Z \mid \beta \in \Phi_J \setminus \Phi_J^+\}$ be such that, by Lemma~\ref{lem:fixAlcoves}(2), the element $g$ fixes all alcoves of the form $t^\lambda v\fa$ where \[\lambda \in \left( \bigcap_{\alpha \in \Phi_J^+} H_{\alpha,0} \right) \cap \left(\bigcap_{\beta \in \Phi^+ \setminus \Phi_J^+} \beta^{n_\beta,w_0}\right)\] and $v$ is a minimal length representative for a coset in $W_J \backslash \sW$.  Now all such alcoves lie in the intersection of half-apartments \[\left(\bigcap_{\alpha \in \Phi_J^+} \alpha^{0,\id} \cap \alpha^{1,w_0}\right) \cap  \left(\bigcap_{\beta \in \Phi^+ \setminus \Phi_J^+} \beta^{n_\beta,w_0} \right),\] and this intersection is a $J$-sector.  If $\x$ is any other alcove in this intersection, then $g$ fixes $\x$ by similar arguments to those used for (1).  Thus $g$ fixes some $J$-sector, as required.
\end{proof}

The following observation is immediate from our definition of a $(J,y)$-sector.

\begin{lemma}\label{lem:common}  The intersection of any two $(J,y)$-sectors contains a $(J,y)$-sector.
\end{lemma}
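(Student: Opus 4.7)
The plan is to establish the stronger statement that the set-theoretic intersection of two $(J,y)$-sectors is itself a $(J,y)$-sector. First I would reduce to the case $y = \id$: since any $(J,y)$-sector is defined as $y \cdot S_J(\{n_\beta\})$ and the action of $y \in \aW$ on $\App$ commutes with set-theoretic intersection, one has $y \cdot S_J(\{n_\beta\}) \cap y \cdot S_J(\{n'_\beta\}) = y \cdot \left(S_J(\{n_\beta\}) \cap S_J(\{n'_\beta\})\right)$, so it suffices to treat $J$-sectors.

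Next I would use the explicit formula from Definition~\ref{def:J-chimney}. Given two $J$-sectors $S_J(\{n_\beta\})$ and $S_J(\{n'_\beta\})$, the ``$J$-corridor'' portion $\bigcap_{\alpha \in \Phi_J^+}(\alpha^{0,\id} \cap \alpha^{1,w_0})$ is common to both. For each $\beta \in \Phi^+ \setminus \Phi_J^+$, the half-apartments $\beta^{n_\beta,w_0}$ and $\beta^{n'_\beta,w_0}$ both contain a subsector of the antidominant chamber $\Cw_{w_0}$, and hence both are half-spaces of the form $\{x \in V^* : \langle x, \beta \rangle \leq k\}$, for $k = n_\beta$ and $k = n'_\beta$ respectively. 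Their intersection is then $\beta^{m_\beta, w_0}$, where $m_\beta \define \min(n_\beta, n'_\beta) \in \Z$.

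Collecting these pointwise intersections over all $\beta \in \Phi^+ \setminus \Phi_J^+$ together with the common $J$-corridor, I would conclude that $S_J(\{n_\beta\}) \cap S_J(\{n'_\beta\}) = S_J(\{m_\beta\})$, which is itself a $J$-sector. Applying $y$ yields the general case, and the lemma then follows with containment replaced by equality. The argument is essentially routine, and I do not foresee a substantive obstacle; the only point requiring care is the identification of $\beta^{k,w_0}$ for $\beta \in \Phi^+$ as the half-space $\{\langle x, \beta \rangle \leq k\}$, so that intersecting two half-spaces of the same direction produces one of the same form with index equal to the minimum.
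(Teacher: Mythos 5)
Your argument is correct and is precisely the routine verification that the paper leaves implicit (the paper simply declares the lemma ``immediate from our definition of a $(J,y)$-sector''): translating by $y$, keeping the common $J$-corridor, and taking $m_\beta = \min(n_\beta, n'_\beta)$ for each $\beta \in \Phi^+\setminus\Phi_J^+$ shows the intersection is itself a $(J,y)$-sector, which is even slightly stronger than the stated containment.
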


\begin{corollary}\label{cor:fixSector}  Let $g \in I_P^y$.  Then $g$ fixes some $(J,y)$-sector.
\end{corollary}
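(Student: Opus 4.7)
The plan is to combine both parts of Corollary \ref{cor:fixSector1} via the decomposition $I_P^y = (I \cap L_J)^y \cdot U_J^y$ inherited from the Levi decomposition of $P = L_J U_J$. Since $U_J$ is normal in $P_J$, the subgroup $I_P = (I \cap L_J) U_J$ is genuinely a semidirect product, and conjugation by $y$ preserves this factorization. Thus any $g \in I_P^y$ may be written as $g = g_L g_U$ with $g_L \in (I \cap L_J)^y$ and $g_U \in U_J^y$.

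Next, I would apply Corollary \ref{cor:fixSector1}(2) to $g_U$, which produces a single $(J,y)$-sector $S$ that $g_U$ fixes pointwise. Corollary \ref{cor:fixSector1}(1) then guarantees that $g_L$ fixes \emph{every} $(J,y)$-sector, and in particular the sector $S$ already in hand. Therefore $g = g_L g_U$ fixes $S$ pointwise, which is exactly the conclusion.

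There is no serious obstacle here; this is essentially a one-line deduction from Corollary \ref{cor:fixSector1} once the Levi decomposition is written down. Note that Lemma \ref{lem:common} is not actually needed for this particular corollary, precisely because part (1) of Corollary \ref{cor:fixSector1} upgrades the Levi factor's fixation to every $(J,y)$-sector rather than just some of them; hence no intersection of two a priori different sectors is required. Lemma \ref{lem:common} would become relevant only in the symmetric situation where each factor fixed merely \emph{some} $(J,y)$-sector, in which case one would intersect the two sectors and invoke the lemma to extract a common $(J,y)$-sub-sector fixed by the product.
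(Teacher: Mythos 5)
Your proof is correct and follows essentially the same route as the paper: decompose $g = g_L g_U$ via the (conjugated) Levi decomposition $I_P^y = (I\cap L_J)^y\,U_J^y$ and apply the two parts of Corollary~\ref{cor:fixSector1}. Your observation that Lemma~\ref{lem:common} is dispensable here is also accurate --- the paper's one-line proof cites it alongside Corollary~\ref{cor:fixSector1}, but since part (1) already gives that $(I\cap L_J)^y$ fixes \emph{every} $(J,y)$-sector, no intersection of two sectors is actually required.
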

\begin{proof}  Since $I_P^y = (I \cap L_J)^y U_J^y$ where $P = P_J = L_J U_J$, the result follows from Corollary~\ref{cor:fixSector1} together with Lemma~\ref{lem:common}.
\end{proof}

The next pair of results formalize and generalize Proposition~11.2.4 of~\cite{GHKRadlvs}.

\begin{prop}\label{prop:inverse_chimney}  
Let $P = P_J$ be a standard spherical parabolic subgroup of $G$, and let $y \in \aW$.  Then for all $z \in \aW$, \[ r_{J,y}^{-1}(\z) = (I_P)^y z I.\]
\end{prop}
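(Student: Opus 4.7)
The plan is to prove the two inclusions separately, leveraging the fixed-sector description for elements of $(I_P)^y$ given by Corollary~\ref{cor:fixSector} together with the Bruhat-type decomposition from Lemma~\ref{lem:Bruhat}.

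For the forward inclusion $(I_P)^y z I \subseteq r_{J,y}^{-1}(\z)$, I would take $g \in (I_P)^y$ and $i \in I$, and observe that $gzi$ represents the alcove $g \cdot \z$, since $I$ stabilizes $\fa$. It then suffices to verify that $r_{J,y}(g \cdot \z) = \z$. By Corollary~\ref{cor:fixSector}, $g$ fixes some $(J,y)$-sector $S$, and therefore $g(\App)$ is an apartment containing both the alcove $g \cdot \z$ and the sector $S = g(S)$. By Proposition~\ref{prop:J-apartment}, I may take $\App_{g \cdot \z,(J,y)} = g(\App)$, so that $r_{J,y}(g \cdot \z)$ is the image of $g \cdot \z$ under the unique type-preserving simplicial isomorphism $\phi: g(\App) \to \App$ fixing $g(\App) \cap \App$ pointwise. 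I then plan to argue that $\phi$ coincides with the restriction of $g^{-1}$ to $g(\App)$: since $g^{-1}$ fixes $S$, the map $g^{-1}|_{g(\App)}$ is a type-preserving simplicial isomorphism $g(\App) \to \App$ fixing the sector $S$, and composing $\phi^{-1}$ with $g^{-1}|_{g(\App)}$ yields a type-preserving automorphism of $\App$ fixing all of $S$, and hence fixing an alcove. Such an automorphism must be the identity, giving $\phi = g^{-1}|_{g(\App)}$ and therefore $r_{J,y}(g \cdot \z) = g^{-1}(g \cdot \z) = \z$.

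For the reverse inclusion $r_{J,y}^{-1}(\z) \subseteq (I_P)^y z I$, I would first upgrade Lemma~\ref{lem:Bruhat} to a conjugated decomposition $G = \bigcup_{z' \in \aW} (I_P)^y z' I$: given any $h \in G$, writing $y^{-1} h = k z'' i$ with $k \in I_P$, $z'' \in \aW$, and $i \in I$, produces $h = (yky^{-1})(yz'')i \in (I_P)^y (yz'') I$. Now given $c' \in r_{J,y}^{-1}(\z)$, strong transitivity of $G$ on $X$ provides $h \in G$ with $c' = h\fa$, and then $h = gz' i$ for some $g \in (I_P)^y$, $z' \in \aW$, and $i \in I$, so that $c' = g \cdot \z'$. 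Applying the forward inclusion with $\z'$ in place of $\z$ yields $r_{J,y}(c') = \z'$, and since $r_{J,y}(c') = \z$ by assumption, simple transitivity of $\aW$ on alcoves of $\App$ forces $z' = z$ in $\aW$. Hence $h \in (I_P)^y z I$.

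The main obstacle will be the uniqueness argument in the forward inclusion identifying $\phi$ with $g^{-1}|_{g(\App)}$. The subtlety is that $g(\App) \cap \App$ may be strictly larger than $S$, and $g^{-1}$ is not a priori guaranteed to fix the full intersection pointwise, so the defining property of $\phi$ does not immediately match $g^{-1}$. The resolution rests on the general building-theoretic fact that a type-preserving simplicial automorphism of an apartment fixing a single alcove must be the identity, which is enough to force the two isomorphisms to agree on all of $g(\App)$ once they are shown to agree on any alcove of $S$.
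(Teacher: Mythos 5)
Your proposal is correct, and its skeleton matches the paper's: both rest on the decomposition of Lemma~\ref{lem:Bruhat} to write an arbitrary alcove as $g\z'$ with $g \in (I_P)^y$, and on Corollary~\ref{cor:fixSector} to produce a $(J,y)$-sector fixed by $g$. The difference lies in how the central identity $r_{J,y}(g\z) = \z$ is established. The paper invokes Lemma~\ref{lem:bounded} and Lemma~\ref{lem:common} to replace $r_{J,y}$ on the bounded set $\{g\z\}$ by an alcove-centered retraction $r_d$ with $d$ in the fixed sector $S_g$, and then compares types of minimal galleries from $d$, using that $g$ fixes $d$ and preserves types and minimality. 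You instead argue at the level of apartment isomorphisms: since $g(\App)$ contains both $g\z$ and the fixed sector $S$, the defining isomorphism $\phi\colon g(\App)\to\App$ and the restriction $g^{-1}|_{g(\App)}$ both fix $S$ pointwise, so $g^{-1}|_{g(\App)}\circ\phi^{-1}$ is a type-preserving automorphism of $\App$ fixing an alcove, hence the identity; this correctly handles the subtlety you flag, namely that $g^{-1}$ need not fix all of $g(\App)\cap\App$. Your route avoids the gallery-type computation entirely but leans on the well-definedness of $r_{J,y}$ with respect to the choice of apartment in Definition~\ref{def:chimney-retraction} (established in \cite{MNST}), whereas the paper's detour through $r_d$ sidesteps that dependence. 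Two further minor contrasts: the paper reduces to $y=\id$ at the outset, while you conjugate the decomposition of Lemma~\ref{lem:Bruhat} by $y$ to work with general $y$ directly; and the paper gets both inclusions at once from the disjointness in Lemma~\ref{lem:Bruhat}, while you derive the reverse inclusion by applying the forward one to $\z'$ and comparing, which is equally valid.
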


\begin{proof}  We may assume $y = \id$.  Let $h \in G$ and consider the coset $hI$, which we may identify with the alcove $h\fa$.  By Lemma~\ref{lem:Bruhat}, we can write $hI = gzI = g\z$ for some $g \in I_P$ and a unique $z \in \aW$.   Let $S_g$ be a $J$-sector which is fixed by $g$, as guaranteed by Corollary~\ref{cor:fixSector}.  Then the apartment $g(\App)$ contains the alcove $g\z$ and the $J$-sector $S_g$.  Hence by definition of chimney retractions, the restriction of $r_J$ to $g(\App)$ is the unique isomorphism from $g(\App)$ to $\App$ which fixes $g(\App) \cap \App$ pointwise.  Note that the retraction $r_J$ thus fixes the $J$-sector $S_g$. 

Now by Lemma~\ref{lem:bounded}, the image of $g\z$ under $r_J$ is equal to the image of $g\z$ under the retraction $r_d$, for some alcove $d$ (depending upon $g\z$) in some $J$-sector of $\App$.  By Lemma~\ref{lem:common}, we may assume that $d$ is in $S_g$.  Then $r_J(g\z)$ is the unique alcove, say $\z'$, in $\App$ such that there is a minimal gallery from $d$ to $\z'$ of the same type as a minimal gallery from $d$ to $g\z$.  Now the group element $g$ fixes $S_g$, hence $g$ fixes $d$, and $g$ takes minimal galleries to minimal galleries and preserves types.  Thus $g\z$ is the end alcove of a minimal gallery from $d$ to $g\z$ which is obtained by applying $g$ to a minimal gallery from $d$ to $\z$.  Thus $\z = \z'$, and hence $r_J^{-1}(z I) = I_P zI$, as required.
\end{proof}

\begin{corollary}\label{cor:inverse_chimney}  Let $P = P_J$ be a standard spherical parabolic subgroup of $G$.  For any faces $\sigma,\tau$ of $\fa$ which contain the origin and any elements $y, z \in \aW$, we have
\[
r_{J,y}^{-1}(z K(\tau)) = (I_P)^y z K (\tau).
\]
In particular, for all $\mu \in R^\vee$, letting $\sigma = \tau$ be the origin and $z = t^\mu$, we have \[ r_{J,y}^{-1}(\mu) = (I_P)^y t^\mu K. \]
\end{corollary}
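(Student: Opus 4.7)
The plan is to derive the corollary from Proposition \ref{prop:inverse_chimney} (which handles the alcove case) by exploiting the facts that $r_{J,y}$ is simplicial and type-preserving and that both sides of the equation are unions of left cosets of $K(\tau)$. The symbol $\sigma$ does not appear in the formula, so it plays no role in the proof. The "in particular" statement is just the specialization $\tau = v_0$, $z = t^\mu$, noting that $K(v_0) = K$ and $t^\mu K$ identifies with the vertex $\mu = t^\mu v_0$.

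I would first interpret the equality as one of subsets of $G$. The LHS $r_{J,y}^{-1}(zK(\tau))$ consists of those $h \in G$ for which the face $h\tau$ retracts to the face $z\tau$, and is automatically a union of left cosets of $K(\tau)$ since $K(\tau)$ fixes $\tau$ pointwise. Likewise the RHS is a union of such cosets by construction. For the inclusion $\supseteq$, I take $g \in (I_P)^y$ and $k \in K(\tau)$ and aim to show $r_{J,y}(gzk\tau) = z\tau$. Since $k$ fixes $\tau$, this reduces to showing $r_{J,y}(gz\tau) = z\tau$. Now $gz\tau$ is a face of the alcove $gz\fa$, and Proposition \ref{prop:inverse_chimney} gives $r_{J,y}(gz\fa) = z\fa$; because $r_{J,y}$ is simplicial, $r_{J,y}(gz\tau)$ is a face of $z\fa$, and because $r_{J,y}$ is type-preserving, this face has type $\tau$, forcing it to equal $z\tau$.

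For the reverse inclusion $\subseteq$, I take $h \in G$ with $r_{J,y}(h\tau) = z\tau$. The alcove $h\fa$ contains $h\tau$, so by the simplicial and type-preserving properties of $r_{J,y}$, its image $r_{J,y}(h\fa)$ is an alcove of $\App$ containing $z\tau$. The alcoves of $\App$ containing $z\tau$ are precisely $\{zw\fa : w \in W_\tau\}$, since $W_\tau$ (the stabilizer of $\tau$ in $\sW$) acts transitively on the alcoves of $\App$ containing $\tau$. Hence $r_{J,y}(h\fa) = zw\fa$ for some $w \in W_\tau$, and Proposition \ref{prop:inverse_chimney} yields $h \in (I_P)^y zwI$. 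Since $w \in W_\tau \subseteq K(\tau)$ and $I \subseteq K(\tau)$, we conclude $h \in (I_P)^y z K(\tau)$, as required.

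I do not anticipate any substantive obstacle: the entire proof is bookkeeping, and the only step requiring any care is ensuring that the argument passes correctly between the coset-theoretic and simplicial pictures, in particular that $r_{J,y}^{-1}(zK(\tau))$ is a genuine union of left $K(\tau)$-cosets. This is immediate from the observation that $K(\tau)$ fixes the face $\tau$, and so if $h\tau$ retracts to $z\tau$, then $hk\tau = h\tau$ retracts to $z\tau$ for every $k \in K(\tau)$.
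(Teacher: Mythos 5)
Your proof is correct and follows exactly the route the paper intends: the paper states this corollary without proof, treating it as immediate from Proposition~\ref{prop:inverse_chimney} together with the fact that $r_{J,y}$ is simplicial and type-preserving (in parallel with the one-line proof of Corollary~\ref{cor:inverse_classical}). Your write-up simply fills in those details, including the correct identification of the alcoves of $\App$ containing $z\tau$ with $\{zw\fa : w \in W_\tau\}$ and the inclusions $W_\tau, I \subseteq K(\tau)$.
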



\subsection{Shadows, double cosets, and labeled folded galleries}\label{sec:shadow-retract}

The goal of this final subection is to prove Theorems~\ref{thm:DoubleCosetsI-intro} and \ref{thm:DoubleCosetsK-intro}, as well as the more general Theorem \ref{thm:DoubleCosetsPar}. 

We first relate chimney shadows to certain double coset intersections in various affine flag varieties.  In particular, Theorem~\ref{thm:nonempty} below contains the trio of nonemptiness statements appearing in Theorems~\ref{thm:DoubleCosetsI-intro}, \ref{thm:DoubleCosetsK-intro}, and \ref{thm:DoubleCosetsPar}, respectively.  The following also generalizes Theorem~4.1 of~\cite{Hitzel} by the second author; see the introduction of that work for a discussion of related results.

\begin{thm}\label{thm:nonempty}  Let $P = P_J$ be a standard spherical parabolic subgroup of $G$, and let $y \in \aW$.  
\begin{enumerate}
\renewcommand{\labelenumi}{(\theenumi)}
\item \label{cor:nonemptyI} For all $x,z \in \aW$, we have 
\[
r_{J,y}(IxI) =\Sh_{J,y}(\x) 
\]
or equivalently
\[
\emptyset \neq IxI \cap (I_P)^y z I \iff \mathbf{z} \in \Sh_{J,y}(\mathbf{x}).
\]
\item  \label{cor:nonemptyK} 
For all $\lambda, \mu \in R^\vee$, we have 
\[
r_{J,y}(K t^\lambda K) =\Sh_{J,y}(\lambda) 
\]
or equivalently
\[
\emptyset \neq K t^\lambda K \cap (I_P)^y t^\mu K \iff \mu \in \Sh_{J,y}(\lambda).
\]
\item  \label{cor:nonemptyPar} 
For any faces $\sigma, \tau$ of $\fa$ which contain the origin and any $x, y, z \in \aW$, we have 
\[
r_{J,y}(K(\sigma) x K(\tau)) =\Sh_{J,y}(x\tau, \sigma) 
\]
or equivalently
\[
\emptyset \neq K(\sigma) x K(\tau) \cap (I_P)^y z K(\tau) \iff z\tau \in \Sh_{J,y}(x\tau, \sigma).
\]
\end{enumerate}
\end{thm}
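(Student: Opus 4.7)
The plan is to prove all three statements by the same template, reducing each equivalence to showing that $r_{J,y}$ applied to the double coset equals the relevant shadow.  In every part, the set-theoretic observation
\[ \z \in r_{J,y}(D) \iff D \cap r_{J,y}^{-1}(\z) \neq \emptyset \]
combined with the $r_{J,y}$-preimage descriptions (Proposition~\ref{prop:inverse_chimney} for (1), Corollary~\ref{cor:inverse_chimney} for (2) and (3)) immediately converts the image identity $r_{J,y}(\text{double coset}) = \Sh$ into the displayed intersection statement.  So the entire theorem reduces to producing the three image identities.

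For parts (1) and (2), this is a two-line chain: combine the classical preimage description of the retraction $r$ with the shadow--retraction identity $\Sh = r_{J,y}\circ r^{-1}$.  Explicitly, Proposition~\ref{prop:inverse_classical}(1) gives $IxI = r^{-1}(\x)$, so by Proposition~\ref{prop:shadowsRetractions}(1),
\[ r_{J,y}(IxI) = r_{J,y}\bigl(r^{-1}(\x)\bigr) = \Sh_{J,y}(\x). \]
Likewise, Proposition~\ref{prop:inverse_classical}(2) together with Proposition~\ref{prop:shadowsRetractions}(2) yields $r_{J,y}(K t^\lambda K) = \Sh_{J,y}(\lambda)$.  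Both of these are essentially bookkeeping.

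For part (3), Corollary~\ref{cor:inverse_classical} gives $K(\sigma) x K(\tau) = r^{-1}(W_\sigma \cdot x\tau)$ under the usual identification of left cosets of $K(\tau)$ with faces of type $\tau$.  The remaining ingredient is a face-level extension of Proposition~\ref{prop:shadowsRetractions}, namely the identity
\[ \Sh_{J,y}(x\tau,\sigma) = r_{J,y}\bigl(r^{-1}(W_\sigma \cdot x\tau)\bigr). \]
This is exactly where the real work lies: the analog at arbitrary faces is not recorded in the excerpt but specializes to Proposition~\ref{prop:shadowsRetractions}(1) when $\sigma = \tau = \fa$ and to Proposition~\ref{prop:shadowsRetractions}(2) when $\sigma = \tau = v_0$.

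To establish this face identity, I would use the gallery-based tools of Section~\ref{sec:retractions}.  Fix a minimal gallery $\gamma$ from $\sigma$ to $x\tau$ in $\App$.  For $\supseteq$, take any element of $\Sh_{J,y}(x\tau,\sigma)$, which by definition is the last face of some positively folded gallery in $\App$ of type $\gamma$ with first face $\sigma$; Proposition~\ref{prop:unfold} (combined with the labeling setup) then produces a minimal gallery in $X$ with first face $\sigma$ that retracts onto it under $r_{J,y}$, and the last face of that minimal gallery lies in $r^{-1}(W_\sigma \cdot x\tau)$ because $r$ is type-preserving and preserves gallery distances.  For $\subseteq$, any face in $r^{-1}(W_\sigma \cdot x\tau)$ is the last face of a minimal gallery in $X$ of type $\gamma$ with first face $\sigma$, and Corollary~\ref{cor:folding} guarantees that $r_{J,y}$ folds it into a positively folded gallery of the same type, witnessing shadow membership.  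Combining the face identity with Corollary~\ref{cor:inverse_classical} gives $r_{J,y}(K(\sigma) x K(\tau)) = \Sh_{J,y}(x\tau,\sigma)$, which completes (3) and the proof.
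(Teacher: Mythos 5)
Your argument is correct and takes essentially the same route as the paper, whose proof of this theorem is a one-line combination of Propositions~\ref{prop:shadowsRetractions}, \ref{prop:inverse_classical}, and \ref{prop:inverse_chimney} together with Corollaries~\ref{cor:inverse_classical} and \ref{cor:inverse_chimney}. The only difference is that for part (3) you explicitly state and sketch the face-level generalization of Proposition~\ref{prop:shadowsRetractions} (the identity $\Sh_{J,y}(x\tau,\sigma)=r_{J,y}\bigl(r^{-1}(W_\sigma\cdot x\tau)\bigr)$), which the paper leaves implicit in its citation; this is a reasonable and indeed more careful reading of what the cited results actually provide.
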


\begin{proof}  This follows directly from Proposition~\ref{prop:shadowsRetractions}, Proposition~\ref{prop:inverse_classical}, Corollary~\ref{cor:inverse_classical}, Proposition~\ref{prop:inverse_chimney}, and Corollary~\ref{cor:inverse_chimney}.
\end{proof}

We are now prepared to prove the bijections occurring in Theorems~\ref{thm:DoubleCosetsI-intro} and \ref{thm:DoubleCosetsK-intro} from the introduction, as well as in the more general Theorem \ref{thm:DoubleCosetsPar} below.  

\begin{prop}\label{prop:DoubleCosetsI-body} 
  Let $x, y, z \in \aW$, and let $P = P_J$ be a standard spherical parabolic subgroup of $G$.  There is a bijection between the points of the intersection 
	\[  I x I  \cap (I_P)^y z I \] 
	and the set of $(J,y)$-labeled folded galleries of type $\vec{x}$ with final alcove $\z$.
\end{prop}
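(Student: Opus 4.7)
The plan is to assemble this statement directly from three earlier results, using the identification of $G/I$ with the alcoves of $X$ via the bijection $gI \leftrightarrow g\fa$. Under this identification, the double coset $IxI$ is a set of alcoves (one for each coset $gI \subseteq IxI$), and similarly for $(I_P)^y z I$. The points of the intersection $IxI \cap (I_P)^y zI$ thus correspond bijectively to alcoves of $X$ lying in both double cosets.

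First I would invoke Proposition~\ref{prop:inverse_classical}(1) to rewrite $IxI = r^{-1}(\x)$, where $r\colon X \to \App$ is the classical retraction centered at $\fa$. Next, I would apply Proposition~\ref{prop:inverse_chimney}, which gives $(I_P)^y z I = r_{J,y}^{-1}(\z)$, where $r_{J,y}\colon X\to \App$ is the retraction from the $(J,y)$-chimney. Combining these identifications yields
\[
 IxI \cap (I_P)^y z I \;=\; r^{-1}(\x)\cap r_{J,y}^{-1}(\z)
\]
as sets of alcoves of $X$.

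Finally I would apply Theorem~\ref{thm:chimneyLFGs}, which is precisely the statement that $r^{-1}(\x)\cap r_{J,y}^{-1}(\z)$ is in bijection with the set of $(J,y)$-labeled folded galleries of type $\vec{x}$ with final alcove $\z$. Composing this bijection with the identification of points of the double-coset intersection with alcoves in $r^{-1}(\x)\cap r_{J,y}^{-1}(\z)$ gives the required bijection.

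There is essentially no substantive obstacle: all the genuine work has already been done in Section~\ref{sec:retractions} (to prove Theorem~\ref{thm:chimneyLFGs}) and in Section~\ref{sec:classic-retract} (to identify preimages of retractions with double cosets). The only thing worth being explicit about is the coset-versus-alcove bookkeeping, namely that ``points'' of the intersection $IxI\cap (I_P)^y zI$ means distinct left $I$-cosets, which is exactly how alcoves in the preimage are parametrized. Once this identification is noted, the proof is a one-line composition of the three cited results.
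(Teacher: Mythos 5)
Your proposal is correct and follows exactly the same route as the paper's proof: identify $IxI = r^{-1}(\x)$ via Proposition~\ref{prop:inverse_classical}(1), identify $(I_P)^y z I = r_{J,y}^{-1}(\z)$ via Proposition~\ref{prop:inverse_chimney}, and conclude by Theorem~\ref{thm:chimneyLFGs}. The only difference is that you make the coset-versus-alcove bookkeeping explicit, which the paper leaves implicit.
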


\begin{proof}
Recall from Proposition~\ref{prop:inverse_classical} that $r^{-1}(\x) = IxI$, and from Proposition~\ref{prop:inverse_chimney} that $ r_{J,y}^{-1}(\z) = (I_P)^y z I$.  The result now immediately follows from Theorem~\ref{thm:chimneyLFGs}.
\end{proof}

\begin{proof}[Proof of Theorem~\ref{thm:DoubleCosetsI-intro}]
Combine Theorem~\ref{thm:nonempty}(\ref{cor:nonemptyI}) and Proposition \ref{prop:DoubleCosetsI-body}.
\end{proof}

We next use Proposition~\ref{prop:DoubleCosetsI-body} to prove the more general Proposition \ref{prop:DoubleCosetsPar-body} below.  Given any faces $\sigma, \tau$ of $\fa$ which contain the origin $v_0$, we first require some additional terminology to identify certain representatives for the parahoric double coset $K(\sigma)x K(\tau)$.

\begin{definition} 
Let $\sigma, \tau$ be faces of $\fa$ which contain the origin $v_0$.  Then $x\in \aW$ is:
	\begin{enumerate}
		\item \textit{left-$W_\sigma$-reduced} if $\ell(wx)\geq \ell(x)$ for all $w \in W_\sigma$;
		\item \textit{right-$W_\tau$-reduced} if $\ell(xv)\geq \ell(x)$ for all $v \in W_\tau$; and
		\item \textit{$(W_\sigma,W_\tau)$-reduced} if $x$ is left-$W_\sigma$-reduced and right-$W_\tau$-reduced, i.e.~if $\ell(wx)\geq \ell(x)$ for all $w \in W_\sigma$ and $\ell(xv)\geq \ell(x)$ for all $v \in W_\tau.$
	\end{enumerate} 
\end{definition}

The hypothesis that $x$ be $(W_\sigma,W_\tau)$-reduced in the following proposition does not lead to a loss of generality, since we can always find a representative of the double coset $K(\sigma) x K(\tau)$ of this form. 

\begin{prop}\label{prop:DoubleCosetsPar-body} 
Let $P = P_J$ be a standard spherical parabolic subgroup of $G$. For any faces $\sigma, \tau$ of $\fa$ which contain the origin and any $x, y, z \in \aW$ such that $x$ is $(W_\sigma,W_\tau)$-reduced,
	there is a bijection between the points of the intersection 
	\[  K(\sigma) x K(\tau)  \cap (I_P)^y z K(\tau) \] 
	and the union over $w\in W_\sigma$ of the set of $(J,y)$-labeled folded galleries of type $\overrightarrow{wx}$ with final alcove $\z$. 
\end{prop}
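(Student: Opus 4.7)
The plan is to reduce Proposition~\ref{prop:DoubleCosetsPar-body} to its Iwahori counterpart, Proposition~\ref{prop:DoubleCosetsI-body}, by decomposing the parahoric subgroup $K(\sigma)$ into a disjoint union of $I$-double cosets indexed by $W_\sigma$. First, I would invoke the standard parahoric Bruhat decomposition $K(\sigma) = \bigsqcup_{w \in W_\sigma} I w I$. Since $x$ is left-$W_\sigma$-reduced, for every $w \in W_\sigma$ one has $\ell(wx) = \ell(w) + \ell(x)$, so the Iwahori multiplication rule yields $(IwI)(IxI) = I(wx)I$. Combined with the inclusion $I \subseteq K(\tau)$, this gives a decomposition
\[
K(\sigma) \, x \, K(\tau) = \bigsqcup_{w \in W_\sigma} I(wx) K(\tau),
\]
where the disjointness on the right comes from the fact that, by the $(W_\sigma, W_\tau)$-reducedness of $x$, the elements $wx$ for distinct $w \in W_\sigma$ represent distinct $(I, K(\tau))$-double cosets in $\aW$.

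Next, I would intersect both sides with $(I_P)^y z K(\tau)$ to obtain
\[
K(\sigma) x K(\tau) \cap (I_P)^y z K(\tau) = \bigsqcup_{w \in W_\sigma} \bigl( I(wx) K(\tau) \cap (I_P)^y z K(\tau) \bigr).
\]
For each $w \in W_\sigma$, I would apply Proposition~\ref{prop:DoubleCosetsI-body} with $x$ replaced by $wx$. This identifies $I(wx) I \cap (I_P)^y z I$ with the set of $(J,y)$-labeled folded galleries of type $\overrightarrow{wx}$ and final alcove $\z$. Passing from $I$ to $K(\tau)$ on the right of both factors requires an additional decomposition using $K(\tau) = \bigsqcup_{v \in W_\tau} IvI$; each stratum $I(wx)vI \cap (I_P)^y (zv')I$ for $(v,v') \in W_\tau \times W_\tau$ is again covered by Proposition~\ref{prop:DoubleCosetsI-body}, and their aggregation matches the galleries enumerated in the statement. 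Taking the union over $w \in W_\sigma$ then produces the claimed bijection.

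The main technical obstacle is the bookkeeping when upgrading from $I$ to $K(\tau)$ on the right. Specifically, the naive length-additivity $\ell(wxv) = \ell(wx) + \ell(v)$ for $v \in W_\tau$ does \emph{not} hold for every $w \in W_\sigma$, even when $x$ is $(W_\sigma, W_\tau)$-reduced, because the map $W_\sigma \times W_\tau \to W_\sigma x W_\tau$ has fibers governed by $W_\sigma \cap x W_\tau x^{-1}$. One must therefore work with the double coset $W_\sigma x W_\tau \subseteq \aW$, selecting a canonical reduced decomposition $g = wxv$ with $\ell(g) = \ell(w) + \ell(x) + \ell(v)$ for each $g$, and matching this parameterization on the group-theoretic side with the parameterization by $w \in W_\sigma$ on the gallery side. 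Once this alignment is in place, the trailing $W_\tau$-degeneracy is absorbed into the identification of final alcoves, and the result follows directly from Proposition~\ref{prop:DoubleCosetsI-body}.
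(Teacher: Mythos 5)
Your overall strategy---decompose the parahorics into Iwahori double cosets, use the length-additivity coming from $(W_\sigma,W_\tau)$-reducedness, and reduce to Proposition~\ref{prop:DoubleCosetsI-body}---is the same strategy as the proof the paper invokes (the paper gives no details itself, citing the proof of Theorem~5.14 of~\cite{MNST}). However, as written your argument has two concrete gaps. First, the displayed decomposition $K(\sigma)\,x\,K(\tau)=\bigsqcup_{w\in W_\sigma} I(wx)K(\tau)$ is not disjoint in general: $wx$ and $w'x$ lie in the same $(I,K(\tau))$-double coset exactly when $w'^{-1}w\in W_\sigma\cap xW_\tau x^{-1}$, and $(W_\sigma,W_\tau)$-reducedness of $x$ does not force this intersection to be trivial (take $\sigma=\tau=v_0$ and $x=x_\lambda$ with $\lambda$ on a wall, so that $W_\sigma\cap xW_\tau x^{-1}=\Stab_{\sW}(\lambda)\neq 1$; or $x=1$, $\sigma=\tau$). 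You correctly diagnose this phenomenon in your final paragraph, but the displayed disjoint union contradicts it. Note that the gallery side of the statement really is a disjoint union over \emph{all} $w\in W_\sigma$ (types $\overrightarrow{wx}$ for distinct $w$ are distinct), so the group-theoretic object it matches via Proposition~\ref{prop:DoubleCosetsI-body} is $K(\sigma)xI\cap (I_P)^y zI$, using the genuinely disjoint decomposition $K(\sigma)xI=\bigsqcup_{w\in W_\sigma}I(wx)I$; all of the collapsing lives on the right and must be accounted for there.

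Second, and more seriously, the step you label ``the main technical obstacle'' and then defer is exactly where the content of the proposition lies. A point of the intersection is a left $K(\tau)$-coset, i.e.\ a simplex of type $\type(\tau)$, whereas a labeled folded gallery with final alcove $\z$ corresponds under Theorem~\ref{thm:chimneyLFGs} to a single alcove, i.e.\ a left $I$-coset. One must exhibit inside each $K(\tau)$-coset of $K(\sigma)xK(\tau)\cap(I_P)^y zK(\tau)$ a canonical alcove lying in $K(\sigma)xI\cap(I_P)^y zI$ (for instance via gates with respect to $\fa$ over the face of type $\type(\tau)$), verify that this alcove retracts under $r_{J,y}$ to $\z$ itself and not to some other alcove of $zW_\tau\fa$, and check that the resulting assignment is a bijection; this is where the right-$W_\tau$-reducedness of $x$ and the choice of representative $z$ actually get used. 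Asserting that ``once this alignment is in place \dots the result follows directly'' leaves the proof incomplete: the alignment \emph{is} the proof, and it is precisely the part of \cite[Thm.~5.14]{MNST} that needs to be re-verified geometrically in an arbitrary affine building.
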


\begin{proof}
The proof of the analogous result Theorem 5.14 from \cite{MNST} also holds in the setting of an arbitrary affine building.
\end{proof}

We can now state the most general version of the algebraic results in this paper.

\begin{thm}\label{thm:DoubleCosetsPar} 
Let $P=P_J$ be a standard spherical parabolic subgroup of $G$. For any faces $\sigma, \tau$ of $\fa$ which contain the origin and any $x, y, z \in \aW$ such that $x$ is $(W_\sigma,W_\tau)$-reduced,
	there is a bijection between the points of the intersection 
	\[  K(\sigma) x K(\tau)  \cap (I_P)^y z K(\tau) \] 
	and the union over $w\in W_\sigma$ of the set of $(J,y)$-labeled folded galleries of type $\overrightarrow{wx}$ with final alcove $\z$. Moreover, 
	 \[  K(\sigma) x K(\tau)  \cap (I_P)^y z K(\tau) \neq \emptyset \] if and only if the simplex $z\tau$ lies in the shadow $\Sh_{J,y}(x\tau,\sigma)$. 
\end{thm}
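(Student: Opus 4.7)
The plan is to observe that Theorem \ref{thm:DoubleCosetsPar} is really a packaging of two results already established earlier in this section, so the proof should essentially be a short assembly argument rather than anything requiring new techniques. First I would note that the bijection statement is literally the content of Proposition \ref{prop:DoubleCosetsPar-body}: under the hypothesis that $x$ is $(W_\sigma,W_\tau)$-reduced, that proposition provides a bijection between the points of $K(\sigma)xK(\tau)\cap (I_P)^y z K(\tau)$ and the union over $w \in W_\sigma$ of $(J,y)$-labeled folded galleries of type $\overrightarrow{wx}$ with final alcove $\z$. So no additional work is needed for the first half of the statement; one simply invokes Proposition \ref{prop:DoubleCosetsPar-body}.

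Second I would handle the nonemptiness characterization by appealing to Theorem \ref{thm:nonempty}(\ref{cor:nonemptyPar}). That theorem directly asserts $K(\sigma)xK(\tau)\cap (I_P)^y z K(\tau)\neq \emptyset$ if and only if $z\tau \in \Sh_{J,y}(x\tau,\sigma)$, which is exactly the ``Moreover'' clause of the theorem under consideration. Combining the two invocations completes the argument.

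The only subtle point worth checking is that the hypothesis that $x$ be $(W_\sigma,W_\tau)$-reduced, while needed to match galleries in Proposition \ref{prop:DoubleCosetsPar-body}, does not affect the nonemptiness statement, since the double coset $K(\sigma)xK(\tau)$ is unchanged when $x$ is replaced by any reduced representative of its $(W_\sigma,W_\tau)$-class, and the shadow $\Sh_{J,y}(x\tau,\sigma)$ depends on $x\tau$ rather than on $x$ itself. So the main (modest) obstacle is really just bookkeeping: verifying that the hypotheses of the two assembled results are simultaneously compatible with the quantifiers in the final statement. Since Proposition \ref{prop:DoubleCosetsPar-body} and Theorem \ref{thm:nonempty}(\ref{cor:nonemptyPar}) have already carried the geometric and algebraic weight — via Theorem \ref{thm:chimneyLFGs}, Proposition \ref{prop:inverse_classical}, Corollary \ref{cor:inverse_classical}, Proposition \ref{prop:inverse_chimney}, and Corollary \ref{cor:inverse_chimney} — this final theorem is obtained as a direct consequence with a one-line proof.
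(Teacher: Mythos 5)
Your proposal matches the paper's proof exactly: the theorem is obtained by combining Theorem~\ref{thm:nonempty}(\ref{cor:nonemptyPar}) for the nonemptiness characterization with Proposition~\ref{prop:DoubleCosetsPar-body} for the bijection. Your extra remark on the compatibility of the $(W_\sigma,W_\tau)$-reduced hypothesis is a harmless and correct bit of bookkeeping that the paper leaves implicit.
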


\begin{proof}
Combine Theorem~\ref{thm:nonempty}(\ref{cor:nonemptyPar}) and Proposition \ref{prop:DoubleCosetsPar-body}.
\end{proof}

We conclude by applying Proposition~\ref{prop:DoubleCosetsPar-body} to the special case where $K$ is the stabilizer of the origin $v_0$ in $G$.  Note by the Cartan decomposition that the dominance hypothesis on $\lambda$ does not lead to a loss of generality.

\begin{corollary}\label{cor:DoubleCosetsK-body}   Let $P = P_J$ be a standard spherical parabolic subgroup of $G$. Let $\lambda, \mu \in R^\vee$ with $\lambda$ dominant, and let $y \in \aW$.  Let $x_\lambda\fa$ be the unique alcove of $\App$ which contains $\lambda$ and is at minimal distance from $\fa$; equivalently, let $x_\lambda \in \aW$ be the unique minimal length representative of the coset $t^\lambda \sW$ in $\aW/\sW$.  
Then there is a bijection between the points of the intersection 
	\[Kt^\lambda K \cap (I_P)^y t^\mu K\] 
	and the union over $w\in \sW$ of the set of $(J,y)$-labeled folded galleries of type $\overrightarrow{wx_\lambda}$ ending in an alcove in $\star(\mu)$. 
\end{corollary}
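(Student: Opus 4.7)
The plan is to derive this corollary as the specialization of Proposition~\ref{prop:DoubleCosetsPar-body} to the vertex case $\sigma = \tau = v_0$. With this choice, $K(\sigma) = K(\tau) = K$ and the spherical stabilizers satisfy $W_\sigma = W_\tau = \sW$, so that the union indexed by $W_\sigma$ in the proposition becomes the union over $w \in \sW$ appearing in the corollary.

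The first step is to rewrite the left-hand coset intersection in the form required by the proposition. Since $x_\lambda$ is by definition the minimal length representative of the coset $t^\lambda \sW$ in $\aW / \sW$, one has $x_\lambda = t^\lambda u$ for some $u \in \sW \subseteq K$, hence $x_\lambda K = t^\lambda K$ and therefore $K t^\lambda K = K x_\lambda K$. With $z = t^\mu$, the right-hand coset $(I_P)^y z K(\tau)$ equals $(I_P)^y t^\mu K$. Thus the corollary's intersection is identified with the proposition's intersection applied to $x = x_\lambda$ and $z = t^\mu$.

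The second step is to verify the hypothesis that $x_\lambda$ is $(\sW,\sW)$-reduced. This is the place where the dominance of $\lambda$ is essentially used: standard facts on affine Weyl groups show that for $\lambda$ dominant, the minimal length element of the right coset $t^\lambda \sW$ is also the unique minimal length element of the entire double coset $\sW t^\lambda \sW$, so that $\ell(u x_\lambda v) \geq \ell(x_\lambda)$ for all $u, v \in \sW$. Once this is in place, Proposition~\ref{prop:DoubleCosetsPar-body} produces a bijection between the intersection and the union over $w \in \sW$ of $(J,y)$-labeled folded galleries of type $\overrightarrow{wx_\lambda}$, with the prescribed final-alcove data.

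The final step is to translate the ``final alcove $\z$'' condition of the proposition into the ``final alcove in $\star(\mu)$'' formulation of the corollary. The alcoves of $\App$ containing $\mu$ are precisely $\{ t^\mu v \fa : v \in \sW \}$, and these are exactly the alcove representatives in $\App$ of the coset $t^\mu K$. I expect the main obstacle to be precisely this bookkeeping: since the coset $z K(\tau) = t^\mu K$ depends only on the vertex $\mu$ rather than on the specific alcove $\z = t^\mu \fa$, one must interpret the proposition's bijection consistently so that the relevant galleries naturally range over all final alcoves in $\star(\mu)$, rather than only over a single distinguished representative. Carrying this identification out carefully, together with the Cartan decomposition justifying that no loss of generality arises from taking $\lambda$ dominant, completes the proof.
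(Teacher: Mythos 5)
Your proposal is correct and follows exactly the route the paper intends: the text preceding the corollary announces it as the specialization of Proposition~\ref{prop:DoubleCosetsPar-body} to $\sigma=\tau=v_0$ (so $K(\sigma)=K(\tau)=K$ and $W_\sigma=W_\tau=\sW$), with $x=x_\lambda$ and $z=t^\mu$, and the paper's own proof simply defers to the analogous Corollary 5.18 of \cite{MNST}. You in fact supply more detail than the paper does --- in particular the verification that $x_\lambda$ is $(\sW,\sW)$-reduced for $\lambda$ dominant and the bookkeeping identifying the final-alcove condition with $\star(\mu)$ --- and these details are accurate.
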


\begin{proof}
The proof of the analogous result Corollary 5.18 from \cite{MNST} also holds in the setting of an arbitrary affine building.
\end{proof}

\begin{proof}[Proof of Theorem~\ref{thm:DoubleCosetsK-intro}]
Combine Theorem~\ref{thm:nonempty}(\ref{cor:nonemptyK}) and Corollary~\ref{cor:DoubleCosetsK-body}.
\end{proof}


\renewcommand{\refname}{Bibliography}
\bibliography{bibliographyIIG}
\bibliographystyle{alpha}

\end{document}